\newcommand{\overbow}[1]{
   \tikz [baseline = (N.base), every node/.style={}] {
      \node [inner sep = 0pt] (N) {$#1$};
      \draw [line width = 0.4pt] plot [smooth, tension=1.3] coordinates {
         ($(N.north west) + (0.1ex,0)$)
         ($(N.north)      + (0,0.5ex)$)
         ($(N.north east) + (0,0)$)
      };
   }
}
\newcommand{\Xd}{X_\Delta}
\newcommand{\Li}{\mathcal{L}}
\newcommand{\Lid}{\Li_\Delta}
\newcommand{\LL}{\lvert \Li \rvert}
\newcommand{\LLd}{\lvert \Lid \rvert}
\newcommand{\C}{\mathbb{C}}
\newcommand{\R}{\mathbb{R}}
\newcommand{\Z}{\mathbb{Z}}
\newcommand{\cp}[1]{\C P^{#1}}
\newcommand{\rp}[1]{\R P^{#1}}
\newcommand{\length}{\ell_\Z}
\newcommand{\aC}{\mathscr{C}}
\newcommand{\w}{w}
\newcommand{\rc}{C}
\newcommand{\nc}[1]{\mathscr{C}_{\geqslant #1}}
\newcommand{\Xb}{X^\bullet}
\newcommand{\cP}{\mathcal{P}}
\newcommand{\Cb}{C^\bullet}
\definecolor{bluegray}{rgb}{0.4, 0.6, 0.8}
\renewcommand{\emph}[1]{{{\color{bluegray} #1}}}
\newcommand{\A}{\mathcal{A}}
\newcommand{\ttor}{(\C^\ast)^2}
\newcommand{\sv}[1]{V_{#1}}
\newcommand{\edge}{\epsilon}
\newcommand{\D}{\mathcal D}
\newcommand{\mD}{\mathcal{D}}
\newcommand{\mR}{\mathcal{R}}
\newcommand{\mP}{\mathcal{P}}
\newcommand{\obs}{\varPsi}
\newcommand{\itrz}{\itr_\Z}
\newcommand{\mud}{\mu_\Delta}
\newcommand{\mudd}{\mu_{\Delta,1}}
\newcommand{\mul}{\mu_\Li}
\DeclareMathOperator{\ord}{\textit{ord}}
\DeclareMathOperator{\im}{im}
\DeclareMathOperator{\id}{id}
\DeclareMathOperator{\itr}{int}
\DeclareMathOperator{\conv}{conv}
\DeclareMathOperator{\PGL}{PGL}
\DeclareMathOperator{\Aut}{Aut}
\DeclareMathOperator{\dist}{d}
\DeclareMathOperator{\Sym}{Sym}
\DeclareMathOperator{\Hom}{Hom}
\DeclareMathOperator{\pic}{Pic}
\DeclareMathOperator{\conj}{conj}
\DeclareMathOperator{\Hess}{Hess}
\DeclareMathOperator{\area}{area}
\newtheorem{Proposition}{Proposition}[section]
\newtheorem{Definition}[Proposition]{Definition}
\newtheorem{Lemma}[Proposition]{Lemma}
\newtheorem{Remark}[Proposition]{Remark}
\newtheorem{Theorem}{Theorem}
\newtheorem{Conjecture}[Theorem]{Conjecture}
\newtheorem{Corollary}[Proposition]{Corollary}
\newtheorem*{ack}{Acknowledgement}
\date{}
\begin{document}

\title{Monodromy of rational curves on toric surfaces}
\author{Lionel Lang}

\maketitle

\footnote{{\bf Keywords:}  monodromy, toric surfaces, rational curves, Severi varieties.} 
\footnote{{\bf MSC-2010 Classification:} 14D05, 14Q05.}

\abstract{For an ample line bundle $\Li$ on a complete toric surface $X$, we consider the subset $\sv{\Li} \subset \LL$ of irreducible, nodal, rational curves contained in the smooth locus of $X$. We study the monodromy map from the fundamental group of $\sv{\Li}$ to the permutation group on the set of nodes of a reference curve $C \in \sv{\Li}$.
We identify a certain obstruction map $\obs_{X}$ defined on the set of nodes of $C$ and show that the image of the monodromy is exactly the group of deck transformations of $\obs_{X}$, provided that $\Li$ is sufficiently big (in the sense we make precise below). Along the way, we construct a handy tool to compute the image of the monodromy for any pair $(X,\Li)$. Eventually, we present a family of pairs $(X, \Li)$ with small $\Li$ and for which the image of the monodromy is strictly smaller than expected.
}

\section{Introduction}\label{sec:intro}

For an ample line bundle $\Li$ on a complete toric surface $X$, we consider the variety $\sv{\Li}\subset\LL$ of irreducible nodal rational curves contained in the smooth locus of $X$. For a general curve $C \in \sv{\Li}$, any loop in $\sv{\Li}$ based at $C$ induces a permutation on the set of nodes of the curve $C$. This action is recorded by the monodromy map 
$$\mul : \pi_1\big( \sv{\Li}, C \big) \rightarrow \Aut \big( \big\lbrace \text{nodes of } C \big\rbrace \big).$$

The monodromy map $\mul$ plays an important role in different contexts. First, the image of $\mul$ can be thought of as a first approximation of the fundamental group $\pi_1\big( \sv{\Li}, C \big)$. From this perspective, the study of the map $\mul$ is in line with the work \cite{DL} on fundamental groups of complement to discriminant varieties, see \cite{CL1}, \cite{CL2} and \cite{Salt17} for recent developments. The study of $\mul$ also contributes to the Galois theory of enumerative problems in algebraic geometry. We refer to \cite{H79} and \cite{Vak06} for foundations and to \cite{SW}, \cite{E18} and \cite{EL} for recent developements. At last, the works \cite{Ha}, \cite{Tyom} and \cite{Tyom14} illustrate how the Severi Problem on toric surfaces can be decomposed into the study of $\mul$ and a separate problem in deformation theory.

In relation to the latter problem,  J. Harris proved in \cite{Ha} that the image of $\mul$ is the full permutation group when $(X, \Li)=\big(\cp{2},\mathcal{O}(d)\big)$. In \cite{Tyom}, I. Tyomkin proved the surjectivity of a slightly different monodromy map in the case of Hirzebruch surfaces. Apart from these two cases, it seems that  the image of $\mul$ is not known for general toric surfaces. 

In this paper, we aim to determine the image of $\mul$ for pairs $(X,\Li)$ as general as possible.
To start with, we investigate possible obstructions to permute the nodes of a curve $C\in \sv{\Li}$.
One obstruction is of purely topological nature and can be described as follows. Let $\Xb \simeq \ttor$ be the $2$-dimensional torus orbit in $X$ and denote $\Cb:=C\cap \Xb$ for any $C\in \sv{\Li}$. Then, the inclusion $\Cb \hookrightarrow \Xb$ induces a map $\pi:H_1(\Cb,\Z) \rightarrow H_1(\Xb,\Z)$. If we denote by $\cP \subset H_1(\Cb,\Z)$ the subspace generated by the punctures of $\Cb$, any node $\nu$ of $C$ defines a class $[\gamma_\nu] \in H_1(\Cb,\Z) \big/ \cP$ unique up to orientation. This is done by choosing a simple closed curve $\gamma_\nu \subset \Cb$ whose preimage in the normalisation of $C$ is a path joining the two branches of $\nu$. In turn, this defines the obstruction map 
$$\obs_X : \{\text{nodes of } C\} \rightarrow \big(H_1(\Xb,\Z)\big/ \pi(\cP)\big) \big/ \pm \id$$
sending a node $\nu$ to the projection of $[\gamma_\nu]$. The target space  of $\obs_X$ is essentially independent of $C$ and is determined by $X$, see Section \ref{sec:obs}. We will therefore denote it by $Q_X$. As a consequence, a loop in $\pi_1\big( \sv{\Li}, C \big)$ has to preserve the decoration of the nodes of $C$ by elements in $Q_X$.

Our main theorem states that the map $\obs_{X}$ is the only general obstruction. In order to formulate this result, we define for any constant $\ell\geqslant 1$, the set $\nc{\ell}(X)$ of ample line bundles $\Li$ on $X$ satisfying $\deg\big(\Li_{\vert C}\big)\geqslant \ell$ for any curve $C\subset X$.

\begin{Theorem}\label{thm:main}
For any  complete toric surface $X$, there exists a constant $\ell := \ell(X) \geqslant 1$ such that for any line bundle $\Li\in \nc{\ell}(X)$, the image of the monodromy map $\mul$  is the group of deck transformations of the map $\obs_X$.
\end{Theorem}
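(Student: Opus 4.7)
The plan is to treat the two directions of the equality separately. For the containment of $\im(\mul)$ in the group of deck transformations, I would argue that along any loop in $\sv{\Li}$ based at $C$, the embedded curve $\Cb \subset \Xb$ varies continuously, and with it the class $[\gamma_\nu] \in H_1(\Cb,\Z)/\pi(\cP)$ attached to each node $\nu$. Since $Q_X$ is discrete and the classes depend only on the isotopy type of the inclusion $\Cb \hookrightarrow \Xb$, the induced permutation must preserve the fibers of $\obs_X$, i.e.\ it is a deck transformation. This direction is essentially formal, and carries over from the very definition of $\obs_X$.

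For the surjectivity onto deck transformations, my strategy is to degenerate via phase-tropical curves. For $\Li \in \nc{\ell}(X)$ with Newton polygon $\Delta$, choose a one-parameter family that degenerates a generic $C \in \sv{\Li}$ to a phase-tropical limit whose underlying tropical curve $\Gamma$ is $\Delta$-generic. Under this degeneration, the nodes of $C$ are in bijection with the bounded cycles of $\Gamma$, and the obstruction class $\obs_X(\nu)$ of a node $\nu$ is the image in $H_1(\Xb,\Z)/\pi(\cP) \pmod{\pm\id}$ of the homology class of the bounded cycle of $\Gamma$ attached to $\nu$. In this picture, the computation of $\im(\mul)$ becomes combinatorial: loops in $\sv{\Li}$ based at $C$ are produced by paths in the space of $\Delta$-generic tropical curves that return to the combinatorial type of $\Gamma$ up to a permutation of its bounded cycles.

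This combinatorial description should be the tool advertised in the abstract: each wall-crossing within the $\Delta$-generic locus (an edge flip, a vertex bifurcation, or a slide that crosses a codimension-one stratum of non-genericity) lifts to a well-defined permutation of the bounded cycles of $\Gamma$, and $\im(\mul)$ is generated by these local contributions. I would then isolate a small collection of prototype surgeries that generate, on each fiber of $\obs_X$, the full symmetric group on the cycles lying above it. A natural candidate is the tropical analog of Harris's tacnodal degeneration: a local move producing a transposition of two cycles of the same homology class while fixing all others.

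The hardest step, and the one that dictates the choice of $\ell(X)$, is showing that these prototype surgeries are always available when $\Li \in \nc{\ell}(X)$. Concretely, given any two bounded cycles of $\Gamma$ with the same class in $Q_X$, one must exhibit a path of elementary tropical surgeries, staying inside $\Delta$, that interchanges them without altering the remaining cycles. The constant $\ell(X)$ is then chosen to guarantee this combinatorial connectivity for every $Q_X$-class and depends only on the fan of $X$. The main obstacle is precisely this largeness estimate: it fails for small $\Li$, which is consistent with the counterexamples announced at the end of the abstract, and controlling it uniformly in $\Li$ is where most of the work should lie.
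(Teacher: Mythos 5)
Your treatment of the containment direction matches the paper's Lemma~\ref{lem:obsmap}: continuity of $\obs_{X,C_\theta}$ along any loop in $\sv{\Li}$ forces the monodromy to preserve the fibers of $\obs_X$. Your philosophy for the converse — degenerate to a combinatorial limit and assemble the monodromy from local surgeries — is also in the same vein as the paper, which uses rational simple Harnack curves to identify the nodes of $C$ with $\itrz(\Delta)$ (via Mikhalkin's order map, Section~\ref{sec:harnack}) and a patchworking degeneration (Section~\ref{sec:patch}) to import monodromy from sub-triangles $T\subset\Delta$.

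However, your outline has a genuine gap at exactly the step you flag as hardest, and it is not merely a matter of effort: you do not identify the prototype surgeries, and you give no mechanism to verify that a tropical wall-crossing produces the permutation you want. This is the technical core of the theorem. The paper solves it by (i) restricting to a triangle $T$, where $X_T$ is a weighted projective plane and a rational curve admits the explicit parametrisation \eqref{eq:param3}; (ii) writing polynomials $P_{k,\{a\}}$ as in \eqref{eq:pol} whose roots parametrise precisely the nodes in the fiber $\obs_{\Delta,1}^{-1}(k)$ (Proposition~\ref{prop:description}); (iii) showing the discriminants $\widetilde{\mD}_k$ are pairwise distinct irreducible hypersurfaces (Proposition~\ref{prop:disc}); and (iv) invoking \cite[Corollary 4]{EL2} to conclude that the monodromy acts on these roots as the full product of symmetric groups (Theorem~\ref{thm:triangle}). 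None of these ingredients has a named replacement in your plan: a ``tropical tacnodal degeneration'' is not an established tool, and it is not a priori true that moves in the space of $\Delta$-generic tropical curves realise enough of $\pi_1(\sv{\Li})$ (tropical monodromy is generally a strict subquotient of algebraic monodromy, and establishing the correspondence is itself nontrivial). Likewise, ``choose $\ell(X)$ to guarantee combinatorial connectivity'' is left at the level of a wish; the paper has to introduce the wedge formalism of Definition~\ref{def:wedge}, verify hypotheses \textbf{(A)}--\textbf{(C)}, prove the pushout Lemma~\ref{lem:deckpushout}, and run the propagation argument of Propositions~\ref{prop:1}--\ref{prop:3}, culminating in the explicit constant of Proposition~\ref{prop:l}. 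In short, your first direction is fine and your framework is sensible, but the proposal omits the two load-bearing pieces of the argument: the computation of the monodromy for triangles and the combinatorial generation argument that assembles the global group from the pieces.
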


Theorem \ref{thm:main} is a consequence of Theorems \ref{thm:patch} and \ref{thm:combinatorics} proven in Sections \ref{sec:patch} and  \ref{sec:combinatorics}. We provide explicit constants $\ell(X)$ in Proposition \ref{prop:l}. However, we do not address the question of minimality of these constants in this paper. The reader interested in a particular pair $(X, \Li)$ not satisfying the hypotheses of Proposition \ref{prop:l} is referred to Theorem \ref{thm:patch}. 

Observe that the obstruction map $\obs_{X}$ already appeared implicitly in \cite[\S 4.1]{Tyom14}. We will see in Section \ref{sec:obs} that the target $Q_X$ of $\obs_{X}$ is reduced to a point whenever the toric surface $X$ is smooth. 
In particular, the map $\obs_{X}$ does not provide any obstruction in that case.

\begin{Conjecture}
For any smooth complete toric surface $X$, the image of the monodromy map $\mul$  is the full permutation group on the set of nodes of $C$.
\end{Conjecture}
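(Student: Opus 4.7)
The conjecture holds for $\Li \in \nc{\ell(X)}(X)$ as an immediate consequence of Theorem \ref{thm:main}: since $X$ is smooth, the target $Q_X$ of the obstruction map $\obs_X$ consists of a single point (as remarked just after Theorem \ref{thm:main}), so the group of deck transformations of $\obs_X$ is the full symmetric group on the nodes of $C$. The remaining task is to cover the ample line bundles that lie outside $\nc{\ell(X)}(X)$.

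For such small line bundles, my plan is to combine a degeneration-and-smoothing argument with the explicit tool supplied by Theorem \ref{thm:patch}. Fix an auxiliary ample $M$ with $\Li \otimes M \in \nc{\ell(X)}(X)$ (for instance $M = \Li^N$ for $N$ large) and pick generic $C_\Li \in \sv{\Li}$ and $C_M \in \sv{M}$ meeting transversally in smooth points of $X$. Their union lies in the boundary of $\sv{\Li \otimes M}$, and a local smoothing produces a curve $C' \in \sv{\Li \otimes M}$ whose nodes decompose into nodes of $C_\Li$, nodes of $C_M$, and new intersection nodes. Loops in $\sv{\Li}$ based at $C_\Li$ lift to loops in $\sv{\Li \otimes M}$ based at $C'$ that fix the last two classes of nodes and act on the nodes of $C_\Li$ via $\mu_\Li$; the image of $\mu_\Li$ is therefore contained in the corresponding stabilizer inside the image of $\mu_{\Li \otimes M}$.

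Surjectivity of $\mu_{\Li \otimes M}$ onto the full symmetric group does not a priori imply surjectivity of this stabilizer onto the symmetric group of $C_\Li$'s nodes alone, so the reduction is not purely formal: this is the main obstacle. To overcome it I would construct enough transpositions in $\sv{\Li}$ directly, using the phase-tropical framework underlying Theorem \ref{thm:patch}. Concretely, one chooses a suitable subdivision of the Newton polygon of $\Li$ and realises local monodromy loops around its walls in the secondary fan as transpositions of pairs of nodes supported near a single bounded edge of the associated tropical curve. The remaining combinatorial content is to show that, for any ample $\Li$ on a smooth complete toric surface, enough such elementary transpositions are available to generate the whole symmetric group. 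The classification of smooth complete toric surfaces as iterated toric blowups of $\cp{2}$ and the Hirzebruch surfaces $F_n$, together with the known surjectivity results of Harris \cite{Ha} for $\cp{2}$ and Tyomkin \cite{Tyom} for Hirzebruch surfaces, would serve as base cases for an inductive argument on the number of blowups.
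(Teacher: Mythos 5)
This statement is a \emph{Conjecture} in the paper; the paper offers no proof of it. In fact the introduction explicitly frames it as an open question ``motivated by computations using the present methods and other methods involving tropical geometry,'' and Section \ref{sec:counterex} is devoted to illustrating why one cannot expect an easy reduction: there exist pairs $(X,\Li)$ with trivial obstruction map $\obs_X$ for which $\mul$ is nevertheless not surjective. So the appropriate baseline here is that no proof exists in the text, and your task is being judged as an original attempt.

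Your opening paragraph is correct and is exactly the observation the paper makes: when $X$ is smooth, $Q_X$ is a point, so $\Aut(\obs_X)$ is the full symmetric group, and Theorem \ref{thm:main} settles the conjecture for $\Li \in \nc{\ell(X)}(X)$. The remainder of your proposal, however, is a programme rather than a proof, and you yourself flag the decisive gap. The degeneration argument in your second paragraph runs in the wrong direction: lifting loops of $\sv{\Li}$ into $\sv{\Li\otimes M}$ yields only the inclusion of $\im(\mu_\Li)$ into a stabilizer subgroup, i.e.\ an upper bound on $\im(\mu_\Li)$, which is already trivially known. Surjectivity of $\mu_{\Li\otimes M}$ gives no handle on that stabilizer. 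The phase-tropical construction of elementary transpositions near walls of the secondary fan is not carried out, and this is precisely where the difficulty lies: the kite examples in Section \ref{sec:counterex} show that even with trivial $\obs_X$ there can be additional imprimitivity blocks for small $\Li$, and your argument never rules out the analogous phenomenon for smooth $X$. Finally, the proposed induction over toric blowups of $\cp{2}$ and Hirzebruch surfaces lacks any inductive step, and the paper itself notes that Tyomkin's result in \cite{Tyom} concerns a ``slightly different monodromy map,'' so even your base cases require care. In short, the first paragraph is a valid (and known) partial result, but the rest does not constitute a proof of the conjecture.
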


The latter is motivated by computations using the present methods and other methods involving tropical geometry. With the above conjecture, we wish to emphasise that there is a substantial room for improvement in Theorem \ref{thm:main}. The latter theorem and the nature of the map $\obs_X$ imply that the surjectivity of the map $\mul$ depends only on the toric surface $X$, at least for sufficiently big $\Li$. In Section \ref{sec:counterex}, we show that the general situation is slightly more subtle. We provide examples of pairs $(X, \Li)$ for which $\obs_X$ is trivial and $\mul$ is however not surjective. These examples indicate the existence of other types of obstructions that would be worth investigating. They also indicate that the minimal constant $\ell(X)$ for which the conclusion of Theorem \ref{thm:main} holds is a non-trivial function of $X$.

Concerning applications, the existence of pairs $(X,\Li)$ with non-transitive monodromy map $\mul$ suggests possible generalisations of the example \cite{Tyom14} of toric surfaces with reducible Severi varieties. Indeed, it was proven in \cite{Ha} that the number of irreducible components of the Severi variety $\sv{d,g}$ of irreducible projective curves of genus $g$ and degree $d$ is bounded from above by the number of orbits of $g$-tuples of nodes of a curve $C \in \sv{d,0}$ under the monodromy action of $\sv{d,0}$. It is reasonable to expect, although it is highly non-trivial, that the same upper bound is valid for more general toric surfaces, see for instance \cite{Tyom}, \cite{Vak} and the forthcoming \cite{CHT20a}. In these circumstances, the Severi variety of genus $g$ curves associated to the pair $(X,\Li)$ is reducible only if the map $\mul$ is not $g$-transitive. Theorem \ref{thm:main} provides infinitely many such pairs. Following these observations, we show in \cite{LT} that such pairs $(X,\Li)$ carry reducible Severi varieties. 

Let us conclude this introduction with a brief description of our approach. The main idea is to use rational simple Harnack curves as an interface between the present monodromy problem and a purely combinatorial game. On the one hand, simple Harnack curves (see \cite{Mikh}) are real algebraic curves with remarkable properties (see \cite{MR}, \cite{KO}, \cite{PRis}) that are available in any ample linear system $\LL$. On the other hand, the pair $(X,\Li)$ corresponds to a lattice polygon $\Delta$ in the character lattice $M$ of the torus $\ttor \subset X$. Standard arguments on amoebas (see Section \ref{sec:harnack}) allow to identify the set of nodes of a rational simple Harnack curve $C\in \sv{\Li}$ with the set of points $\itr(\Delta)\cap M$. 
In turn, it allows us to express $\mul$ as a map $\mud : \pi_1\big( \sv{\Li}, C \big) \rightarrow \Aut\big( \itr(\Delta) \cap M\big)$ with corresponding obstruction map $\obs_{\partial\Delta}$ that we describe in Section \ref{sec:obs}.

In Section \ref{sec:maintool}, we study the map $\mud$ in the case when $\Delta:=T$ is a triangle. More precisely, we determine the image $G_T$ of the ``restriction'' of $\mu_T$ to a certain subspace of the Severi variety $\sv{T}$. 

In Section \ref{sec:patch}, we use methods inspired by Viro's Patchworking technique (see \cite{Viro08}) to show that the inclusion of a triangle $T$ in a general lattice polygon $\Delta$ induces the existence of a subgroup $\widetilde G_T$ of $\im(\mud)$ nearly equal to the group $G_T$ above.

In Section \ref{sec:combinatorics}, we show that under the assumptions of Theorem \ref{thm:main}, the groups $\widetilde G_T$, for all appropriate inclusions $T\subset \Delta$, generate the group of deck transformations of the map $\obs_{X}$.

\begin{ack} The author would like to extend his warmest thanks to Erwan Brugall\'{e}, R\'{e}mi Cr\'{e}tois, Alexander Esterov, Nick Salter, Kristin Shaw and Ilya Tyomkin for helpful discussions. The author is also grateful to an anonymous referee for numerous valuable comments on a prior version of this text.
\end{ack}

\tableofcontents

\section{Setting}\label{sec:settings}
In this section, we introduce the definitions and some elementary properties of objects mostly coming from toric geometry. 

We define complete toric surfaces starting from a lattice  $\emph{M} \simeq \Z^2$. Denote by $\emph{\Xb}$ the algebraic group $\Hom_{\Z}\big(M,\C^\ast\big) \simeq\ttor$. Reciprocally, the lattice $M$ is the lattice of character (or Laurent monomials) of the torus $\Xb$. A lattice polygon $\emph{\Delta}$ in $M_\R:= M \otimes \R$ is a $2$ dimensional polygon obtained as the convex hull of a finite set in $M$. We denote the set of interior lattice points $\emph{\itrz(\Delta)}:=\itr(\Delta) \cap M$ and $\emph{\length(v)}$ the integer length of any lattice vector $v \in M$. For technical reasons, we will always assume that the origin $0\in M$ is a vertex of $\Delta$.
Define the toric surface $\emph{X}:=X_\Delta \supset \Xb$ as the closure of the monomial embedding 
\begin{equation}\label{eq:monomialembed}
\Xb \hookrightarrow \cp{\vert \Delta \cap M\vert-1}
\end{equation}
given coordinate-wise by the monomials $\Delta \cap M$, see \cite[\S 2.3]{CLS}. A lattice polygon defines a toric surface together with a line bundle. For a more intrinsic definition, we refer to the construction using fans, see \cite[\S 1.4]{Ful}.

Throughout the text, we denote by $\emph{n}$ the number of edges of the polygon $\Delta$ defining the toric surface $X$ and fix once and for all a counter-clockwise cyclical indexation of these edges in $\Z/n\Z$. For any $j\in \Z/ n\Z$, we denote by $\emph{\Delta_j}$ the $j^{th}$ edge of $\Delta$ and by $\emph{\Delta_{j,j+1}}$ the vertex $\Delta_{j}\cap \Delta_{j+1}$. We will also denote by $\emph{v_j} \in M$ the primitive integer vector supporting $\Delta_j$ and agreeing with the counter-clockwise orientation on $\partial \Delta$.

The group action of $\Xb$ on itself extends to whole of $X$. To each edge $\Delta_j$ corresponds the closure of an $\Xb$-orbit of dimension $1$ in $X$ that we refer to as a \emph{toric divisor} of $X$. For any $j\in \Z/ n\Z$, we denote by $\emph{\D_j} \simeq \cp{1}$ the orbit corresponding to $\Delta_j$. The classes of the divisors $\D_j$, $j\in \Z/ n\Z$, generate the Picard group $\pic(X)\simeq H^2(X,\Z)$, see \cite[\S 3.4]{Ful}. In particular, any divisor class $[D]\in \pic(X)$ is determined by its intersection multiplicities with the $\D_j$-s.

On the toric surface $X$, a line bundle $\emph{\Li}$ is ample if and only if it is very ample if and only if $\emph{\ell_j}:=\deg\big(\Li_{\vert \D_j} \big) >0$ for any $j\in \Z/ n\Z$, see \cite[Theorems 6.3.13 and 6.1.14]{CLS}. As mentioned above, the integers $\ell_j$, $j\in \Z/ n\Z$, determine the line bundle $\Li$. Moreover, they satisfy
\begin{equation}\label{eq:linebundle}
\sum_{j\in \Z/ n\Z} \ell_j \cdot v_j=0
\end{equation}
Conversely, every collection $\big\lbrace\ell_j\big\rbrace_{j\in \Z/ n\Z}$ satisfying \eqref{eq:linebundle} is given by $\ell_j=\deg\big(\Li_{\vert \D_j} \big)$ for some line bundle $\Li$ on $X$, see \cite[Proposition 6.4.1]{CLS}. Equivalently, an ample line bundle on $X$ is encoded by the lattice polygon obtained as the concatenation of the vectors $\ell_1 \cdot v_1,\cdots, \ell_n \cdot v_n$ for integers $\ell_j\geqslant 1$ satisfying \eqref{eq:linebundle}. For such lattice polygon $\Delta$, we will denote by $\emph{\Li_\Delta} \in \aC(X)$ the corresponding element in the set of ample line bundles $\emph{\aC(X)}$ on $X$. We also define
\[\emph{\aC_{\geqslant \ell}(X)}:= \left\lbrace \Li \in \aC_{\geqslant \ell}(X) \; \big| \; \deg\big(\Li_{\vert \D_j} \big) \geqslant \ell, \; j\in \Z/ n\Z\right\rbrace.\]

For any ample line bundle $\Li$ on $X$ and any point $C \in \LL$, we abusively denote by $C$ the corresponding curve $C\subset X$. We denote by $\emph{\sv{\Li}}\subset \LL$ the space of irreducible nodal rational curves contained in the smooth locus of $X$. For simplicity, we also denote $\emph{\sv{\Delta}}:=\sv{\Li_\Delta}$. According to \cite[Proposition 4.1]{Tyom}, the set $\sv{\Li}$ is non-empty and irreducible. Moreover, each curve $C \in \LL$ can be parametrised explicitly as follows, see \cite[Section $4$]{Tyom}. Choose coordinates $\Xb\simeq \ttor$ such that we can write $v_j=(\beta_j,-\alpha_j)$ in the induced coordinates on $M$. If $\Li$ is given by the collection  $\big\lbrace\ell_j\big\rbrace_{j\in \Z/ n\Z}$, then any irreducible rational curve $C \in \LL$ admits a parametrisation of the form
\begin{eqnarray}\label{eq:param}
\begin{array}{rcl}
 \phi \; : \; \cp{1} 	& \dashrightarrow	& \ttor \\
 t  							& \mapsto 			& \displaystyle \Big( z_0 \,  \prod_{j\in \Z/ n\Z} \; \;  \prod_{1\leq l \leq \ell_j} \big(t-a_{j,l}\big)^{\alpha_j}, \; w_0 \,  \prod_{j\in \Z/ n\Z} \; \; \prod_{1\leq l \leq \ell_j} \big(t-a_{j,l}\big)^{\beta_j}\Big)	
\end{array}
\end{eqnarray}
where  $z_0, \, w_0 \in \C^\ast$ and  $a_{j,l} \in \cp{1}=\C \cup\{\infty\}$. In the above formula, any factor $t-a_{j,l}$ with $a_{j,l}=\infty$ is to be replaced with the constant factor $1$. Such a representation is unique up to the action of $\PGL_2(\C)$ on the parameter $t$. 

Conversely, for an irreducible curve $C$ parametrised by \eqref{eq:param} to be in $\LL$, it suffices that the $a_{j,l}$-s are pairwise distinct. In this situation, the curve $C$ avoids the set of torus fixed point of $X$ which is a superset of the singular locus of $X$. Observe also that the curve $C$ is nodal for a generic choice of parameters $a_{j,l}$. This fact can be justified for instance by the existence of rational simple Harnack curve in the linear system $\LL$, see Section \ref{sec:harnack}. It follows then from  \cite{Kho} that $C$ has exactly $\vert \itrz(\Delta)\vert$ many nodes. As a consequence, the variety $\sv{\Li}$ can be seen as an open Zariski subset of the space of parameters $(z_0,w_0)$, $\left\lbrace a_{j,l}\right\rbrace$, quotiented by the action of $\PGL_2(\C)$.  In particular, the set $\sv{\Li}$ has dimension $\vert \partial \Delta \cap M \vert-1$. 

Let us now define the monodromy map $\mu_{\Li}$. Below, we use $\emph{\Aut(\mathcal{E})}$ to denote the group of permutations on a finite set $\mathcal{E}$ and $\emph{\Aut(f)}$ to denote the group of deck transformations of a map $f$ between finite sets. For a reference curve $C \in \sv{\Li}$, we denote 
\begin{equation}\label{eq:monomap}
\emph{\mu_\Li} : \pi_1 \big( \sv{\Li}, C \big) \rightarrow \Aut \big( \big\lbrace \text{nodes of } C \big\rbrace \big)
\end{equation}
the \emph{monodromy map} of the covering $\left\lbrace (C,\nu)\in \sv{\Li}\times X \big| \nu \in C_\theta \text{ is a node}  \right\rbrace \rightarrow \sv{\Li}$ formally defined as follows. For a loop $\gamma:=\big\lbrace C_\theta \big\rbrace_{\theta \in [0,1] } \subset \sv{\Li}$ based at $C$, that is $C=C_0=C_1$, consider the map 
\[ \Phi : \big\lbrace (\theta, \nu) \in [0,1] \times X \; \big| \; \nu \in C_\theta \text{ is a node}  \big\rbrace \rightarrow 
\big\lbrace \text{nodes of } C \big\rbrace \]
that associates to any point $(\theta,\nu)$ the node $\nu'\in \big\lbrace \text{nodes of } C \big\rbrace$ such that $(0,\nu')$ is the initial point of the connected component of the source space of $\Phi$ containing $(\theta,\nu)$. Since $C_1=C$, the map $\Phi(1, \, \_ \;)^{-1}$ is an element of $\Aut\big( \big\lbrace\text{nodes of } C \big\rbrace\big)$ that only depends on the homotopy class $[\gamma] \in \pi_1 \big( \sv{\Li}, C \big)$. Then, we define $\mu_\Li \big( [\gamma]\big) := \Phi(1, \, \_ \;)^{-1}$.

\section{Rational simple Harnack curves}\label{sec:harnack}

Rational simple Harnack curves will be a useful tool to describe the  obstruction map $\obs_X$ (see Section \ref{sec:obs}) and to construct explicit subgroups in $\im(\mul)$ (see Section \ref{sec:patch}). In this section, we recall some basic facts about these curves.

First, we shall fix coordinates $(z,w)$ on $\Xb \simeq \ttor$.
The complex conjugation on $\Xb \simeq \ttor$ extends to an anti-holomorphic involution $\emph{\conj}$ on $X$. A curve $C \in \LL$ is \emph{real} if $\conj(C)=C$ and we denote by $\emph{\R C}$ the fixed locus of $\conj_{\vert C}$. Recall the \emph{amoeba map}
\[
\begin{array}{rcl}
\emph{\A} : \ttor & \rightarrow & \R^2\\
(z,w) & \mapsto & \big( \log \vert z \vert, \log \vert w \vert \big)

\end{array}
\]
and the notation $\emph{\Cb}:=C\cap\Xb$ from the introduction.

\begin{Definition}
A real curve $C \in \LL$ is a (possibly singular) \emph{simple Harnack curve} if the restriction of the amoeba map $\A : \Cb \rightarrow \R^2$ is at most $2$-to-$1$.
\end{Definition}

The above definition is shown to be equivalent to the original definition \cite[Definitions $2$ and $3$]{MR} in \cite[Theorem $1$]{MR}. Recall that smooth simple Harnack curves are maximal, that is $b_0(\R C)=g+1$ where $g$ is the genus of $C$; only one component of $\R C$ intersects the toric divisors of $X$; the restriction of $\A$ to $\Cb$ realises the quotient of $\Cb$ by $\conj$. In particular, the boundary of $\A(\Cb)$ is exactly $\A(\R \Cb)$ (see \cite[Lemma 8]{Mikh}) and $\A(\Cb)$ is a closed subset of $\R^2$ with $g$ holes.
%
%

Singular simple Harnack curves are now obtained from smooth ones by contracting some of the  holes of the amoeba, or equivalently, by contracting some of the compact ovals of $\R \Cb$ to points. A local model for such contractions is given by $z^2+w^2=\varepsilon$, $0\leq \varepsilon <1$. In particular, the only singularities of simple Harnack curves are real isolated double points. We refer to \cite{MR} for more details.

The existence of smooth simple Harnack curves in $\LL$ is guaranteed by \cite[Corollary A4]{Mikh}. For singular curves, the existence is addressed in \cite[Theorem 6]{KO}, \cite[Theorems 2 and 10]{Bru14},  \cite[Theorem 3]{CL1} in various contexts. For rational curves, there is an explicit construction. Recall that any real rational curve $C \in \LL$ admits a parametrisation as in \eqref{eq:param} with $z_0, \, w_0 \in \R^\ast$ and  $a_{j,l} \in \rp{1}$. Fix an orientation on $\rp{1}$ so that the collection of parameters $a_{j,l} \in \rp{1}$ inherits a cyclical ordering. As a consequence of \cite[Theorem 10]{Bru14}, we have the following.

\begin{Proposition}\label{prop:ratharnack}
A real rational curve in $\sv{\Li}$  is a simple Harnack curve if and only if it can be parametrised as in \eqref{eq:param} with $z_0, \, w_0 \in \R^\ast$ and  $a_{j,l} \in \rp{1}$ such that for any $j\in \Z/ n\Z$ and $1\leq l \leq \ell_j$, we have $$a_{(j-1), \ell_{j-1}} < a_{j,l} < a_{(j+1),1}.$$
In particular, there always exist rational simple Harnack curves in the linear system $\LL$.
\end{Proposition}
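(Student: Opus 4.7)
The plan is to derive the proposition directly from \cite[Theorem 10]{Bru14}, which classifies simple Harnack curves in $\LL$ via a cyclical condition on the preimages of the toric divisors in the normalisation.

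First I would show that any real rational simple Harnack curve $C \in \sv{\Li}$ admits a parametrisation of the form \eqref{eq:param} with $z_0, w_0 \in \R^\ast$ and $a_{j,l} \in \rp{1}$. Since every node of $C$ is a real isolated double point obtained by contracting a compact oval of the smoothing, the normalisation map $p \colon \cp{1} \to C$ intertwines $\conj$ with a real structure on $\cp{1}$ whose fixed locus is a circle; after a change of coordinates on $\cp{1}$, I may assume this real structure is standard complex conjugation. The unique non-compact component of $\R C$ is then the image under $p$ of $\rp{1}$, and it meets each toric divisor $\D_j$ in $\ell_j$ real points, whose preimages are the $a_{j,l}$. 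The factorisation \eqref{eq:param} then forces $z_0, w_0 \in \R^\ast$.

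For the cyclic ordering, recall that for simple Harnack curves one has $\partial \A(\Cb) = \A(\R \Cb)$ (see \cite[Lemma 8]{Mikh}), so the tentacles of $\A(\Cb)$ go to infinity in the outward normal directions to the edges $\Delta_1, \dots, \Delta_n$, cyclically ordered by $\partial \Delta$. Travelling along the unique non-compact component of $\R \Cb$, which is topologically a circle minus the isolated real double points, one reads off its successive intersections with $\D_1, \dots, \D_n$ in this same cyclic order; for each $j$, the $\ell_j$ preimages of $C \cap \D_j$ thus appear consecutively along $\rp{1}$. Indexing them $a_{j,1}, \dots, a_{j, \ell_j}$ compatibly with the orientation of $\rp{1}$ yields the desired inequality $a_{(j-1), \ell_{j-1}} < a_{j,l} < a_{(j+1),1}$.

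Conversely, \cite[Theorem 10]{Bru14} asserts that any parametrisation of the form \eqref{eq:param} with $z_0, w_0 \in \R^\ast$ and cyclically ordered $a_{j,l} \in \rp{1}$ defines a rational simple Harnack curve in $\sv{\Li}$. The existence statement then follows by picking the $a_{j,l}$ in disjoint sub-arcs of $\rp{1}$, one per edge of $\Delta$. The hardest conceptual step, which I would borrow from \cite{Bru14}, is precisely the identification between the cyclic ordering of the real parameters along $\rp{1}$ and the cyclic ordering of the edges around $\partial \Delta$; all the remaining ingredients (real structure on the normalisation, behaviour of the amoeba boundary, intersection counts with the toric divisors) are standard.
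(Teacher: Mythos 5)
Your proposal is correct and takes essentially the same route as the paper, which offers no proof beyond the sentence ``As a consequence of [Bru14, Theorem 10], we have the following''; you simply unpack that citation, using the standard facts about real structures on the normalisation and the boundary behaviour of the amoeba to get the forward direction, and quoting [Bru14, Theorem 10] for the converse and existence. Nothing in your argument conflicts with the paper's (implicit) reasoning.
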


Recall that for smooth simple Harnack curves $C \in \LLd$, the order map $\emph{\ord}$ of \cite{FPT} establishes a bijective correspondence between the set of compact connected components of $\R \Cb$ and the set of lattice points $\itrz(\Delta)$, see \cite[Corollary $10$]{Mikh}. We now aim to extend this correspondence to rational simple Harnack curves. To do so, we use the existence of deformations of rational simple Harnack curves to smooth simple Harnack curves. The existence of such deformations can be proven using the machinery of \cite[\S 4.5]{KO} in general toric surfaces, see \cite{Ola}. As a more direct approach, take $P(z,w)$ to be a real Laurent polynomial of Newton polygon $\Delta$ defining $C$. Then, we can find a real polynomial $R$ with Newton polygon $\Delta$ such that, for any node $\nu\in C$, we have $R(\nu)>0$ (respectively $R(\nu)<0$) if $\Hess P(\nu)$ is positive definite (respectively negative definite). A small deformation of $P$ in the direction of $R$ gives the desired smoothing.

According to \cite[Lemma 11]{Mikh}, the map $\ord$ on a smooth simple Harnack curve $C$ can be described as follows. First, assume that the vertex $\Delta_{n,1}$ is the origin. Let $c_0\subset\R \Cb$ be the unique connected component joining the two consecutive toric divisors $\D_{n}$ and $\D_{1}$. For any compact component $c$ of $\R \Cb$, draw a path $ \rho_c \subset \A(C)$ joining $\A(c_0)$ to $\A(c)$. By the $2$-to-$1$ property of the amoeba map, the lift $\gamma_c:=\A^{-1}(\rho_c)$ is a loop in $\Cb$ invariant by complex conjugation. There is exactly one orientation of the latter loop such that the corresponding homology class $(a,b) \in H_1(\ttor, \Z)$ satisfies $(-b,a) \in \itrz(\Delta)$ (note the sign mistake in the sixth line of the proof of \cite[Lemma 11]{Mikh}). Then, we have $\ord(c)=(-b,a)$.

For a rational simple Harnack curve $C \in \sv{\Li_\Delta}$ and a node $\nu \in C$, we can repeat the same construction where $\rho_\nu$ is a path joining $\A(c_0)$ to $\A(\nu)$ and $\gamma_\nu:=\A^{-1}(\rho_\nu)$.

\begin{Definition}\label{def:ord}
For a rational simple Harnack curve $C \in \sv{\Li_\Delta}$, a node $\nu\in C$, define the \emph{order map}
$$\ord(\nu)=(-b,a) \in \itrz(\Delta)$$ where $(a,b) \in H_1(\ttor, \Z)$ is the homology class of the (carefully oriented) loop $\gamma_\nu$ constructed above.
\end{Definition}

The fact that  $(-b,a) \in \itrz(\Delta)$ (with the appropriate orientation of $\A^{-1}(\gamma)$) follows from continuity by applying the above construction to a nearby smooth simple Harnack curve.

\begin{Proposition}\label{prop:extord}
For a rational simple Harnack curve $C \in \sv{\Li_\Delta}$, the map 
\[ \ord : \lbrace \text{nodes of } C\rbrace \rightarrow \itrz(\Delta)\]
is a bijection.
\end{Proposition}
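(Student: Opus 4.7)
The plan is to reduce the statement to the smooth case of \cite[Corollary 10]{Mikh} (as recalled via \cite[Lemma 11]{Mikh}) by using the smoothing of $C$ into a smooth simple Harnack curve described just before Definition \ref{def:ord}. Concretely, since $C$ is in $\sv{\Li_\Delta}$, Khovanskii's genus formula gives exactly $|\itrz(\Delta)|$ nodes, so both source and target of $\ord$ have the same (finite) cardinality and it is enough to establish either injectivity or surjectivity.

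To do so, I would fix a smoothing $\{C_s\}_{s\in[0,\varepsilon)}$ of $C=C_0$ such that $C_s$ is a smooth simple Harnack curve for $s>0$. Because the only singularities of $C$ are the real double points locally modelled on $z^2+w^2=0$, the local smoothing $z^2+w^2=s$ produces, near each node $\nu$, a unique compact real oval $c_{\nu,s}\subset \R C_s^\bullet$ that contracts to $\nu$ as $s\to 0$; away from the nodes, $C$ and $C_s$ are diffeomorphic. Since $C_s$ has genus $g=|\itrz(\Delta)|$, it has exactly $|\itrz(\Delta)|$ compact ovals, so the assignment $\nu\mapsto c_{\nu,s}$ is a bijection between the nodes of $C$ and the compact components of $\R C_s^\bullet$.

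Next, I would compare the two constructions of the order. Let $\rho_\nu\subset \A(C)$ be the path from $\A(c_0)$ to $\A(\nu)$ used in Definition \ref{def:ord}. For $s>0$ small, $\rho_\nu$ perturbs to a path $\rho_{c_{\nu,s}}\subset \A(C_s)$ from $\A(c_{0,s})$ to $\A(c_{\nu,s})$, since amoebas vary continuously with $s$ and the hole of $\A(C_s)$ bounded by $\A(c_{\nu,s})$ collapses to the point $\A(\nu)$. Away from a small neighbourhood of $\nu$ the $2$-to-$1$ covering $\A^{-1}\colon \Cb\to \A(\Cb)$ is unchanged, and inside that neighbourhood the loop $\gamma_\nu$ encircles the node while $\gamma_{c_{\nu,s}}$ encircles the vanishing oval; the two loops coincide on the common complement of a small disk and are freely homotopic in $\Xb\simeq \ttor$. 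Consequently they represent the same class in $H_1(\ttor,\Z)$, and with the orientation convention of Definition \ref{def:ord} we obtain $\ord(\nu)=\ord(c_{\nu,s})$.

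By \cite[Lemma 11]{Mikh}, the map $c\mapsto \ord(c)$ is a bijection from the compact ovals of $\R C_s^\bullet$ onto $\itrz(\Delta)$. Composing with the bijection $\nu\mapsto c_{\nu,s}$ from the smoothing, $\ord$ on $\{\text{nodes of }C\}$ is a bijection onto $\itrz(\Delta)$. The main subtlety to be careful about is the topological matching of $\gamma_\nu$ with $\gamma_{c_{\nu,s}}$: one must argue that the $2$-to-$1$ property of $\A_{|\Cb}$ (at most $2$-to-$1$ for singular $C$, exactly $2$-to-$1$ on the interior of the amoeba in both cases) forces the lifts of homotopic paths to be homotopic loops in $\Cb$, modulo the local model near the node where the oval collapses. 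Granting this continuity argument, the proposition follows.
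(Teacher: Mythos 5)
Your argument is correct and is essentially an expansion of the paper's own one-line proof, which also deduces the statement from the existence of a smoothing of $C$ into a smooth simple Harnack curve and the bijectivity of the order map in the smooth case \cite[Corollary 10]{Mikh}. The paper leaves the matching of nodes with vanishing ovals and the continuity of the order map implicit, while you spell these out; the substance is the same.
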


\begin{proof} This is a direct consequence of the existence of smoothings and the fact that the order map on smooth simple Harnack curve is bijective, see \cite[Corollary $10$]{Mikh}.
\end{proof}

For an ample line bundle $\Li=\Li_\Delta$ on $X$ and a rational simple Harnack curve $C\in \sv{\Li}$, define the \emph{monodromy map} 
\begin{equation}\label{eq:monomapdelta}
\emph{\mud} : \pi_1 \big( \sv{\Li}, C \big) \rightarrow \Aut \big( \itrz(\Delta) \big).
\end{equation}
by  ``composition'' of the monodromy map $\mu_\Li$ of \eqref{eq:monomap} with the order map of Definition \ref{def:ord}. Formally, we have $\mud\big([\gamma]\big)= \ord \circ \Phi\big(1, \, \_\; \big) \circ \ord^{-1}$ where $\Phi$ is the trivialisation used to define $\mu_\Li$ in \eqref{eq:monomap}.

\section{Obstructions}\label{sec:obs}

In this section, we investigate the obstructions constraining the image of the monodromy map $\mu_\Li$. 

As before, denote by $\Xb\subset X$ the 2-dimensional torus orbit and denote $\emph{N}:= H_1(\Xb,\Z)$. Any toric divisor $\D_j\subset X$, $j \in \Z/n\Z$, defines the 1-dimensional sublattice $N_j\subset N$ generated by the class of the boundary of a small disc $D\subset X$ intersecting $\D_j$ transversally away from a torus fixed point. Define $N_X \subset N$ as the sum of $N_j$ over all $j \in \Z/n\Z$. 

For any curve $C\in \sv{\Li}$, the class of a small loop around any of the punctures of $\Cb$ is an element in $N_X$. Indeed, the latter claim is clear if $C$ intersects transversally each of the toric divisors. Since the space of such curves is an open Zariski subset of $\sv{\Li}$, the claim is also true in general.

In turn, we can associate to any node $\nu \in C$ a path in the normalisation of $\Cb$ that joins the two branches of $\nu$. This path maps to a loop $\gamma_\nu \subset \Cb$. Once an orientation is fixed on $\gamma_\nu$, or equivalently once an ordering of the two branches of $\nu$ is fixed, the projection of the homology class  $[\gamma_\nu]$ in $N\big/ N_X$ does not depend on the choice of the above path, since the normalisation of $\Cb$ is a punctured sphere. Since switching the ordering of the two branches of $\nu$ exchanges $[\gamma_\nu]$ with $-[\gamma_\nu]$, the map
\[ \emph{\obs_{X,C}} : \{\text{nodes of } C\} \rightarrow \big(N\big/ N_X\big)\big/ \pm \id\]
sending the node $\nu$ to the projection of the class $[\gamma_\nu]$, for any ordering of the branches of $\nu$, is independent of any choice. 

\begin{Lemma}\label{lem:obsmap}
The image of the monodromy map $\mul : \pi_1\big( \sv{\Li}, C \big) \rightarrow \Aut \big(\lbrace \text{nodes of } C \rbrace \big)$ is a subgroup of the group of deck transformations $\Aut(\obs_{X,C})$.
\end{Lemma}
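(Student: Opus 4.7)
The plan is a continuity/discreteness argument: along any loop in $\sv{\Li}$, a node tracked continuously carries a continuously varying obstruction class, and since the target $(N/N_X)/\pm\id$ is discrete, this class must be constant.

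First, I would fix a loop $\gamma : [0,1]\to \sv{\Li}$ based at $C$ and a node $\nu\in C$. By the very definition of $\mu_\Li$ in \eqref{eq:monomap} via the étale covering $\{(C',\nu')\in \sv{\Li}\times X \mid \nu' \in C' \text{ node}\}\to \sv{\Li}$, the loop lifts to a continuous family of nodes $\nu_\theta \in C_\theta := \gamma(\theta)$ with $\nu_0 = \nu$ and $\nu_1 = \mu_\Li([\gamma])(\nu)$. Since the source and target of $\obs_{X,C_\theta}$ only depend on the discrete combinatorial type of $C_\theta$ and on $X$ respectively, it suffices to show that $\theta \mapsto \obs_{X,C_\theta}(\nu_\theta) \in (N/N_X)/\pm\id$ is independent of $\theta$.

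Second, I would prove local constancy. Around any $\theta_0 \in [0,1]$ one has a local $C^\infty$-trivialization of the universal family of nodal rational curves in $\sv{\Li}$, obtained for instance by applying Ehresmann's theorem to the simultaneous normalization (which is smooth over $\sv{\Li}$ since the nodes form an étale cover). On a sufficiently small interval $I \ni \theta_0$, this lets me make a continuous choice of: (i) an ordering of the two branches of $\nu_\theta$ in the normalization $\widetilde{C}_\theta$, and (ii) a path $\tilde{\gamma}_{\nu_\theta} \subset \widetilde{C}_\theta$ from one branch to the other, avoiding the preimages of punctures. Projecting to $C_\theta \subset X$ yields a continuous family of loops $\gamma_{\nu_\theta} \subset \Cb_\theta \subset \Xb$ whose homology class $[\gamma_{\nu_\theta}] \in N = H_1(\Xb,\Z)$ varies continuously in the discrete lattice $N$, hence is constant on $I$. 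Its image in $(N/N_X)/\pm\id$ coincides by construction with $\obs_{X,C_\theta}(\nu_\theta)$, which is therefore constant on $I$.

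Third, any two local choices from the previous step differ by a reordering of the two branches (contributing a sign) and by replacing a path by another, which differs from the first by loops around punctures of $\Cb_\theta$ (contributing classes in $N_X$); both ambiguities are annihilated in the quotient $(N/N_X)/\pm\id$. Consequently the locally constant values patch together into a genuinely constant function $\theta \mapsto \obs_{X,C_\theta}(\nu_\theta)$ on the connected interval $[0,1]$. Evaluating at $\theta = 0,1$ yields $\obs_{X,C}(\nu) = \obs_{X,C}\big(\mu_\Li([\gamma])(\nu)\big)$, i.e. $\mu_\Li([\gamma])$ preserves the fibers of $\obs_{X,C}$, as required.

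The only non-formal step is the continuous/simultaneous choice of path in the normalizations, and this is a standard local triviality fact for families of nodal curves whose nodal locus is étale over the base — I expect no serious difficulty there.
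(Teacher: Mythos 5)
Your proposal is correct and takes essentially the same approach as the paper: a continuity/discreteness argument showing that the obstruction class of a continuously-tracked node is locally constant, hence constant along the loop. The paper's proof simply asserts that $\obs_{X,C_\theta}$ is continuous in $\theta$ and concludes; you supply the details it leaves implicit (local trivialization via Ehresmann on the simultaneous normalization, continuous choice of branch ordering and path, and the observation that both ambiguities vanish in the quotient $(N/N_X)/\pm\id$), all of which are sound.
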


\begin{proof}
Let $\gamma : S^1 \rightarrow \sv{\Li}$ be a loop based at $C$ and denote $C_\theta:=\gamma(\theta)$. For any $\theta \in S^1$, the map $\obs_{X,C_\theta}$ defined on the curve $C_\theta$ is continuous in $\theta$. It follows that the permutation on the nodes of $C$ induced by the loop $\gamma$ has to preserve the fiber of the map $\obs_{X,C}$.
\end{proof}

From now on, we will simply denote by $\emph{\obs_X}$ the map $\obs_{X,C}$ and by $\emph{Q_X}$ the target space of $\obs_X$.
We now describe the obstruction map $\obs_X$ in terms of the order map given in Definition \ref{def:ord}. Let $\Delta \subset M_\R$ be the lattice polygon corresponding to the line bundle $\Li$. Recall that, by convention, the origin $0\in M$ is a vertex of $\Delta$, see Section \ref{sec:settings}. Define $\emph{M_\Delta} \subset M$ to be the lattice of finite index generated by $\partial \Delta \cap M$ and define $\emph{Q_\Delta}:= (M / M_\Delta)\big/ \pm \id$. Finally, define \emph{the obstruction map} 
$$ \obs_{\partial\Delta} : \itrz(\Delta) \rightarrow Q_\Delta$$
to be the restriction of the quotient map $M \rightarrow Q_\Delta$.

\begin{Proposition}\label{prop:obs}
Let $\Li \in \aC(X)$ and $\Delta\subset M_\R$ such that $\Li=\Li_\Delta$.
For any simple Harnack curve $C \in \sv{\Li}$, the maps $\obs_X$ and $\obs_{\partial\Delta} \circ \ord$ defined on $\lbrace \text{nodes of } C \rbrace$ have identical fibers. In particular, the image $\mud : \pi_1 \big( \sv{\Li}, C \big) \rightarrow \Aut \big(\itr(\Delta) \cap \Z^2\big)$  is a subgroup of 
$\Aut(\obs_{\partial\Delta})$.
\end{Proposition}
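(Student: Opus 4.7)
The plan is to prove the proposition by identifying the two quotient lattices $Q_X$ and $Q_\Delta$ via the natural isomorphism $M\simeq N$ (rotation by $\tfrac{\pi}{2}$) implicit in Definition \ref{def:ord}, and then comparing the classes of the relevant loops. The ``in particular'' part will be a direct corollary via Lemma \ref{lem:obsmap}.

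First I would check that the loop $\gamma_\nu=\A^{-1}(\rho_\nu)$ appearing in Definition \ref{def:ord} is a legitimate representative for the class used to define $\obs_X(\nu)$. Since $\A$ is $2$-to-$1$ on $\itr(\A(\Cb))$ and $1$-to-$1$ on $\partial\A(\Cb)$, and since $\A^{-1}(\A(\nu))=\{\nu\}$ while $\A^{-1}$ is single-valued on $c_0$, the preimage of $\rho_\nu$ lifts to a path in the normalisation of $\Cb$ whose two endpoints are precisely the two branches of the node $\nu$. Hence $[\gamma_\nu]\in H_1(\Cb,\Z)/\pi(\cP)$, up to $\pm\id$, is exactly the class used in the definition of $\obs_X$.

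Next I would exhibit the identification of target lattices. In the coordinates $(z,w)$ fixed in Section \ref{sec:harnack}, the lattices $M$ and $N$ are identified via $(x,y)_M\leftrightarrow (y,-x)_N$, which is exactly the convention in Definition \ref{def:ord} ($(-b,a)_M\leftrightarrow (a,b)_N$). Under this isomorphism I claim $M_\Delta$ is sent onto $N_X$. On the $M$-side, since $0\in\partial\Delta$ and consecutive lattice points on the edge $\Delta_j$ differ by the primitive vector $v_j=(\beta_j,-\alpha_j)$, the sublattice $M_\Delta$ generated by $\partial\Delta\cap M$ coincides with the $\Z$-span of $\{v_j\}_{j\in\Z/n\Z}$. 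On the $N$-side, the parametrisation \eqref{eq:param} shows that the local monodromy around $t=a_{j,l}$ in $\ttor$ is $(\alpha_j,\beta_j)\in N$, so $N_j=\Z\cdot(\alpha_j,\beta_j)$ and $N_X$ is the $\Z$-span of these vectors. Under the identification, $v_j=(\beta_j,-\alpha_j)_M$ maps to $(-\alpha_j,-\beta_j)_N=-(\alpha_j,\beta_j)_N$, a generator of $N_j$. Thus the isomorphism $M\simeq N$ descends to a well-defined isomorphism $\bar\varphi:Q_\Delta\xrightarrow{\sim}Q_X$.

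It now remains to observe that $\bar\varphi\circ\obs_{\partial\Delta}\circ\ord=\obs_X$ as maps on $\{\text{nodes of }C\}$: writing $[\gamma_\nu]=(a,b)\in N$, the left-hand side sends $\nu$ to the class of $\ord(\nu)=(-b,a)_M$, which under $\bar\varphi$ becomes the class of $(a,b)_N$, precisely $\obs_X(\nu)$. Since $\bar\varphi$ is a bijection, the two maps have identical fibres. For the ``in particular'' statement, Lemma \ref{lem:obsmap} gives $\im(\mul)\subset\Aut(\obs_X)$; conjugating by the bijection $\ord$ (Proposition \ref{prop:extord}) transports this to $\im(\mud)\subset\Aut(\obs_{\partial\Delta})$, as desired.

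The only genuinely delicate step is the first one, reconciling the two a priori different constructions of the loop attached to a node (the amoeba-theoretic lift $\A^{-1}(\rho_\nu)$ versus an arbitrary path between the branches in the normalisation); once the amoeba cover structure is used, this is essentially a consequence of the $2$-to-$1$ property of $\A$ together with $\A^{-1}(\A(\nu))=\{\nu\}$. The rest is bookkeeping between the dual lattices $M$ and $N$.
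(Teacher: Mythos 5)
Your proof is correct and follows essentially the same route as the paper's: fix coordinates, identify $Q_\Delta$ with $Q_X$ via the $90^\circ$ rotation $(x,y)\mapsto(y,-x)$, and verify that $\obs_X$ and $\obs_{\partial\Delta}\circ\ord$ match under this identification. The only differences are cosmetic: you spell out why the amoeba-lift $\A^{-1}(\rho_\nu)$ is a valid representative for the class used to define $\obs_X(\nu)$ (the paper dispenses with this in one sentence), and you derive the generator of $N_j$ from the parametrisation \eqref{eq:param} rather than from the monomial embedding \eqref{eq:monomialembed}; both lead to the same sublattice.
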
 

\begin{proof}
As in Section \ref{sec:harnack}, let us fix coordinates $(z,w)$ on  $\Xb \simeq \ttor$ so that both $M$ and $N$ inherits coordinates $M\simeq \Z^2$ and $N\simeq \Z^2$.

On the one hand, it follows from standard computations, using for instance the description \eqref{eq:monomialembed} of $X$, that the sublattice $N_j \subset N$ is supported by  $(-b,a)$ where $(a,b)$ is the primitive integer vector directing the edge $\Delta_j\subset \Delta$. In particular, the map $\Z^2\rightarrow \Z^2$, $(a,b) \mapsto (-b,a)$, maps $M_\Delta$ to $N_X$ so that $Q_\Delta\simeq Q_X$.

On the other hand, for any node $\nu\in C$, we can choose the loop $\gamma_\nu\subset C$ used to defined $\obs_X(\nu)$ as we did in Section \ref{sec:harnack} to define $ord(\nu)$. In particular, $ord(\nu)$ is the projection of the vector $(-b,a)$ in $Q_\Delta$ if $\obs_X(\nu)$ is the projection of the vector $(a,b)$ in $Q_X$. The result follows.
\end{proof}

\begin{Corollary}\label{cor:nonsurj}
The map $\mud$ is not surjective whenever $[M:M_\Delta]\geqslant 4$ or $[M:M_\Delta] \in \{2,3\}$ and $M_\Delta \cap \itr(\Delta)$ is non-empty.
\end{Corollary}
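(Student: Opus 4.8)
The plan is to deduce the non-surjectivity of $\mud$ directly from Proposition \ref{prop:obs}, which tells us that $\im(\mud) \subseteq \Aut(\obs_{\partial\Delta})$. Since $\mud$ acts on the finite set $\itrz(\Delta)$ of cardinality $|\itrz(\Delta)|$, it suffices to exhibit a fiber of $\obs_{\partial\Delta}$ that is a proper non-empty subset of $\itrz(\Delta)$: any deck transformation of $\obs_{\partial\Delta}$ must preserve such a fiber, hence cannot realize a transposition moving a point of that fiber to a point outside it, so $\Aut(\obs_{\partial\Delta})$ — and a fortiori $\im(\mud)$ — is a proper subgroup of $\Aut(\itrz(\Delta))$. (One should note the degenerate cases where $|\itrz(\Delta)| \leqslant 1$, in which $\mud$ is trivially surjective; but the hypotheses will force $|\itrz(\Delta)| \geqslant 2$, as I explain below.)

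So the combinatorial heart of the argument is: under the stated hypotheses on $[M:M_\Delta]$, the map $\obs_{\partial\Delta} : \itrz(\Delta) \to Q_\Delta$ fails to be constant, i.e.\ it takes at least two distinct values, while also not being injective (so that some fiber has at least $2$ elements, ensuring the fiber partition is genuinely nontrivial and $|\itrz(\Delta)|\geq 2$). I would handle the two hypothesis cases separately. In the first case, $[M:M_\Delta] \geqslant 4$: here $|Q_\Delta| = (|M/M_\Delta|+1)/2 \geqslant$ roughly $|M/M_\Delta|/2 \geqslant 2$, so $\obs_{\partial\Delta}$ has at least two non-empty fibers provided it is surjective onto the relevant classes — but more carefully, I want to show the image of $\itrz(\Delta)$ in $Q_\Delta$ has at least two elements. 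The key geometric input is that $\Delta$ is a genuine $2$-dimensional lattice polygon with $0$ as a vertex and with $\partial\Delta \cap M$ generating a sublattice of large index, which forces $\itr(\Delta)$ to contain lattice points lying in different cosets mod $M_\Delta$; translating $\itrz(\Delta)$ by a boundary lattice vector and using convexity/interiority gives points in $\itrz(\Delta)$ whose difference is a primitive-ish vector not in $M_\Delta$. In the second case, $[M:M_\Delta]\in\{2,3\}$ with $M_\Delta \cap \itr(\Delta) \neq \emptyset$: now there is an interior lattice point $p$ with $\obs_{\partial\Delta}(p) = 0 \in Q_\Delta$, and I must produce another interior lattice point $q$ with $\obs_{\partial\Delta}(q)\neq 0$; since the index is at least $2$, not every element of $M$ lies in $M_\Delta$, and I would argue that $\itr(\Delta)$ (being $2$-dimensional, as $\Delta$ has interior lattice points and index $\geqslant 2$ makes it reasonably large) must meet a coset other than $M_\Delta$.

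I expect the main obstacle to be exactly this last combinatorial claim: showing that $\itrz(\Delta)$ is not contained in a single coset of $M_\Delta$, i.e.\ that $\obs_{\partial\Delta}$ is non-constant, under each hypothesis. The cleanest route is probably to argue by contradiction: if $\itrz(\Delta) \subseteq x + M_\Delta$ for a single coset $x+M_\Delta$, then combined with $\partial\Delta \cap M \subseteq M_\Delta$ one gets strong restrictions on $\Delta$ — essentially $\Delta$ would have to be, up to the sublattice $M_\Delta$, a "primitive" polygon with no interior points in the sublattice sense, and Pick's theorem applied in the sublattice $M_\Delta$ (whose covolume is $[M:M_\Delta]$ times that of $M$) would bound the area of $\Delta$ from above, while the presence of interior $M$-points and $\partial\Delta\cap M$ generating a full-rank sublattice bound it from below, yielding a contradiction once $[M:M_\Delta]\geqslant 4$ (resp.\ $\geqslant 2$ with an interior point in $M_\Delta$). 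A secondary subtlety to dispatch carefully is the non-injectivity / the $|\itrz(\Delta)|\geqslant 2$ bookkeeping: I would check that in all the cases covered by the corollary, $|\itrz(\Delta)|\geqslant 2$, so that "proper non-empty fiber" genuinely obstructs surjectivity onto $\Aut(\itrz(\Delta))$; when $[M:M_\Delta]\geqslant 4$ this should follow from the area estimate, and when an interior point lies in $M_\Delta$ and the index is $2$ or $3$ the same estimate plus the produced extra point gives it.
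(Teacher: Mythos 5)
Your high-level reduction matches the paper's exactly: by Proposition \ref{prop:obs}, $\im(\mud)\subset\Aut(\obs_{\partial\Delta})$, so it suffices to show $\obs_{\partial\Delta}$ has at least two non-empty fibers (your additional demand of non-injectivity is superfluous; two non-empty fibers already forces $|\itrz(\Delta)|\geqslant 2$ and produces a transposition outside $\Aut(\obs_{\partial\Delta})$). The gap is in the combinatorial core, which you only sketch. Your proposed contradiction hypothesis ``$\itrz(\Delta)\subseteq x+M_\Delta$'' is strictly stronger than what must be refuted: constancy of $\obs_{\partial\Delta}$ only says $\itrz(\Delta)\subseteq (x+M_\Delta)\cup(-x+M_\Delta)$, since $Q_\Delta$ quotients by $\pm\id$. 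Refuting the single-coset containment therefore does not establish non-constancy. Beyond this, the Pick's-theorem route you gesture at is not a routine area count: Pick's theorem relative to $M$ and to $M_\Delta$ relates the total count $i$ of interior $M$-points to the count of interior $M_\Delta$-points, but says nothing a priori about how the remaining interior points distribute among the nonzero cosets, which is precisely what you need to control. You would still have to supply a genuinely new combinatorial ingredient to make that distribution argument go through.

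The paper instead argues constructively. It first shows there is always a lattice triangle $T\subset\Delta$ with $T\cap\partial\Delta\cap M=\{\text{vertices of }T\}$ (take three consecutive non-collinear lattice points of $\partial\Delta$). It then normalises coordinates so that $T$ has vertices $(0,0)$, $(1,0)$, $(p,\alpha q)$, computes $M_\Delta=\Z\oplus\alpha\Z$ and $\obs_{\partial\Delta}(u,v)=\dist(v,\alpha\Z)$, and observes that for each integer $0<h<\alpha q$ the triangle contains exactly one lattice point at height $h$ or $\alpha q-h$. This shows directly that $\itrz(T)\subset\itrz(\Delta)$ surjects onto $Q_\Delta\setminus 0$. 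For $\alpha\geqslant 4$ this already gives two non-empty fibers; for $\alpha\in\{2,3\}$ one combines it with the hypothesis $M_\Delta\cap\itr(\Delta)\neq\varnothing$, which populates the zero fiber. I recommend you replace the Pick sketch by this explicit wedge-triangle computation, which both closes the gap and avoids the $\pm\id$ bookkeeping error.
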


\begin{proof}
Let $\alpha:=[M:M_\Delta]$ with $\alpha \geqslant 2$. Then, there are exactly $\lfloor \alpha/2 \rfloor \geqslant 1$ classes in $Q_\Delta$ that are distinct from the class of the lattice $M_\Delta$. In order to prove the statement, it suffices to show that there exists a lattice point in $\itrz(\Delta)$ that projects to any such class. Indeed, the map $\obs_{\partial\Delta}$ will have strictly more than one fiber under the assumption of the statement. As the monodromy $\mud$ has to preserve these fibers by Proposition \ref{prop:obs}, the map $\mud$ cannot be surjective.

First, we claim that there exists a lattice triangle $T\subset \Delta$ such that $T \cap \partial \Delta \cap M = \lbrace \text{vertices of } T \rbrace$. To see this, consider a lattice triangle $T\subset \Delta$ obtained as the convex hull of three consecutive lattice points on $\partial \Delta$. If $T \neq \Delta$, then we are done. If $\Delta=T$ and $T$ does not satisfies the assumption of the claim, the convex hull of any other triple of consecutive and non-collinear lattice points on $\partial \Delta$ will. The claim follows.

Take now $T\subset \Delta$ to be a lattice triangle such that $T \cap \partial \Delta \cap M= \lbrace \text{vertices of } T \rbrace$. By construction, the vertices of $T$ are in $M_\Delta$. We claim that $\obs_{\partial\Delta} \big( (T\cap M) \setminus \lbrace \text{vertices of } T \rbrace\big)$ always contains $Q_\Delta \setminus 0$. To see this, take coordinates on $M$ such that $(0,0)$, $(1,0)$ and $(p, \alpha q)$ are the vertices of $T$ where $m, \, p,  \, q \in \Z_{>0}$. In particular, we have $M_\Delta= \Z \oplus \alpha \Z$ and $\obs_{\partial\Delta}(u,v)=\dist(v, \alpha \Z)$ when identifying $Q_\Delta \simeq \lbrace 0, 1, ..., \lfloor \alpha/2 \rfloor \rbrace$ (here, \emph{$\dist$} is the Euclidean distance in $\R^2$). The triangle $T$ contains exactly one lattice point $p_h$ on the union of the two horizontal lines of respective heights $h$ and $\alpha q-h$, for any integer $0<h<\alpha q$. Indeed, there is exactly one lattice point at height $h$ in the interior of the parallelogram $\conv\big(T,(p, \alpha q+1) \big)$. Since $\obs_{\partial\Delta}$ maps the point $p_h$ to $\dist(h, \alpha \Z)$ for any integer $0 < h < \alpha q$, the claim follows.
\end{proof}

\begin{Remark}
If $p_j \in X$ is the toric fixed point corresponding to the vertex $\Delta_{j,j+1}$, then $p_j$ is a singular point of $X$ if and only if $m_{j}:=\big[N: \left\langle v_j, \, v_{j+1} \right\rangle\big] \geqslant 2$. In such case, the point $p_j$ is a cyclic quotient singularity of $X$ of type $\C^2 \big/ (\Z/m_j \Z)$, see \cite[Chapter 2, \S 2.2]{Ful}. From there, this is not hard to see that $[M:M_\Delta]= \gcd\big(\{m_j\}_{j \in \Z / n\Z}\big)$. In particular, we have $\vert Q_\Delta \vert = \lfloor \gcd(\{m_j\}) /2 \rfloor +1$.
\end{Remark}

\section{Monodromy in weighted projective planes}\label{sec:maintool}

In this section, we focus on the simplest kind of polygons, namely triangles. We assume that the lattice polygon $\Delta \subset M_\R$ is a triangle such that $\length(\Delta_1) \geqslant 2$. In particular, the associated toric surface $\Xd$ is a weighted projective plane. We define the subset $\emph{\sv{\Delta,1}} \subset \sv{\Delta} \subset \LLd$ to be the space of curves intersecting the toric divisors $\D_2, \; \D_3 \subset \Xd$ only once. Since $\sv{\Delta,1} \subset \sv{\Delta}$, the points of intersection with $\D_2$ and $\D_3$ are necessarily disjoint from the three torus fixed points of $\Xd$.

Recall that if $C \in \sv{\Delta}$ is a rational simple Harnack curve, we can consider the monodromy map $\mud : \pi_1 \big(\sv{\Delta}, \rc \big) \rightarrow \Aut \big(\itrz(\Delta)\big)$.  According to Proposition \ref{prop:ratharnack}, the simple Harnack curve $\rc$ can be taken in $\sv{\Delta,1}$ by simply requiring that all the parameters $a_{2,l}$ (respectively $a_{3,l}$) are equal to each other. In this case, we can consider the map $\pi_1( \sv{\Delta,1}, \rc \big) \rightarrow \pi_1 \big(\sv{\Delta}, \rc \big)$ induced by the inclusion $\sv{\Delta,1} \hookrightarrow \sv{\Delta}$ and consider the composition of the latter with $\mud$ that we denote $\emph{\mudd} : \pi_1 \big(\sv{\Delta,1}, \rc \big) \rightarrow \Aut \big(\itrz(\Delta)\big)$.

Our interest in the subspace $\sv{\Delta,1} \subset \sv{\Delta}$ is motivated by the following. First, we will show in Theorem \ref{thm:triangle} that we can determine the image of $\mudd$ by explicit computation. Second, we will see in Section \ref{sec:patch} that this computation allows us to construct elements in the image of $\mud$ for general $\Delta$ via a patchworking procedure.

In order to state Theorem \ref{thm:triangle}, we need to introduce the relevant obstructions. Let $\emph{M_{\Delta,1}} \subset M$ be the affine lattice generated by $\Delta_1 \cap \Z^2$ and the vertex $\Delta_{2,3}$. Since $0\in M$ is a vertex of $\Delta$ by convention, the subset $M_{\Delta,1}$ is an honest lattice. Thus, we define $\emph{Q_{\Delta,1}}:=\big(M \big/ M_{\Delta,1}\big)\big/ \pm \id$. Finally, define the \emph{obstruction map} 
$$ \emph{\obs_{\Delta,1}} : \itrz(\Delta) \rightarrow Q_{\Delta,1}$$
to be the restriction of the quotient map $M \rightarrow Q_{\Delta,1}$. 

\begin{Theorem}\label{thm:triangle}
The image of $\mudd$ is the group of deck transformations of $ \obs_{\Delta,1}$.
\end{Theorem}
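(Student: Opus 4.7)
The plan is to prove the two inclusions separately. First, I would establish $\im(\mudd) \subseteq \Aut(\obs_{\Delta,1})$ by an obstruction argument in the spirit of Lemma \ref{lem:obsmap} and Proposition \ref{prop:obs}: for $C \in \sv{\Delta,1}$, the punctures of $\Cb$ contribute to $\pi(\cP)$ one class of the form $v_1^\perp$ per puncture over $\D_1$, together with the two classes $\ell_2 v_2^\perp$ and $\ell_3 v_3^\perp$ coming from the single punctures over $\D_2$ and $\D_3$, respectively (where $v^\perp$ denotes the $90^\circ$ rotation identifying $N$ with $M$ as in the proof of Proposition \ref{prop:obs}). These three generators span exactly the sublattice $M_{\Delta,1}$, so $\obs_{X,C}$ factors through $\obs_{\Delta,1} \circ \ord$ and $\mudd$ takes values in $\Aut(\obs_{\Delta,1})$.

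For the reverse inclusion $\Aut(\obs_{\Delta,1}) \subseteq \im(\mudd)$, I would exploit the explicit parametrization \eqref{eq:param}. Imposing $a_{2,l} \equiv a_2$ and $a_{3,l} \equiv a_3$, and then using $\PGL_2(\C)$ to fix $a_2 = 0$ and $a_3 = \infty$, I would identify $\sv{\Delta,1}$ (modulo the residual $\C^\ast$-action $t \mapsto \lambda t$) with an open subset of $\big(\mathrm{Conf}_{\ell_1}(\C^\ast) \times (\C^\ast)^2\big) / \C^\ast$. Its fundamental group contains two natural families of loops that project through the inclusion $\sv{\Delta,1} \hookrightarrow \sv{\Delta}$ to elements of $\im(\mudd)$: the positive half-twists $\sigma_l$ swapping consecutive parameters $a_{1,l}$ and $a_{1,l+1}$, and ``winding'' loops that move a single $a_{1,l}$ once around $0$.

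To identify the action of these loops on $\itrz(\Delta)$, I would anchor the analysis at a rational simple Harnack reference curve $\rc \in \sv{\Delta,1}$ with $a_{1,l} \in \R_{>0}$ placed in increasing order, whose existence is guaranteed by Proposition \ref{prop:ratharnack}. Using the amoeba $\A(\rc) \subset \R^2$, the $2$-to-$1$ property of $\A$ on $\Cb$, and the explicit order map of Definition \ref{def:ord}, I would track how each deformation $\sigma_l$ pushes the compact ovals of $\R\rc$ through a nodal degeneration and compute the resulting permutation of interior lattice points. I expect each $\sigma_l$ to act as a non-trivial involution supported on nodes belonging to a common fibre of $\obs_{\Delta,1}$, and each winding loop to realise a cyclic shift along a horizontal line of lattice points, so that collectively these monodromies exhaust $\Aut(\obs_{\Delta,1})$.

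The main obstacle will be this explicit combinatorial computation: tracking the monodromy of each braid generator on the nodes of $\rc$ in terms of boundary tentacles and ovals of $\A(\rc)$, carefully accounting for the $\pm\id$ ambiguity in the target of $\obs_{\Delta,1}$, and then checking that the resulting involutions and shifts jointly generate the full deck group. The simple Harnack scaffold and the bijection $\ord$ should make the computation tractable, but the book-keeping will involve a case analysis based on the $y$-residues of lattice points modulo $q := [M : M_{\Delta,1}]$.
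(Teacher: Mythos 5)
Your first inclusion $\im(\mudd)\subseteq\Aut(\obs_{\Delta,1})$ is correct and matches the paper's Corollary~\ref{cor:obs}: the punctures of $\Cb$ for $C\in\sv{\Delta,1}$ generate only the sublattice $N_1+\ell_2N_2+\ell_3N_3\subset N$, which corresponds to $M_{\Delta,1}$ under the identification of Proposition~\ref{prop:obs} (indeed $\Z v_1+\Z\ell_2v_2+\Z\ell_3v_3=\Z\oplus q\Z$ in the coordinates of Section~\ref{sec:maintool}), and the usual continuity argument of Lemma~\ref{lem:obsmap} applies.

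For the reverse inclusion your strategy diverges from the paper's, and this is where there is a real gap. The paper does not attempt to compute the monodromy of any specific loop. It instead shows (Proposition~\ref{prop:description}) that the nodes in $\obs_{\Delta,1}^{-1}(k)$ are in bijection with the zeroes of an explicit polynomial $P_{k,\{a\}}$, constructs the discriminants $\widetilde{\mD}_k\subset\Sym_\ell(\C^\ast)$, and proves (Proposition~\ref{prop:disc}, Lemma~\ref{lem:support}) that they are pairwise distinct irreducible hypersurfaces with supports $J_k$ generating $\Z$. This verifies the genericity hypothesis of \cite[Corollary 4]{EL2}, which then delivers surjectivity of the Galois action wholesale. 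Your programme of realising the deck group by braid half-twists of the $a_{1,l}$ and winding loops has two difficulties you do not resolve. First, a braid generator $\sigma_l$ (resp.\ a winding loop) is a loop in $\mathrm{Conf}_{\ell_1}(\C^\ast)$, not in $\sv{\Delta,1}$: along such a path the curve may acquire non-nodal singularities (it may cross some $\widetilde{\mD}_k$), so one must perturb, and different perturbations produce different homotopy classes in $\pi_1(\sv{\Delta,1})$ and hence different monodromies. Second, even after choosing a representative avoiding the discriminants, there is no a priori reason the monodromy of a half-twist should be an involution supported on a single fibre of $\obs_{\Delta,1}$, nor that a winding loop gives a cyclic shift; the action on the roots of $P_{k,\{a\}}$ is a global question and your amoeba-tracking sketch does not give a method to compute it. In fact the loops that naturally yield transpositions within a fibre of $\obs_{\Delta,1}$ are meridians of the discriminants $\widetilde{\mD}_k$, which are precisely the extra generators that $\pi_1(\sv{\Delta,1})$ acquires by removing those hypersurfaces from $\Sym_\ell(\C^\ast)$; showing that these meridians generate all of $\Aut(\obs_{\Delta,1})$ independently over $k$ is exactly the content of \cite[Corollary 4]{EL2}, and your proposal would essentially have to rederive that result from scratch.
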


In order to prove Theorem \ref{thm:triangle}, we fix a system of coordinates on $M$ such that the 
edge $\Delta_1$ is the segment joining $(0,0)$ to $(\ell,0)$ where $\ell := \length(\Delta_1)$ and 
such that the vertex $\Delta_2 \cap \Delta_3$ has coordinates $(p,q)$ for some $p, \, q \in 
\Z_{>0}$. We assume that $q\geqslant 2$, otherwise $\itrz(\Delta)= \varnothing$ and there is 
nothing to prove. Thus, we have that $M_{\Delta,1} = \Z \oplus q\Z$ and we can identify 
$Q_{\Delta,1} \simeq \big\lbrace 0,1,...,\lfloor q/2 \rfloor\big\rbrace$ and $\obs_{\Delta,1}(u,v)= 
\dist(v, q\Z)$, where we recall that $\dist$ is the Euclidean distance. Note that $0 \notin \im(\obs_{\Delta,1})$ and that the cardinality of each fiber $
\obs_{\Delta,1}^{-1}(k)$ usually depends on $k \in \big\lbrace1,...,\lfloor q/2 \rfloor\big
\rbrace$. It follows that the group of deck transformations of $\obs_{\Delta,1}$ is 
\[\prod_{k=1}^{\lfloor q/2 \rfloor} \Aut \big( \left\lbrace(u,v)\in\itr(\Delta) \cap M \; \big| \; v \in \{k, \, q-k\}  \right\rbrace \big).\]

To begin with, we describe the double points of any simple Harnack curve $C \in \sv{\Delta,1}$ within each fiber of the obstruction map $\obs_{\Delta,1}$. To that aim, we first observe in Lemma \ref{lem:param3} that the parametrisation \eqref{eq:param} admits a very simple form in the present setting. Using this, we then describe explicitly the elements $t\in \cp{1}$ parametrising the nodes of $C$ in Proposition \ref{prop:description}.

\begin{Lemma}\label{lem:param3}
Up to a toric translation in $\Xd$, any curve $C \in \sv{\Delta,1}$ admits a parametrisation of the form
\begin{equation}\label{eq:param3}
\phi_{\{a\}} \; : \; t \mapsto \Big(t^q, \; t^{-p} \prod_{ j=1}^{\ell} (t-a_{j})\Big)
\end{equation}
where $\{a\}:=\big\lbrace a_1,...,a_\ell\big\rbrace \in \Sym_\ell (\C^\ast)$.
\end{Lemma}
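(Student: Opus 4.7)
The plan is to specialise the general parametrisation \eqref{eq:param} by using the defining condition of $\sv{\Delta,1}$ to collapse the parameters attached to $\D_2$ and $\D_3$, then to fix the residual $\PGL_2(\C)$-action on the source $\cp{1}$ so as to place these two collapsed points at $\infty$ and $0$, and finally to absorb the remaining scalar constants by a toric translation on $\Xd$.

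First, I would read off the combinatorial data of $\Delta$ in the chosen coordinates. With vertices $(0,0)$, $(\ell,0)$, $(p,q)$, the primitive edge vectors are $v_1 = (1,0)$, $v_2 = ((p-\ell)/d_2,\, q/d_2)$ with $d_2 := \gcd(|p-\ell|,q) = \ell_2$, and $v_3 = (-p/d_3,\, -q/d_3)$ with $d_3 := \gcd(p,q) = \ell_3$. Using the convention $v_j = (\beta_j, -\alpha_j)$, one obtains $(\alpha_1, \beta_1) = (0,1)$ together with the key identities $\ell_2 \alpha_2 = -q$, $\ell_2 \beta_2 = p - \ell$, $\ell_3 \alpha_3 = q$, $\ell_3 \beta_3 = -p$.

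Second, I would interpret the condition that $C \in \sv{\Delta,1}$ meets each of $\D_2, \D_3$ at a single point: since the intersection multiplicity of $C$ with $\D_j$ equals $\ell_j$, the parametrisation \eqref{eq:param} must satisfy $a_{2,l} = a_2$ for all $l$ and $a_{3,l} = a_3$ for all $l$, for some $a_2, a_3 \in \cp{1}$. Moreover $a_2$, $a_3$ and the $a_{1,l}$ are pairwise distinct because $C$ avoids the torus fixed points of $\Xd$. Plugging these equalities into \eqref{eq:param} and substituting the exponents above, the parametrisation becomes
\begin{equation*}
\phi(t) = \Bigl( z_0 \, (t - a_2)^{-q} (t - a_3)^{q}, \; w_0 \, (t - a_2)^{p-\ell} (t - a_3)^{-p} \prod_{l=1}^{\ell} (t - a_{1,l}) \Bigr).
\end{equation*}

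Finally, I would use the $3$-transitivity of $\PGL_2(\C)$ on $\cp{1}$ to send $a_2 \mapsto \infty$ and $a_3 \mapsto 0$. The factors involving $a_2$ vanish from the formula thanks to the convention recorded just after \eqref{eq:param}, while the factors involving $a_3$ turn into $t^{q}$ and $t^{-p}$ respectively. Relabelling the transformed parameters $a_{1,l}$ (which remain in $\C^\ast$, since the original $a_{1,l}$ were distinct from both $a_2$ and $a_3$) as $a_l$, the parametrisation reduces to $(z_0 t^q,\; w_0 t^{-p} \prod_l (t - a_l))$, and the toric translation $(z, w) \mapsto (z_0^{-1} z,\; w_0^{-1} w)$ on $\Xd$ brings it to the desired form $\phi_{\{a\}}$. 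The only care required throughout is the routine bookkeeping of the exponents; I do not anticipate any substantive obstacle.
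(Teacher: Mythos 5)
Your proof is correct and follows essentially the same route as the paper: use the defining condition of $\sv{\Delta,1}$ to identify the parameters attached to $\D_2$ and $\D_3$, normalise them to $\infty$ and $0$ via $\PGL_2(\C)$, and absorb $(z_0,w_0)$ by a toric translation. The only difference is expository — you work out the exponents $\alpha_j,\beta_j$ and write the intermediate formula before applying $\PGL_2(\C)$, whereas the paper applies the change of coordinate first and states the resulting parametrisation directly.
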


Observe that we made a slight abuse of notation above while using regular braces in $\big\lbrace a_1,...,a_\ell\big\rbrace$. In general, $\{a\}$ will be a multi-set. However, we will stick to this notation for the rest of the paper.

\begin{proof}
By definition of $\sv{\Delta,1}$, the two points $C \cap \D_{2}$ and $C \cap \D_{3}$ are distinct. Up to the action of $\PGL_2(\C)$ by pre-composition on the parametrisation \eqref{eq:param}, we can assume that $C \cap \D_{2}$ is parametrised by $\infty$ and  $C \cap \D_{3}$ by $0$. By definition of $\sv{\Delta,1}$ again, all remaining parameters $a_{j,l}$ of \eqref{eq:param} are distinct from $0$ and $\infty$, otherwise $C$ would pass through a toric fixed point. Then, the parametrisation \eqref{eq:param} gives
\[ t \mapsto \Big( z_0 \, t^q , \; w_0 \, t^{-p} \prod_{j=1}^\ell (t-a_j) \Big).\]
We recover the announced formula after translating by $(z_0^{-1}, w_0^{-1})$ in $\Xd$.
\end{proof}

Let us now introduce the polynomials in the variable $t$ that we will use to describe the nodes of $C$. For any integer $k \in \big\lbrace 1,...,\lfloor q/2 \rfloor\big\rbrace$ such that $q\neq2k$ and any  $\{a\}:= \{a_1,...,a_\ell\} \in\Sym_\ell (\C^\ast)$, define 
\begin{equation}\label{eq:pol}
 \emph{P_{k,\{a\}}(t)}:=  \sum_{j=0}^\ell (-1)^{\ell-j}  \sigma_{\ell-j}(a_1,...,a_\ell) \cdot \sin\Big(\frac{k\pi(p-j)}{q}\Big)  \cdot \big(t e^{\frac{i\pi k}{q}}\big)^j
\end{equation}
where $\sigma_j$ is the elementary symmetric polynomial of degree $j$ in $\ell$ variables. In the case $q=2k$, the support of the above polynomial is the intersection of the affine sublattice $(p+1)+2\Z$ with $\{0,\cdots,\ell\}$. Recall that the \emph{support} is the set of exponents of the monomials appearing in the polynomial. For technical reason, we define instead the polynomial 
\begin{equation}\label{eq:pol2}
 \emph{P_{k,\{a\}}(t)}:=  \sum_{j=0}^\ell (-1)^{\ell-j}  \sigma_{\ell-j}(a_1,...,a_\ell) \cdot \sin\Big(\frac{k\pi(p-j)}{q}\Big)  \cdot \big(t e^{\frac{i\pi k}{q}}\big)^{\varepsilon(j)}
\end{equation}
where $\varepsilon(j)=(j-1)/2$ if $p$ is even and $\varepsilon(j)=j/2$ otherwise. For any $k \in \big\lbrace 1,...,\lfloor q/2 \rfloor\big\rbrace$, let us denote by $\emph{J_k} \subset \left\lbrace 0,...,\ell \right\rbrace$ the support of the polynomial $P_{k,\{a\}}$. We will study these supports in Lemma \ref{lem:support}.  Finally, define the multiplier $\emph{\lambda_k}\in \C^\ell$ whose coordinates $\lambda_{k,j}$ are defined by
\[
P_{k,\{a\}}(t)=  \sum_{j=0}^\ell (-1)^{\ell-j} \sigma_{\ell-j}(a_1,...,a_\ell) \cdot \lambda_{k,j} \cdot t^j.
\]

\begin{Proposition}\label{prop:description}
Let $C \in \sv{\Delta,1}$ be parametrised by $\phi:=\phi_{\{a\}}$ as in \eqref{eq:param3}. Then, we have the following bijective correspondence
\[
\begin{array}{rcl}
 \textbf{P} \; : \; \displaystyle \bigsqcup_{1\leq k \leq \lfloor q/2 \rfloor} \big\lbrace  t \in \C^\ast \; \big| \; P_{k, \{a\}}(t)=0 \big\rbrace & \longrightarrow & \big\lbrace \text{nodes of }\;  C \big\rbrace\\
 (k,t)   & \longmapsto & 
\left\lbrace
\begin{array}{ll}
\nu=\phi\big(\sqrt{t}\big)=\phi\big(-\sqrt{t}\big) & \text{if } q = 2k\\
\nu=\phi(t)=\phi\big(te^{2i\pi k/q}\big) & \text{otherwise}
\end{array}
\right. 
\end{array}.
\]
If $C$ is a simple Harnack curve, then $\textbf{P}\big(\big\lbrace  t \in \C^\ast \; \big| \; P_{k, \{a\}}(t)=0 \big\rbrace\big)=\big(\obs_{\Delta,1}\circ \ord\big)^{-1}(k)$. In particular, we have $\left| \obs_{\Delta,1}^{-1}(k)\right| = \left| \big\lbrace  t \in \C^\ast \; \big| \; P_{k, \{a\}}(t)=0 \big\rbrace \right| $.
\end{Proposition}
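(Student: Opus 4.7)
The plan is to work directly with the parametrisation $\phi = \phi_{\{a\}}$ of Lemma~\ref{lem:param3}, reduce the condition $\phi(t_1)=\phi(t_2)$ to the vanishing of $P_{k,\{a\}}$, and then identify the resulting fibers of $\textbf{P}$ with those of $\obs_{\Delta,1}\circ\ord$ via a direct winding-number computation.

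First, I would identify a node of $C$ with an unordered pair $\{t_1,t_2\}\subset\C^\ast$ satisfying $t_1\neq t_2$ and $\phi(t_1)=\phi(t_2)$. The first-coordinate identity $t_1^q=t_2^q$ forces $t_2=t_1\,e^{2\pi ik/q}$ for some $k\in\{1,\dots,q-1\}$, and swapping $t_1\leftrightarrow t_2$ exchanges $k$ with $q-k$, so I may restrict to $k\in\{1,\dots,\lfloor q/2\rfloor\}$. When $q\neq 2k$ the pair is determined by $t_1$; when $q=2k$ we have $t_2=-t_1$, so the pair is determined by $s:=t_1^2\in\C^\ast$. Substituting $t_2=t_1\,e^{2\pi ik/q}$ in the second-coordinate identity and expanding $\prod_j(t-a_j)=\sum_j(-1)^{\ell-j}\sigma_{\ell-j}\,t^j$, I obtain
\[
\sum_{j=0}^\ell (-1)^{\ell-j}\sigma_{\ell-j}\bigl(e^{2\pi ikp/q}-e^{2\pi ikj/q}\bigr)t_1^j=0.
\]
Applying the identity $e^{i\alpha}-e^{i\beta}=2i\,e^{i(\alpha+\beta)/2}\sin((\alpha-\beta)/2)$ factors out the nonzero constant $2i\,e^{i\pi kp/q}$ and leaves precisely $P_{k,\{a\}}(t_1)=0$ as in \eqref{eq:pol}. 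In the case $q=2k$, only indices $j$ of parity opposite to $p$ contribute, so up to a monomial factor $P_{k,\{a\}}$ is a polynomial in $s=t_1^2$: this is exactly what the reindexing in \eqref{eq:pol2} records. Bijectivity of $\textbf{P}$ then follows: for each $k$, the map $t_1\mapsto\{t_1,t_1 e^{2\pi ik/q}\}$ (or $s\mapsto\{\sqrt s,-\sqrt s\}$ when $q=2k$) is injective, and distinct $k,k'\in\{1,\dots,\lfloor q/2\rfloor\}$ give disjoint pairs unless $k+k'=q$, which only happens when $q=2k$ and is already absorbed into the $s$-parametrisation.

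To identify the fiber of $\obs_{\Delta,1}\circ\ord$ containing $\nu:=\phi(t_1)$, I would compute $\ord(\nu)$ using Definition~\ref{def:ord}: lift a short arc $\rho$ from $t_1$ to $t_2$ in $\cp{1}\setminus\{a_1,\dots,a_\ell,0,\infty\}$ to the loop $\gamma_\nu:=\phi(\rho)\subset\Cb$ based at $\nu$. The first coordinate $t^q$ winds exactly $k$ times along $\rho$, so the class of $\gamma_\nu$ in $H_1(\ttor,\Z)$ has the form $(k,\ast)$, whence $\ord(\nu)=(-\ast,k)$. The ambiguity from reselecting $\rho$ is the image in $H_1(\ttor,\Z)$ of $\pi_1\bigl(\cp{1}\setminus\{a_1,\dots,a_\ell,0,\infty\}\bigr)$; small loops around $a_j$, $0$, $\infty$ push forward to $(0,1)\in N_1$, $(q,-p)\in N_3$, and $(-q,p-\ell)\in N_2$ respectively, with $\Z$-span equal to $q\Z\oplus\Z$. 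Under the rotation $(a,b)\mapsto(-b,a)$ this lattice maps onto $M_{\Delta,1}=\Z\oplus q\Z$, so $\obs_{\Delta,1}(\ord(\nu))$ is well-defined and equals $\dist(k,q\Z)=k$, using $k\leq\lfloor q/2\rfloor$. Hence $\textbf{P}$ sends $\{P_{k,\{a\}}=0\}$ into $(\obs_{\Delta,1}\circ\ord)^{-1}(k)$; combined with the bijectivity of $\textbf{P}$ and the disjointness of fibers over distinct $k$, this containment is an equality. The final cardinality statement follows from Proposition~\ref{prop:extord}.

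The main obstacle is the clean algebraic reduction to $P_{k,\{a\}}$ together with the correct handling of the degenerate case $q=2k$, where the natural parameter collapses from $t_1$ to $s=t_1^2$ and the formula \eqref{eq:pol2} is designed precisely to absorb the resulting common monomial factor. A secondary technical point is verifying that the path ambiguity in the order computation lands exactly in $M_{\Delta,1}$, and not in some intermediate lattice strictly between $M_{\Delta,1}$ and $M_\Delta$, so that $\obs_{\Delta,1}(\ord(\nu))$ is well-defined without any further quotienting.
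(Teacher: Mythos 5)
Your proof is correct and follows essentially the same route as the paper's: the same reduction of $\phi(t_1)=\phi(t_2)$ to $P_{k,\{a\}}(t_1)=0$ via the sine identity, the same absorption of the $q=2k$ degeneration into the reindexed polynomial \eqref{eq:pol2}, the same disjointness/nodality argument for bijectivity, and the same winding-number computation (the paper phrases it via Cauchy's integral formula) showing $\obs_{\Delta,1}(\ord(\nu))=k$. Your explicit verification that the path ambiguity lands in $q\Z\oplus\Z$, hence in $M_{\Delta,1}$ after the quarter-turn, is a slightly more careful treatment of a point the paper passes over quickly, but it does not change the argument.
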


\begin{Corollary}\label{cor:obs}
The image of $\mudd$  is a subgroup of  $ \Aut(\obs_{\Delta,1})$.
\end{Corollary}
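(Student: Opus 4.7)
The plan is to derive the corollary from Proposition \ref{prop:description} by a continuity argument applied to the polynomials $P_{k,\{a\}}$. Proposition \ref{prop:description} provides, for any $C \in \sv{\Delta,1}$ with parametrisation $\phi_{\{a\}}$, a canonical partition of the node set of $C$ indexed by $k \in \{1,\ldots,\lfloor q/2 \rfloor\}$, and identifies this partition, on a simple Harnack curve, with the fiber partition of $\obs_{\Delta,1} \circ \ord$. It therefore suffices to show that along any loop in $\sv{\Delta,1}$ based at $C$, the monodromy preserves this $k$-indexed partition of the nodes.

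The first step is to verify that the partition is intrinsic to $C$, independent of the choice of parametrisation. The parametrisation of Lemma \ref{lem:param3} is unique up to the reparametrisation $t \mapsto \alpha t$ (after absorbing the induced toric translation), which sends $\{a\}$ to $\{a/\alpha\}$. A direct inspection of formulas \eqref{eq:pol} and \eqref{eq:pol2} shows that $P_{k,\{a/\alpha\}}(t)$ and $P_{k,\{a\}}(\alpha t)$ differ only by a common monomial factor in $\alpha$ that is independent of $k$; hence the zero sets $R_k(\{a\}):=\{t:P_{k,\{a\}}(t)=0\}$ are uniformly rescaled across all $k$, and the partition $\bigl\{\textbf{P}\bigl(k, R_k(\{a\})\bigr)\bigr\}_k$ of the node set of $C$ is therefore intrinsic.

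The second step is a standard continuity argument. Over any simply connected open $U \subset \sv{\Delta,1}$, one may choose a continuous family of parametrisations $\{a(C)\}_{C\in U}$. The polynomials $P_{k,\{a(C)\}}$ then depend continuously on $C$, and their roots in $\C^*$ vary continuously, remaining simple and bounded away from $0$ and $\infty$: a root escaping to $0$ or $\infty$ would send the associated node to a torus fixed point, while a collision of two roots would yield a non-nodal singularity, both contradicting $C \in \sv{\Delta,1}$. This produces, over $U$, a local splitting of the node covering of $\sv{\Delta,1}$ into $\lfloor q/2\rfloor$ sub-coverings indexed by $k$. Intrinsicality from the first step guarantees that these local splittings glue into a global clopen decomposition of the node covering, which the monodromy must preserve. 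Translating back via $\ord$, this yields $\im(\mudd) \subset \Aut(\obs_{\Delta,1})$.

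The main technical point is the intrinsicality of the partition: without it, local splittings coming from different chart choices might fail to agree, preventing the assembly of a global clopen decomposition. Once intrinsicality is confirmed through the explicit scaling behaviour of $P_{k,\{a\}}$, the standard continuity of simple roots of a continuous family of polynomials closes the argument.
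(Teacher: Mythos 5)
Your argument is correct, but it takes a genuinely different route from the paper. The paper disposes of this corollary in one line by saying the proof is ``similar to the proof of Lemma \ref{lem:obsmap}'': for each curve $C_\theta$ in $\sv{\Delta,1}$ along a loop one has the intrinsically defined homological obstruction (the class of a vanishing cycle $\gamma_\nu$ modulo the sublattice of $H_1(\Xb,\Z)$ generated by the punctures of $\Cb_\theta$, which for curves meeting $\D_2$ and $\D_3$ only once is exactly the rotated copy of $M_{\Delta,1}$ rather than of $M_\Delta$), and continuity of this map in $\theta$ forces the monodromy to preserve its fibers. You instead work entirely through the explicit machinery of Proposition \ref{prop:description}: you track the roots of the polynomials $P_{k,\{a\}}$ along the loop, after first checking that the resulting $k$-indexed partition of the nodes is independent of the residual reparametrisation $t\mapsto\alpha t$ of Lemma \ref{lem:param3} (your scaling computation is right; note only that in the case $q=2k$ the roots of $P_{k,\{a\}}$ rescale by $\alpha^{-2}$ rather than $\alpha^{-1}$, which is still consistent since $\textbf{P}$ takes $\sqrt{t}$ there, and that a root escaping to $0$ or $\infty$ would push a node onto a toric divisor rather than onto a torus fixed point --- either way contradicting membership in $\sv{\Delta,1}$). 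The paper's approach is softer and shorter, and makes transparent why the relevant lattice is $M_{\Delta,1}$; yours is more computational but self-contained given Proposition \ref{prop:description}, and it is closer in spirit to how Theorem \ref{thm:triangle} is subsequently proved by permuting the roots of the $P_{k,\{a\}}$ independently. Your emphasis on the intrinsicality of the partition is well placed: that is precisely the point that must be checked for the local splittings of the node covering to glue.
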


\begin{proof}
The proof is similar to the proof of Lemma \ref{lem:obsmap}.
\end{proof}

\begin{proof}[Proof of Proposition \ref{prop:description}]
By assumption, the curve $C$ has only nodes as singularities. Any node $\nu \in C$ corresponds to an unordered pair $\{t, \, t'\} \subset \C^\ast$ such that $\phi(t)=\phi(t')$. By \eqref{eq:param3}, the latter equality is equivalent to 
\[ \big( t \big/ t'\big)^q= 1 \;  \text{ and } \; \prod_{j=1}^\ell\dfrac{t-a_j}{t'-a_j} = \big( t \big/ t'\big)^p.\]
In particular,  there exists a unique $k \in \big\lbrace 1,...\lfloor q/2\rfloor\big\rbrace$ such that $\big\lbrace t \big/ t', \; t' \big/ t \big\rbrace = \big\lbrace e^{2i\pi k/q}, \; e^{-2i\pi k/q} \big\rbrace$. If $q\neq 2k$, there exists a unique $t \in \C^\ast$ such that $\phi(t)=\phi\big(t e^{2i\pi k/q}\big)=\nu$. Moreover, the parameter $t$ satisfies 
\[
\begin{array}{l}
\displaystyle \prod_{j=1}^\ell \; \dfrac{t-a_j}{t \, e^\frac{2i\pi k}{q}-a_j} = e^{-\frac{2i\pi kp}{q}} \;  \Leftrightarrow	 \; \displaystyle \prod_{j=1}^\ell \big(t-a_j\big) - e^{-\frac{2i\pi kp}{q}} \, \prod_{j=1}^\ell \Big(t\, e^\frac{2i\pi k}{q}-a_j\Big) = 0 \\ \\
\Leftrightarrow \displaystyle \prod_{j=1}^\ell \Big( s \, e^{-\frac{i\pi k}{q}} -a_j \Big) - e^{-\frac{2i\pi kp}{q}} \, \prod_{j=1}^\ell \Big(s \, e^\frac{i\pi k}{q}-a_j \Big) = 0 \quad \big(\text{where } s:= t\, e^\frac{i\pi k}{q} \big) \\ \\
 \Leftrightarrow \displaystyle  \sum_{j=0}^\ell (-1)^{\ell-j} \cdot \sigma_{\ell-j}(a_1,...,a_\ell) \cdot s^j \cdot \Big(e^{-\frac{i\pi kj}{q}}- e^\frac{i\pi k(j-2p)}{q} \Big) =0 \\ \\
\Leftrightarrow \displaystyle \sum_{j=0}^\ell (-1)^{\ell-j} \sin\left(\dfrac{k\pi(p-j)}{q}\right) \cdot  \sigma_{\ell-j}(a_1,...,a_\ell) \cdot s^j =0 \; \Leftrightarrow \; P_{k, \{a\}}(t)=0,
\end{array}  \]
where the penultimate equivalence is obtained after multiplying both sides by $\big(1/2i\big)\,e^{\frac{i\pi kp}{q}}$. Conversely, for any root $t$ of $P_{k, \{a\}}$, we have that $\phi(t)=\phi\big(t e^{2i\pi k/q}\big)$. Therefore, $\phi(t)$ is a node of $C$. The case $q=2k$ is similar, except that the multiplication by $e^{2i\pi k/q}=-1$ is an involution so that we cannot distinguish $t$ and $t'$ as above. This is why we adapted the definition of $P_{k, \{a\}}$ in \eqref{eq:pol2}.

We showed above that  $\textbf{P}$ is surjective and that its restriction to  $\big\lbrace  t \in \C^\ast \; \big| \; P_{k, \{a\}}(t)=0 \big\rbrace$ is injective for each $k$. If there exists a node $\nu \in C$ parametrised by zeroes of  $P_{k, \{a\}}$ and $P_{k', \{a\}}$ for $k\neq k'$, then $\nu$ admits at least four preimages by $\phi$. This is a contradiction with the fact that $\nu$ is a node. We deduce that $\textbf{P}$ is a bijection.

Assume now that $C$ is a rational simple Harnack curve and consider a node $\nu\in C$ parametrised by a root $t'$ of the polynomial $P_{k,\{a\}}$. We need to prove that $\obs_{\Delta,1}\big(\ord(\nu)\big)=k$. To do so, recall from Definition \ref{def:ord} that $\ord(\nu)=(-b,a)$ where $(a,b) \in  H_1\big(\ttor,\Z\big)$ is the homology class of a certain loop in $\Cb$ given as the image by $\phi$ of a path $\varrho : [0,1] \rightarrow \cp{1}$ invariant by complex conjugation and joining $t'$ to $t'e^{2i\pi k/q}$.
Denoting $\phi:=(\phi_1,\phi_2)$, we deduce from Cauchy's integral formula that 
\[
\begin{array}{rl}
\displaystyle \pm a & = \quad \displaystyle \frac{1}{2i\pi}  \int_{\phi_1\circ \varrho} \frac{dz}{z} \quad  = \quad  \frac{q}{2i\pi} \int_{\varrho} \frac{d t}{t}
\\
\\
& \displaystyle = \quad \frac{q}{2i \pi}  \log\big(t'e^{2i\pi k/q} \big/ t'\big) \quad = \quad k.
\end{array}
\]
From the definition $\obs_{\Delta,1}(u,v)= \dist(v, q\Z)$, we deduce that $\obs_{\Delta,1}\big(\ord(\nu)\big)= d(a, q\Z)= d(\pm k, \alpha\Z)=k$. This concludes the proof.
\end{proof}
 
\begin{Remark} For a rational simple Harnack curve $C \in \sv{\Delta,1}$, we can actually show that all the roots of $P_{k,\{a\}}$ belong to the punctured line $e^{-i\pi kp/q} \R^\ast \subset \C^\ast $ and that their distribution on the connected components of the punctured line corresponds to the distribution of the points of $ \; \obs_{\Delta,1}^{-1}(k)\subset \itrz(\Delta)$ on the heights $k$ and $q-k$, see Figure \ref{fig:order}.
\end{Remark}

\begin{figure}[h]
\begin{center}
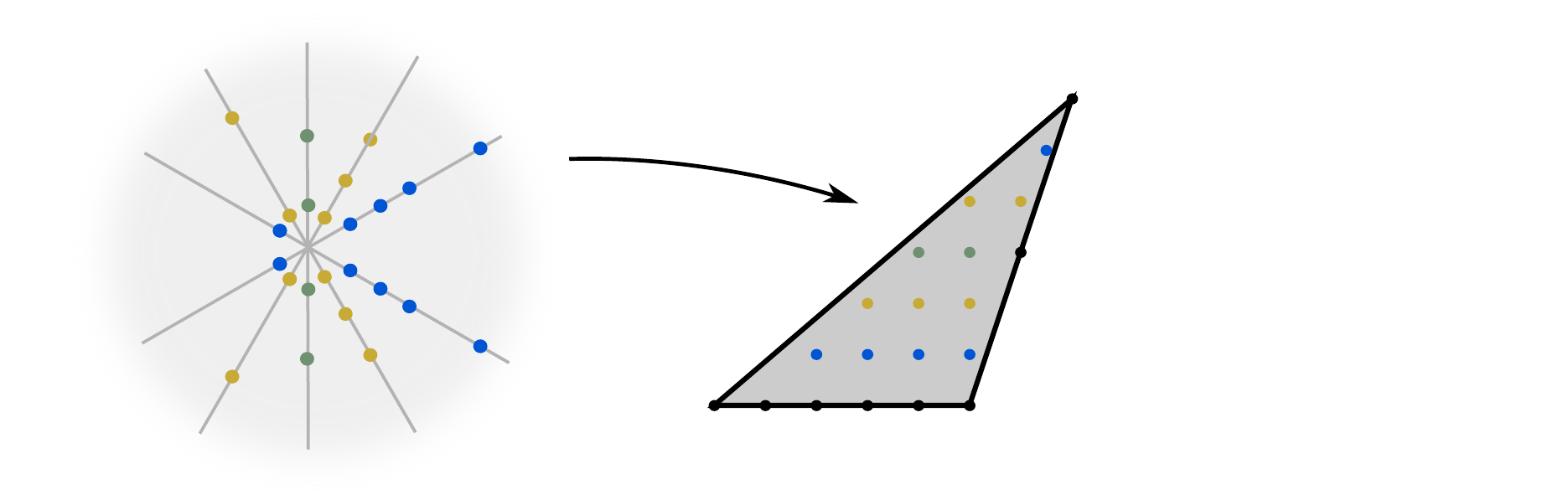
\end{center}
\caption{The correspondence  $\textbf{P}$ for $\Delta:=\conv\big(\big\lbrace (0,0), \; (5,0), \; (7,6)\big\rbrace\big)$ and  the simple Harnack curve $C \in \sv{\Delta,1}$ parametrised by $\phi_{\{1\}}(t)=\big(t^6,(t-1)^5/t^7\big)$. On the left, we represented by coloured dots the parameters $t\in \cp{1}$ mapped to nodes of $C$ by $\phi_{\{1\}}$. Each node of $C$ corresponds to a pair of complex-conjugated points in the $t$-space. The roots of $P_{k,\{1\}}$ are distributed on the punctured line $e^{-i\pi k7/6} \R^\ast$ for $k\in \big\lbrace1,2,3\big\rbrace$. In the center, we represented the lattice polygon $\Delta$ together with its interior points. Finally, we represented the set $Q_{\Delta,1}$ on the right. The colouring of the points in $\cp{1}$ and $\itrz(\Delta)$  is preserved under the maps $\ord \; \circ \; \phi_{\{1\}}$ and $\obs_{\Delta,1}$.}
\label{fig:order}
\end{figure}

Under the correspondence $\textbf{P}$ of Proposition \ref{prop:description}, Theorem \ref{thm:triangle} amounts to the fact that we can permute the roots of the polynomials $P_{k,\{a\}}$ independently on $k$ while moving in the space of parameters $\{a\}$. In order to do so, we will study the discriminants of the polynomials $P_{k,\{a\}}$ and show that we can apply \cite[Corollary 4]{EL2}.

\begin{Definition}
For any $k \in \big\lbrace 1,...,\lfloor q /2\rfloor\big\rbrace$, define the discriminant $\emph{\widetilde\mD_k} \subset \Sym_\ell\big(\C^\ast\big)$ as the set of parameters $\{a\}:=\{a_1,...,a_\ell\}$ for which the polynomial $P_{k,\{a\}}$ has a multiple root in $\C^\ast$.
\end{Definition}

Let us describe the above discriminants. Recall from \cite[Ch.4, \S 2.D, Proposition 2.7]{GKZ} that the map expressing the coefficients of a univariate polynomial in terms of its roots, namely 
\[
\begin{array}{rcl}
\mR : \Sym_\ell\big(\C^\ast\big) &\rightarrow& \C^{\ell+1}\\
\{a\}&\mapsto & \sum_{0\leq j \leq \ell}\;  (-1)^{j}\sigma_{j}(\{a\})\cdot  t^{\ell-j}
\end{array}
\]
is an isomorphism onto its image in $\C^\ast \times \C^{\ell-1}\times \{1\}$. For a fixed non-empty support $J \subset \big\lbrace 0,...,\ell\big\rbrace$, consider the projection onto the space of univariate polynomials with support contained in $J$
\[
\begin{array}{rcl}
\mP_J \; : \; \C^{\ell+1}  	& \rightarrow		& \C^J \subset \C^{\ell+1} \\
\sum_{0\leq j \leq \ell}\; c_j\cdot t^j & \mapsto	&\sum_{0\leq j \leq \ell}\; \mathds{1}_{J}(j) c_j\cdot t^j
\end{array}
\]
where $\mathds{1}_{J}$ is the indicator function of $J \subset\big\lbrace 0,...,\ell \big\rbrace$. We can therefore describe $P_{k, \{a\}}$ by 
$$P_{k, \{a\}}=\big(\lambda_k \circ \mP_{J_k} \circ  \mR\big)(\{a\})$$ 
for any $k \in \big\lbrace 0,1,...,\lfloor q /2\rfloor\big\rbrace$, where $\lambda_k$ refers abusively to the coordinates-wise multiplication by the multiplier $\lambda_k$. In turn, this allows to describe the discriminants $\widetilde \mD_k$ as follows. Denote $\mD_J$ the $J$-discriminant in the space of univariate polynomials with support $J$, see \cite[Ch.9, \S 1.A]{GKZ}. Then, we have
\begin{equation}\label{eq:disc}
\mD_k=\Big(\lambda_k \circ \mP_{J_k} \circ  \mR\Big)^{-1}\big(\mD_{J_k}\big).
\end{equation}

\begin{Proposition}\label{prop:disc}
The discriminant $\widetilde \mD_k \subset \Sym_\ell\big(\C^\ast\big)$ is empty if and only if the fiber $\obs_{\Delta,1}^{-1}(k)$ has at most one element.
The non-empty discriminants $\widetilde\mD_k$, $k \in \big\lbrace 0,1,...,\lfloor q /2\rfloor\big\rbrace$, are pairwise distinct irreducible hypersurfaces.
\end{Proposition}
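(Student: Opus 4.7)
The approach is to use the factorisation $\widetilde\mD_k = (\lambda_k \circ \mP_{J_k} \circ \mR)^{-1}(\mD_{J_k})$ from \eqref{eq:disc} and reduce both assertions to known properties of the classical $J$-discriminant $\mD_{J_k}$ inside the coefficient space $\C^{J_k}$.

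For the emptiness criterion, I would start from the standard observation that a univariate Laurent polynomial with support $J \subset \Z$ of cardinality at most two has only simple roots in $\C^\ast$ (its nonzero roots are the $(\max J - \min J)$-th roots of a single nonzero ratio), so $\mD_J = \emptyset$ in that range, and consequently $\widetilde\mD_k = \emptyset$. Conversely, once $|J_k| \geq 3$, the $J_k$-discriminant is a nonempty irreducible hypersurface in $\C^{J_k}$ by \cite[Ch.9, \S 1.A]{GKZ}, and I would verify that it meets the image of $\lambda_k \circ \mP_{J_k} \circ \mR$. This image is an open subset of a coordinate-affine subspace of $\C^{J_k}$ (obtained by fixing $c_\ell = \lambda_{k,\ell}$ when $\ell \in J_k$ and restricting $c_0$ to $\lambda_{k,0}\cdot\C^\ast$ when $0 \in J_k$), and the intersection is nonempty by a dimension count exploiting the $\C^\ast$-invariance of $\mD_{J_k}$ under rescaling of the leading coefficient. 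To close the equivalence, I would translate $|J_k|$ into $|\obs_{\Delta,1}^{-1}(k)|$ by combining the explicit description $J_k = \{j \in \{0,\ldots,\ell\} : k(p-j) \not\equiv 0 \pmod q\}$ (read off from \eqref{eq:pol}) with the node count in Proposition \ref{prop:description}.

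For irreducibility, since $\mR$ is an algebraic isomorphism onto its image and $\mP_{J_k}$ is a coordinate projection, the composition becomes, after identifying $\Sym_\ell(\C^\ast)$ with $\{c_\ell = 1,\, c_0 \in \C^\ast\} \subset \C^{\ell+1}$, a linear projection onto a coordinate-affine subspace of $\C^{J_k}$. Its generic fibres are affine spaces parametrised by the unconstrained elementary symmetric functions $\sigma_m$ for $m \notin \{\ell - j : j \in J_k\}$, hence irreducible of constant dimension, and pulling back the irreducible hypersurface $\mD_{J_k}$ yields the irreducible hypersurface $\widetilde\mD_k$. For pairwise distinctness, I would pick a generic $\{a\} \in \widetilde\mD_k$ and argue that for $k' \neq k$ the polynomial $P_{k',\{a\}}$ stays reduced: since the multipliers $\lambda_k$ and $\lambda_{k'}$ define genuinely different linear forms on the space of elementary symmetric functions, the condition of having a double root transported along one is not generically implied by having one along the other.

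The main obstacle is the irreducibility step, which relies on showing that the restriction of $\mP_{J_k}$ to the image of $\mR$ has irreducible generic fibres after intersecting with the open condition $c_0 \neq 0$ — a quasi-affine analogue of the statement that the complement of a coordinate hyperplane in an affine space remains connected. A secondary difficulty is the pairwise distinctness when different values of $k$ yield the \emph{same} support $J_k = J_{k'}$; in that case the argument above must be supplemented by a direct comparison of the pulled-back discriminants through the distinct scaling vectors $\lambda_k$ and $\lambda_{k'}$.
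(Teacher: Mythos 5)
Your framework — reducing everything to properties of $\mD_{J_k}$ via the factorisation \eqref{eq:disc} — matches the paper's, and the emptiness and irreducibility halves are essentially right, with the caveat that the explicit description $J_k=\{j: k(p-j)\not\equiv 0\ \mathrm{mod}\ q\}$ is only valid when $q\neq 2k$; the paper routes both cases through Lemma~\ref{lem:support} instead, which also cleanly handles the non-consecutive two-element support case that your dichotomy ``$|J|\le 2$ vs.~$|J_k|\ge 3$'' does not immediately distinguish from the consecutive one. You also identify the quasi-affine-fibre subtlety in the irreducibility step, but this is in fact the easy part: the fibres of the coordinate projection restricted to $\{c_\ell=1,\,c_0\neq 0\}$ are products of affine lines with possibly one $\C^\ast$ factor, hence irreducible, and the preimage of an irreducible hypersurface is irreducible — the paper dispatches this in one sentence.

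The genuine gap is in the pairwise distinctness, which you flag only as a ``secondary difficulty'' but is actually the substantial step. Asserting that $\lambda_k$ and $\lambda_{k'}$ ``define genuinely different linear forms'' is not enough, and is not even justified in your sketch. The paper's proof of distinctness needs three specific inputs that are absent from your proposal: (i) linear \emph{independence} of the multipliers $\lambda_k,\lambda_{k'}$ when $J_k=J_{k'}$, which is deduced from the bijectivity of $\mathbf{P}$ in Proposition~\ref{prop:description} (for a general $\{a\}$ the roots of $P_{k,\{a\}}$ and $P_{k',\{a\}}$ must be disjoint since they parametrise distinct nodes, so the two polynomials cannot be proportional); (ii) the fact that the $J$-discriminant $\mD_{J}$ is \emph{not} invariant under coordinate-wise multiplication by any $\lambda\neq(1,\dots,1)$, which the paper extracts from the explicit parametrisation of $\mD_J$ by polynomials $\sum_{j\in J}c_j\,c^j\,t^j$ singular at $t=1$ and the linear equations $\sum c_j=\sum j\,c_j=0$; and (iii) when $J_k\neq J_{k'}$, a separate argument — the paper shows $\mR(\widetilde\mD_{k'})$ contains every coordinate line in direction $j\in J_k\setminus J_{k'}$ while $\mR(\widetilde\mD_k)$ does not. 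Your ``pick a generic point of $\widetilde\mD_k$ and check $P_{k',\{a\}}$ stays reduced'' is the right conclusion, but without (i)--(iii) there is no proof that the generic point behaves this way; in particular nothing in your sketch rules out that the two discriminants coincide when the supports agree.
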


In order to prove the above statement, we need to study the supports of the polynomials $P_{k, \{a\}}$ since we aim to apply \cite[Corollary 4]{EL2}.

\begin{Lemma}\label{lem:support}
For any $k \in \big\lbrace 1,...,\lfloor q /2\rfloor\big\rbrace$ such that $\obs_{\Delta,1}^{-1}(k)$ is non-empty, the support $J_k \subset \big\lbrace 0,...,\ell\big\rbrace$ generates $\Z$ as an affine lattice.
\end{Lemma}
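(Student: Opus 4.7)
The plan is to read off the support $J_k$ directly from \eqref{eq:pol} and \eqref{eq:pol2}, and then to show it contains two consecutive integers: this immediately forces $\gcd(J_k - J_k) = 1$, i.e.\ that $J_k$ affinely generates $\Z$.

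First, I would treat the case $q \neq 2k$. From \eqref{eq:pol}, the coefficient of $t^j$ in $P_{k,\{a\}}$ is nonzero for generic $\{a\}$ iff $\sin(k\pi(p-j)/q) \neq 0$, i.e.\ iff $q \nmid k(p-j)$. Setting $q' := q/\gcd(k,q)$, this yields $J_k = \{j \in \{0,\ldots,\ell\} : j \not\equiv p \pmod{q'}\}$. A key observation is that $q' \geq 3$, since $q' = 2$ would force $k = q/2$, contradicting $q \neq 2k$. In the case $q = 2k$, the factor $\sin(\pi(p-j)/2)$ vanishes exactly when $p-j$ is even, and the $\varepsilon$-substitution in \eqref{eq:pol2} converts the surviving arithmetic progression of step $2$ in $\{0, \ldots, \ell\}$ into a contiguous range $\{0, 1, \ldots, m\}$, which affinely generates $\Z$ whenever $m \geq 1$.

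Next, I would use the hypothesis $\obs_{\Delta,1}^{-1}(k) \neq \varnothing$ to force the existence of a consecutive pair in $J_k$ in the case $q \neq 2k$. The horizontal slice of $\itr(\Delta)$ at a height $v \in \{k, q-k\}$ has width $\ell(q-v)/q \geq \ell/2$, so an interior lattice point at that height yields a lower bound on $\ell$. Since $J_k$ omits only the single residue class $p \pmod{q'}$ and $q' \geq 3$, any window of $q'$ consecutive integers inside $\{0, \ldots, \ell\}$ whose excluded position sits at the boundary (rather than the interior) of the window provides two consecutive elements of $J_k$; a counting argument together with the lower bound on $\ell$ produces such a window.

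The main obstacle will be the extremal configurations where $\ell$ is minimal (typically $\ell = 2$) and the excluded residue lands in the interior of the only available window, causing $J_k$ to reduce to an arithmetic progression of step $d \geq 2$. In those cases, I expect either to rule the configuration out by a finer inspection of when $\obs_{\Delta,1}^{-1}(k)$ is non-empty, or to absorb the deficit through the change of variables $s := t^d$ with $d = \gcd(J_k - \min J_k)$, replacing $P_{k,\{a\}}$ by a polynomial whose support does affinely generate $\Z$ --- an operation compatible with the subsequent appeal to \cite[Corollary 4]{EL2}.
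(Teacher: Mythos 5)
Your computation of the supports matches the paper's: for $q\neq 2k$ the complement of $J_k$ in $\{0,\dots,\ell\}$ is the single residue class $j\equiv p \pmod{q'}$ with $q'=q/\gcd(k,q)\geqslant 3$, and for $q=2k$ the re-indexing by $\varepsilon$ turns the support into a contiguous block $\{0,\dots,m\}$ with $m\geqslant 1$ because $m$ equals the number of roots of $P_{k,\{a\}}$, hence the cardinality of $\obs_{\Delta,1}^{-1}(k)$, by Proposition \ref{prop:description}. Up to that point you are doing exactly what the paper does. The gap is in the case $q\neq 2k$. First, the intermediate claim that a non-empty fiber ``yields a lower bound on $\ell$'' is false: the horizontal slice of $\itr(\Delta)$ at height $v$ is an open segment of length $\ell(q-v)/q$, and an open segment of arbitrarily small length may still contain a lattice point, so non-emptiness of $\obs_{\Delta,1}^{-1}(k)$ forces nothing beyond $\ell\geqslant 2$ (also, the width is $\geqslant \ell/2$ only at height $k$, not at height $q-k$). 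Second, and more importantly, the extremal configuration you flag as ``the main obstacle'' is precisely where the lemma has content, and you leave it unresolved, listing two possible strategies without executing either. Since the excluded indices form an arithmetic progression of step $q'\geqslant 3$, a short count shows that $J_k$ misses a pair of consecutive integers only when $\ell=2$ and the excluded index is $j=1$, i.e.\ $J_k=\{0,2\}$; a complete proof must dispose of exactly this case.

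The paper's route through this point is your first proposed fix: it asserts that in the configuration $\ell=2$, $p\equiv 1\pmod{q'}$ the polynomial $P_{k,\{a\}}$ has no roots in $\C^\ast$, so that $\obs_{\Delta,1}^{-1}(k)=\varnothing$ by Proposition \ref{prop:description} and the hypothesis of the lemma fails. Before adopting this, test it on $\Delta=\conv\big(\{(0,0),(2,0),(1,3)\}\big)$ with $k=1$: there $q'=3$, $p=1$, the support is $J_1=\{0,2\}$ (not a singleton), $P_{1,\{a\}}$ is a binomial $c_0+c_2s^2$ with two roots in $\C^\ast$, and $\obs_{\Delta,1}^{-1}(1)=\{(1,1),(1,2)\}$ is non-empty. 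So the configuration cannot simply be ruled out by non-emptiness of the fiber, and your second strategy --- substituting $s\mapsto s^d$ with $d=\gcd(J_k-\min J_k)$ and tracking carefully what \cite[Corollary 4]{EL2} then yields for the monodromy of the original roots --- appears to be the one that actually has to be made to work. In short: same approach as the paper, but the critical case is left open, and the auxiliary bound you invoke to avoid it does not hold.
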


\begin{proof}
Assume first that $q\neq 2k$. An element $j\in \big\lbrace 0,...,\ell\big\rbrace$ is in the complement of $J_k$ if and only if $\sin\big(\frac{2i\pi k(p-j)}{q}\big)=0$, if and only if $$ \textstyle pk \equiv  jk \; mod \;  q \; \Leftrightarrow \; \frac{pk}{\gcd(q,k)} \equiv  \frac{jk}{\gcd(q,k)}  \; mod \;  \frac{q}{\gcd(q,k)} \; \Leftrightarrow \; p \equiv  j \; mod \;  \frac{q}{\gcd(q,k)} .$$ 
Since $0\leq k \leq \lfloor q/2 \rfloor$ and $q\neq 2k$, the integer $\frac{q}{\gcd(q,k)}$ is at least $3$. Then, the support $J_k$ has no consecutive elements only if $\frac{q}{\gcd(q,k)}=3$, $\ell=2$ and $p \equiv 1 \; mod \; 3$ (recall that $\ell\geqslant 2$ by assumption). In particular, the support $J_k$ is the singleton $\{1\}$, the polynomial $P_{k,\{a\}}$ has no roots in $\C^\ast$ and $ \obs_{\Delta,1}^{-1}(k)$ is therefore empty, according to Proposition \ref{prop:description}.

Assume now that $q=2k$. It follows from the definition of $P_{k,\{a\}}$ that the support $J_k$ is $\{0,\cdots,\lfloor(\ell-1)/2\rfloor\}$ if $p$ is even and $\{0,\cdots,\lfloor\ell/2\rfloor\}$ if $p$ is odd. It implies that the number of roots of $P_{k,\{a\}}$, which is the cardinality of $ \obs_{\Delta,1}^{-1}(k)$, is given by the greatest integer in the support. The result follows.
\end{proof}

Before proving Proposition \ref{prop:disc}, let us gather some elementary observations on the discriminants $\widetilde \mD_k$. Recall that a general polynomial in $\mD_J$ is of the form $\sum_{j\in J} c_j \cdot c^j\cdot t^j$ where $c \in \C^\ast$ and $\sum_{j\in J} c_j \cdot t^j$ is singular at $t=1$. Since polynomials singular at $t=1$ are described by the equations
\[\sum_{j\in J} c_j  =  \sum_{j\in J} j \cdot c_j =0\]
we recover that $\mD_J$ is empty if $\vert J \vert \leq 2$ and that it is a reduced, irreducible algebraic hypersurface otherwise. It also implies that $\mD_J$ is not invariant under multiplication by a multiplier  $\lambda \in \big(\C^\ast\big)^J$ unless $\lambda=(1,...,1)$.

\begin{proof}[Proof of Proposition \ref{prop:disc}]
According to the description \eqref{eq:disc}, the discriminant $\widetilde \mD_k \subset \Sym_\ell\big(\C^\ast\big)$ is empty if and only if the discriminant $\mD_{J_k}$ is empty, if and only if $\vert J_k \vert \leq 2$. There are two cases: either $J_k=\big\lbrace j, j+1 \big\rbrace$ and then $\vert \obs_{\Delta,1}^{-1}(k)\vert =1$ by Proposition \ref{prop:description}, or $J_k$ does not generate $\Z$ in which case $\vert \obs_{\Delta,1}^{-1}(k)\vert \leq 1$ by Lemma \ref{lem:support}. 
This proves the first part of the statement. 

The fact that any non-empty discriminant is an irreducible hypersurface follows also from \eqref{eq:disc}. Indeed, the map $\lambda_k \circ \mP_{J_k} \circ  \mR$ is the composition of an isomorphism  with a linear map and the discriminants $\mD_{J_k}$ are irreducible hypersurfaces in this case.

Let us now show that if $\widetilde\mD_k$ and $\widetilde\mD_{k'}$ are non-empty and  $k\neq k'$, then $\widetilde\mD_k$ and $\widetilde\mD_{k'}$ are distinct. Again, we will distinguish two cases: either the supports $J_k$ and $J_{k'}$ coincide or they do not. Assume first that $J_k=J_{k'}$. By the description \eqref{eq:disc}, the equality $\mD_k=\mD_{k'}$ is equivalent to the $J_k$-discriminant $\mD_{J_k}$ being invariant under multiplication by $\lambda:= \lambda_k \big/ \lambda_{k'} \in (\C^\ast)^{J_k}$. By Proposition \ref{prop:description}, the roots of the polynomial $P_{k,\{a\}}$ are distinct from the root of $P_{k',\{a\}}$ for general $\{a\}$. It implies that the multipliers $\lambda_k$ and $\lambda_{k'}$ are linearly independent and then that $\lambda\neq (1,...,1)$. By the above discussion, the discriminant $\widetilde\mD_k$ is not invariant under multiplication by $\lambda$. It follows that $\widetilde\mD_k$ and $\widetilde\mD_{k'}$ are distinct.

Assume at last that $J_k\neq J_{k'}$ and take $j \in J_k \setminus J_{k'}$. For any point $ \textbf{c} \in \mR\big(\widetilde\mD_{k'}\big)$, the set $\mR\big(\widetilde\mD_{k'}\big)$ contains the line $\textbf{c} + \C^{\{j\}}$, where $\C^{\{j\}}$ is the $j^{th}$ coordinate line. However, we easily see from the above description of the $J$-discriminants  that $\mR\big(\widetilde\mD_{k}\big)$ does not contain such a line since $j\in J_{k}$. It follows that $\widetilde\mD_k$ and $\widetilde\mD_{k'}$ are distinct and the statement follows.
\end{proof}
\begin{proof}[Proof of Theorem \ref{thm:triangle}]
Let us show that we are in position to apply \cite[Corollary 4]{EL2}. First, denote by  $L=(L_1,\cdots L_{\lfloor q/2\rfloor}): \C^{J_1}\times \cdots \times \C^{J_{\lfloor q/2\rfloor}}$ the linear map with coordinate $L_k:= \lambda_k \circ \mP_{J_k}$ and denote also $\mD_k\subset \C^\ell$ the preimage under $L_k$ of the $J_k$-discriminant. In order to apply \cite[Corollary 4]{EL2}, each support $J_1, \cdots,J_{\lfloor q/2\rfloor}$ should generate $\Z$, which it does according to Lemma \ref{lem:support}. Denote at last $\mathcal{H}_k \subset \C^\ell$ the preimage under $L_k$ of the union of hyperplanes $\{c_{j_{min}}c_{j_{max}}=0\}$ where $j_{min}$ and $j_{max}$ are the smallest and largest elements in $J_k$. If we pre-compose the map $\mu_{\Delta,1}$ with the isomorphism $\mR$, the resulting map $\mu$ is defined on the fundamental group of $\C^\ell \setminus \big( \cup_k (\mD_k  \cup \mathcal{H}_k)\big)$. By Proposition \ref{prop:disc}, the map $L$ is generic in the sense of \cite[Definition 1.2]{EL2}. Therefore, \cite[Corollary 4]{EL2} implies that $\mu$ is surjective, and so is $\mu_{\Delta,1}$.
\end{proof}

\section{Patchworking monodromy}\label{sec:patch}

In this section, we show how the inclusion of a triangle $T \subset \Delta$ implies the existence of a subgroup of $\im(\mud)$ nearly isomorphic to the group $\Aut(\obs_{T,1})$ of Theorem \ref{thm:triangle}. Our strategy relies mainly on a technique similar to Viro's Patchworking, see for instance \cite{Viro08} and reference therein.

\begin{Definition}\label{def:wedge}
A \emph{wedge} in $\Delta$ is a subset of $\partial \Delta \cap M$ of the form $\big\lbrace b_0,...,b_\ell, v \big\rbrace$ such that the $b_i$ are consecutive points on some edge $\Delta_j \subset \Delta$ and $v$ is not in $\Delta_j$. 

For any subset $S\subset \partial \Delta\cap M$ such that $\left\langle S \right\rangle$ has rank $2$, define the finite set $\emph{Q_S} := ( M / \left\langle S\right\rangle) \big/ \pm \id$ and the \emph{obstruction map} $\emph{\obs_S} : \itrz(\Delta) \rightarrow Q_S$ to be the restriction of the quotient map $M \rightarrow Q_S$.

For a wedge $\w$ in $\Delta$, $T:= \conv(\w) \subset \Delta$ and any subset $S\subset M_\R$, a subgroup $G < \Aut\big(\obs_{\partial\Delta}\big)$ is a \emph{$(\w,S)$-group} (respectively a \emph{strict $(\w,S)$-group}) if  

-- every $\sigma\in G$ fixes $\itrz(\Delta)\setminus T$ (respectively $\itrz(\Delta)\setminus \itr(T)$) pointwise,

-- every $\sigma\in G$ fixes $\itrz(T)$ globally,

-- for any $\tau \in \Aut\big(\obs_S\big)$ fixing $\itrz(T)$ globally, there exists $\sigma\in G$ such that the restrictions of $\tau$ and $\sigma$ to $\itrz(T)$ coincide.\\
For short, we refer to a $(\w,\w)$-group simply as a \emph{$\w$-group}.
\end{Definition}

In the above definition, $\left\langle S\right\rangle$ is the affine lattice generated by $S$. In order to define $Q_S$, we consider the group structure on $M/\left\langle S\right\rangle$ induced from $M$, with neutral element $\left\langle S\right\rangle$. Note at last that the lattice polygon $T\subset \Delta$ is a triangle and that 
 $\obs_{\w}= \obs_{\Delta,1}$ (for the appropriate indexation) and $G_\w= \Aut\big(\obs_{\w}\big)$ when $\Delta=\conv(\w)$.

The main result of this section is the following.

\begin{Theorem}\label{thm:patch}
For any wedge $\w$ in $\Delta$, the image of $\mud$ contains a $\w$-group.
\end{Theorem}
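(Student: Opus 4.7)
The plan is to transfer the triangle monodromy of Theorem \ref{thm:triangle} to $\sv{\Delta}$ through a patchworking-type degeneration in the parametrization \eqref{eq:param}. Write the wedge as $\w = \{b_0, \ldots, b_\ell, v\}$ with $b_0, \ldots, b_\ell$ consecutive on some edge $\Delta_j$ of $\Delta$ and apex $v \in \partial\Delta \cap M$, and set $T := \conv(\w)$. The case where Theorem \ref{thm:triangle} is not applicable (for instance when $\ell \leq 1$ so that $\Aut(\obs_{T,1})$ is trivial) is settled by the trivial $\w$-group, so we may assume $\ell \geq 2$.

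First, I would fix a rational simple Harnack curve $C_0 \in \sv{\Delta}$ provided by Proposition \ref{prop:ratharnack} and isolate, among the parameters of \eqref{eq:param}, the $\ell$ real parameters $a_1, \ldots, a_\ell$ associated to the wedge lattice points $b_0, \ldots, b_\ell$. Introducing a scaling parameter $s > 0$, I would consider the family $C_s$ obtained by first recentering these parameters to $0 \in \cp{1}$ via an automorphism of the parametrization, then replacing $a_i$ by $s \cdot \tilde a_i$ for a tuple $\{\tilde a_i\}$ satisfying the interlacing condition of Proposition \ref{prop:ratharnack} for the triangle $T$. In the rescaled coordinate $\tau = t/s$, the curve $C_s$ converges as $s \to 0$ to a rational simple Harnack curve $C_T \in \sv{T, 1}$ of parametrization $\phi_{\{\tilde a\}}$ given by Lemma \ref{lem:param3}, while in the original coordinate it converges to an auxiliary rational curve supported on the complementary cell of a convex subdivision of $\Delta$ containing $T$. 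The amoeba $\A(C_s \cap \Xb)$, rescaled by $|\log s|$, converges to the tropical curve dual to this subdivision; combining the order maps (Definition \ref{def:ord}) on each limit yields a canonical identification of the nodes of $C_s$ with $\itrz(\Delta)$ that partitions them into a \emph{triangle cluster} bijecting with $\itrz(T)$ and an \emph{outer cluster} bijecting with $\itrz(\Delta) \setminus T$.

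For any $\tau_T \in \Aut(\obs_{T,1})$, Theorem \ref{thm:triangle} provides a loop $\gamma_T \subset \sv{T,1}$ based at $C_T$ with $\mu_{T,1}(\gamma_T) = \tau_T$. I would lift $\gamma_T$ to a loop $\gamma_s$ in $\sv{\Delta}$ based at $C_s$ by varying only the parameters $a_i(s, \theta) = s \cdot \tilde a_i(\theta)$ in \eqref{eq:param} according to $\gamma_T(\theta)$, keeping all other parameters fixed. For $s > 0$ sufficiently small, continuity and compactness along $\gamma_T$ ensure that $\gamma_s$ stays inside $\sv{\Delta}$: the triangle cluster retains the nodal combinatorics inherited from $\sv{T,1}$, and the outer cluster is unperturbed. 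Consequently $\mud(\gamma_s)$ acts on the triangle cluster exactly as $\tau_T$ does on $\itrz(T)$ and fixes the outer cluster pointwise. Since the affine lattice $\langle \w \rangle$ coincides with $M_{T,1}$ from Section \ref{sec:maintool}, the obstruction maps agree as $\obs_\w|_{\itrz(T)} = \obs_{T,1}$. Hence for every $\tau \in \Aut(\obs_\w)$ fixing $\itrz(T)$ globally, its restriction to $\itrz(T)$ lies in $\Aut(\obs_{T,1})$ and can be realized by some $\mud(\gamma_s)$; the subgroup of $\im(\mud)$ so generated is therefore a $\w$-group.

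The main technical obstacle is to make the amoeba-separation argument uniform along the entire loop $\gamma_T$, in order to guarantee both that $\gamma_s$ genuinely avoids the discriminant of $\sv{\Delta}$ and that no nodes migrate between the triangle and outer clusters. This is where a Viro-style patchworking analysis enters: the key input is that the triangle limit $C_T$ and the outer curve in the degeneration are both rational simple Harnack, so the amoeba geometry is controlled by the results of \cite{MR}. Lattice points on $\partial T \cap \itrz(\Delta)$ deserve separate care, lying on the interface between the two clusters; one expects the tropical limit to assign them unambiguously to the outer cluster (they lie outside $\itr T$) and hence to be fixed by $\gamma_s$, which would upgrade the construction to a strict $\w$-group.
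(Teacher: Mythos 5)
Your proposal follows the same overall degeneration strategy as the paper's proof: rescale parameters to degenerate the curve along $\partial T$, realise the triangle permutation via Theorem \ref{thm:triangle}, and lift the loop into $\sv{\Delta}$. The paper implements this with the lifted polytope $\Delta_\nu$, the toric $3$-fold $X_{\Delta_\nu}$ with its vertical projections $\pi,\pi',\pi''$, the explicit family $\phi_{z,\{a\}}$ of Lemma \ref{lem:patch}, and the node-tracking Lemma \ref{lem:ord}. The genuine gap in your write-up is exactly where you flag the ``main technical obstacle'': your assertion that a tropical-limit identification of nodes plus ``continuity and compactness along $\gamma_T$'' suffice to produce the clustering into a triangle cluster (permuted by $\tau_T$) and an untouched outer cluster is precisely the content of Lemma \ref{lem:ord}, which the paper proves by tracking homology classes of vanishing loops through the degeneration, using the compatibility of the coordinate systems under $\pi,\pi',\pi''$ and the fact that the base-point curve and every component of the $z\to 0$ limit are simple Harnack. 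Moreover, the argument you sketch for closing the gap cannot work as stated: you invoke control of the amoeba by \cite{MR} ``along the entire loop $\gamma_T$'', but $C_T(\theta)$ is not simple Harnack for $\theta\neq 0$. The Harnack property only controls the endpoints of the path; the in-between tracking must come from the covering structure of $\sv{\Delta}$, which is why the paper packages the argument as a loop in parameter space that is pushed to and from $z=0$ and then slightly perturbed off $\{z=0\}$.

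Two smaller points. First, when $T$ has a single edge on $\partial\Delta$ the complement of $T$ in $\Delta$ is not convex and decomposes into two convex cells $\Delta'$ and $\Delta''$, so the $z\to 0$ limit has three components $C'\cup C_T\cup C''$ rather than one auxiliary outer curve; the two parametrisations $\phi_{\{a\}}'$ and $\phi_{\{a\}}''$ of Lemma \ref{lem:patch} are both needed. Second, your expectation that the boundary nodes on $\partial T\cap\itrz(\Delta)$ land ``unambiguously'' in the outer cluster and are therefore fixed is too optimistic: part $b)$ of Lemma \ref{lem:ord} only shows they converge to the gluing points $p',p''$, and it says nothing about how they redistribute under the perturbation off $\{z=0\}$. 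This is precisely why Theorem \ref{thm:patch} delivers a $\w$-group rather than a strict one; the remark following it explains that strictness requires additional input (vanishing of certain cohomology groups controlling the relevant deformations).
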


There are three cases to consider depending whether $T:=\conv(\w)$ has exactly $1$, $2$ or $3$ of its edges on $\partial \Delta$. In the latter case, the polygon $\Delta$ is itself a triangle and Theorem \ref{thm:patch} is a consequence of Theorem \ref{thm:triangle}. In the rest of this section, we restrict to the first case. The second case requires no extra arguments, simply different notation.

Let us assume that $T$ has exactly $1$ edge $\edge$ on $\partial \Delta$. Denote $\edge' $ and $\edge''$ the remaining edges of $T$ so that $\edge'$, $\edge$ and $\edge''$ are ordered counter-clockwise on $\partial T$. We fix coordinates on $M$ such that the edge $\edge$ is the segment joining $(0,0)$ to $(\ell,0)$ and such that the vertex $\edge' \cap \edge''$ has coordinates $(p,q)$ for some $p, \, q \in \Z_{>0}$. The polygon $T$ induces the subdivision $\Delta := \Delta' \cup T \cup \Delta''$ into lattice polygons satisfying $\Delta' \cap T= \edge'$ and $\Delta'' \cap T= \edge''$. The latter subdivision is given by the domains of linearity of the piecewise linear convex function $\nu:  \Delta \rightarrow \R$ defined by 
\[
\nu (a,b) = \left\lbrace 
\begin{array}{ll}
0 & \text{if } (a,b) \in T \\
pb-qa & \text{if } (a,b) \in \Delta' \\
q(a-\ell)+(\ell-p)b & \text{if } (a,b) \in \Delta'' \\
\end{array}
\right. .
\]
For the positive integer $m:= 1+ \max \left\lbrace \nu (a,b) \; \big| \; (a,b)\in \Delta \right\rbrace$, define in turn the lattice polytope 
\[ \Delta_\nu := \left\lbrace (a,b,c) \in \Delta \times \R \; \big| \; (a,b) \in \Delta, \; \nu(a,b) \leq c \leq m \right\rbrace.\]
By construction, the projection onto $\Delta$ identifies the non-vertical facets of $\Delta_\nu$ with $\Delta'$, $T$, $\Delta''$ and $\Delta$. Therefore, the corresponding toric divisors of the toric $3$-fold $X_{\Delta_\nu}$ identify with $X_{\Delta'}$, $X_T$, $X_{\Delta''}$ and $X_\Delta$ respectively. Denote $X_{\edge'}:= X_{\Delta'} \cap X_{T}$ and $X_{\edge''}:= X_{\Delta''} \cap X_{T}$ the torus-orbit of dimension $1$ in $X_{\Delta_\nu}$.

The coordinates on $\Delta\times \R \supset \Delta_\nu$ induce coordinates $(x,y,z)\in (\C^\ast)^3$ on the torus of $X_{\Delta_\nu}$. It follows from \cite[\S 3.3]{Ful} that the $z$-coordinate realises a linear equivalence in $\Delta_\nu$ between the divisors $X_\Delta$ (at $z=\infty$) and $X_{\Delta'}\cup X_T \cup X_{\Delta''}$ (at $z=0$). In particular, the closure of the one-parameter subgroup $\big\lbrace x=a, \, y=b \big\rbrace \subset X_{\Delta_\nu}$ intersects both $X_\Delta$ and $X_T$ transversally at one point, for any $(a,b) \in \ttor$. Therefore, we can define a vertical projection $\pi$ along the latter subgroups,  both upward and downward, landing in the respective tori of $X_\Delta$ and $X_T$. We denote both projections by $\pi$ and equip the tori of $X_\Delta$ and $X_T$ with coordinates $(x,y)$ such that $\pi(x,y,z)=(x,y)$ both upward and downward. Note that $\pi$ induces an isomorphism from any horizontal slice $\overline{\{z=c\}} \subset X_{\Delta_\nu}$ to $X_\Delta$ and an other isomorphism from $\big\lbrace (x,y,c) \in (\C^\ast)^3 \big\rbrace$ to $\ttor \subset X_T$.

Similarly, we have that any interior point of the divisor $X_{\Delta'}$ (respectively $X_{\Delta''}$) is the limit point of a subgroup of the form $\big\lbrace x=az^q, \; y=bz^{-p}\big\rbrace$ $\big($respectively $\big\lbrace x=az^{-q}, \; y=bz^{p-\ell}\big\rbrace\big)$. As above, we equip the tori of $X_{\Delta'}$ and $X_{\Delta''}$ with coordinates $(x,y)$ such that the map $\pi' : (\C^\ast)^3 \rightarrow \ttor \subset X_{\Delta'}$ given by $\pi' (x,y,z)=(xz^{-q},yz^{p})$ is the projection along the subgroups $\big\lbrace x=az^q, \; y=bz^{-p}\big\rbrace$ and the map $\pi'' : (\C^\ast)^3 \rightarrow \ttor \subset X_{\Delta''}$ given by $\pi''(x,y,z)=(xz^{q},yz^{\ell-p})$ is the projection along the subgroups $\big\lbrace x=az^{-q}, \; y=bz^{p-\ell}\big\rbrace\big)$. Again the maps $\pi'$ and $\pi''$ induce isomorphisms from $\big\lbrace (x,y,c) \in (\C^\ast)^3 \big\rbrace$ to $\ttor \subset X_{\Delta'}$ and $\ttor \subset X_{\Delta''}$ respectively.

We now give a counterpart to Viro's patchworking polynomials in terms of parametrisation of rational  curves. In the formula \eqref{eq:param4} below, we relabel the parameters $a_{j,l}$ and the corresponding exponents $(\alpha_j, \beta_j)$ of the parametrisation \eqref{eq:param}. Denote by $J'$, $J_T$ and $J''$ the multi-sets of primitive integer vectors in $\partial \Delta$ contained in $\partial \Delta'$, $\partial T$ and $\partial \Delta''$ respectively. For any $j \in J' \cup J_T \cup J''$, the vector $(\alpha_j, \beta_j)$ is the primitive inner normal to the edge of $\Delta$ containing $j$. Note in particular that $(\alpha_j, \beta_j)=(0,1)$ for any $j \in J_T$. At last, we denote by $\sv{T,\w} \subset \sv{T}$ the subset of curves intersecting the toric orbits corresponding to $\edge'$ and $\edge''$ only once. This is the analogue of $\sv{\Delta,1}$ of Section \ref{sec:maintool}.

\begin{Lemma}\label{lem:patch}
Let $\{a\}:=\big\lbrace a_j\big\rbrace_{j \in J' \cup J_T \cup J''} \subset \C^\ast$ such that $\big\lbrace a_j\big\rbrace_{j \in J'}$, $\big\lbrace a_j\big\rbrace_{j \in J_T}$ and $\big\lbrace a_j\big\rbrace_{j \in J''}$ are mutually disjoint. For any $z \in \C^\ast$, define the parametrisation $\phi_{z,\{a\}}$ from $\cp{1}$ to $X_{\Delta_\nu}$ by 
\begin{equation}\label{eq:param4}
t \mapsto \Big( \prod_{j\in J'} \big(t-za_j\big)^{\alpha_j}\prod_{j\in J''} \big(1-zta_j^{-1}\big)^{\alpha_j}, \; \prod_{j\in J'} \big(t-za_j\big)^{\beta_j}\prod_{j\in J_T} \big(t-a_j\big)\prod_{j\in J''} \big(1-zta_j^{-1}\big)^{\beta_j}, \; z \; \Big)
\end{equation}
and define $C_z := \im\big( \phi_{z,\{a\}}\big)$. Then, the rational curve $C_z$ converges to the curve $C_0 \subset X_{\Delta'}\cup X_T \cup X_{\Delta''}$ with irreducible components $C' \subset X_{\Delta'}$, $C_T \subset X_T$, $C'' \subset X_{\Delta''}$ parametrised respectively by
\[
\begin{array}{l}
\displaystyle 
\phi_{\{a\}}'(t):= \Big( \prod_{j\in J'} \big(t-a_j\big)^{\alpha_j}, \;   \prod_{j\in J_T} \big(-a_j\big) \prod_{j\in J'} \big(t-a_j\big)^{\beta_j}\Big), \\  \\
\displaystyle 
\phi^T_{\{a\}}(t):= \Big( t^q, t^{-p} \; \prod_{j\in J_T} \big(t-a_j\big)\Big) , \\  \\
\displaystyle 
\phi_{\{a\}}''(t):= \Big( t^{q} \prod_{j\in J''} \big(1-ta_j^{-1}\big)^{\alpha_j}, t^{\ell-p} \; \prod_{j\in J''} \big(1-ta_j^{-1}\big)^{\beta_j}\Big).
\end{array}
\]
In particular, the curve $C_0$ intersects the divisor $X_{\edge'}$  $\big($respectively $X_{\edge''}\big)$ at the single point $p':=C' \cap C_T$ (respectively $p'':=C'' \cap C_T$).  For generic parameters $a_j$, the irreducible components of $C_0$ are nodal curves and the curve $C_T$ is an element of the subspace $\sv{T,\w}\subset\sv{T}$.
\end{Lemma}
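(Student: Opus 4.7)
The plan is to compute the $z \to 0$ limit of $C_z$ via the three projections $\pi$, $\pi'$, $\pi''$ introduced before the lemma, producing one irreducible component of $C_0$ each. First observe that $\phi_{z,\{a\}}$ defines a rational map from $\cp{1}$ to the toric $3$-fold $X_{\Delta_\nu}$: for $z \in \C^*$ fixed it lands in $(\C^*)^3 \subset X_{\Delta_\nu}$ on a dense open subset of $\cp{1}$ and extends by compactness of $X_{\Delta_\nu}$.

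For the limit in $X_T$, one applies $\pi(x,y,z)=(x,y)$ and sets $z=0$: the factors $(t-za_j)^{\alpha_j}$ become $t^{\alpha_j}$ and $(1-zta_j^{-1})^{\alpha_j}$ become $1$. Using the sum identities $\sum_{j \in J'}(\alpha_j, \beta_j)=(q,-p)$ and $\sum_{j \in J''}(\alpha_j,\beta_j) = (-q,p-\ell)$, which come from the fact that $\sum_{j \in J'} v_j$ is the total displacement $(0,0)-(p,q)$ along $\partial \Delta \cap \partial \Delta'$ rotated $90^{\circ}$ counter-clockwise (and analogously for $J''$), one recovers $\phi^T_{\{a\}}(t)$.

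For the limit in $X_{\Delta'}$, the projection $\pi'$ is along the subgroups $\{x=az^q, y=bz^{-p}\}$ and the cluster of roots $\{za_j\}_{j \in J'}$ sits near $t=0$, so the correct rescaling is $t=zs$: the identity $(zs-za_j)=z(s-a_j)$ produces factors $z^{q}$ and $z^{-p}$ in the two coordinates that cancel against $z^{-q}$ and $z^{p}$ in $\pi'$, while $(zs-a_j) \to -a_j$ for $j \in J_T$ and $(1-z^2sa_j^{-1})^{\alpha_j} \to 1$, yielding $\phi'_{\{a\}}(s)$. The limit in $X_{\Delta''}$ is obtained symmetrically by the substitution $t=z^{-1}s$ and the projection $\pi''$.

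For the intersection structure: $\phi^T_{\{a\}}$ reaches $X_{\edge'}$ only at $t=0$ (the $(\log|x|,\log|y|)$-direction there is $(-q,p)$, the outer normal of $\edge'$ in $T$) and $X_{\edge''}$ only at $t=\infty$, while $\phi'_{\{a\}}$ reaches $X_{\edge'}$ only as $s\to\infty$ (and symmetrically for $C''$). To verify $C' \cap X_{\edge'} = C_T \cap X_{\edge'}$, one evaluates the character $x^{p'}y^{q'}$ with $(p',q') = (p,q)/\gcd(p,q)$, which is annihilated by the degenerating one-parameter subgroup and thus descends to a coordinate on $X_{\edge'}$: both $\lim_{t \to 0}x(t)^{p'}y(t)^{q'}$ along $\phi^T_{\{a\}}$ and $\lim_{s\to\infty}x(s)^{p'}y(s)^{q'}$ along $\phi'_{\{a\}}$ reduce to $\prod_{j \in J_T}(-a_j)^{q'}$, since the $J'$-product contributes total degree $p'q - q'p = 0$ with leading coefficient $1$. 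This gives the common point $p'$, with intersection multiplicity $\length(\edge')=\gcd(p,q)$; the argument for $p''$ is analogous. Finally, for generic $\{a_j\}$ the triples $\{a_j\}_{j \in J'}$, $\{a_j\}_{j \in J_T}$, $\{a_j\}_{j \in J''}$ land respectively in the non-empty Zariski-open parameter sets defining $\sv{\Delta'}$, $\sv{T}$, $\sv{\Delta''}$, so $C'$, $C_T$, $C''$ are nodal irreducible rational, and the explicit form of $\phi^T_{\{a\}}$ gives $C_T \in \sv{T,\w}$ directly. The main obstacle is the character coincidence check, which requires careful tracking of leading coefficients through factors with possibly negative exponents; the remainder is essentially bookkeeping.
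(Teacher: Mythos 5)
Your proposal is correct and follows essentially the same route as the paper: compute Hausdorff limits under $\pi$, $\pi'$, $\pi''$, using no rescaling for the $T$-component and the rescalings $t=zs$ (resp.\ $t=z^{-1}s$) for the $\Delta'$- (resp.\ $\Delta''$-) component. Your explicit observation that the computation rests on $\sum_{j\in J'}(\alpha_j,\beta_j)=(q,-p)$ and $\sum_{j\in J''}(\alpha_j,\beta_j)=(-q,p-\ell)$ --- the closure relation for the boundary segments of $\Delta'$ and $\Delta''$ --- is a genuine clarification that the paper leaves implicit.

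The one place where you diverge is the identification $p'=C'\cap C_T$ (and likewise $p''$). The paper argues by softness: each of $C'$ and $C_T$ meets $X_{\edge'}$ in a single point, and since $C_0$ is connected (being a flat limit of the irreducible $C_z$) these points must agree. You instead compute directly the value of the character $x^{p'}y^{q'}$ (where $(p',q')=(p,q)/\gcd(p,q)$) on the orbit $X_{\edge'}$ along both parametrisations, checking that both limits equal $\prod_{j\in J_T}(-a_j)^{q'}$ because the $J'$-contribution has total exponent $p'q-q'p=0$ with leading coefficient $1$. This is correct --- one should note that the character $x^{p'}y^{q'}$ pulls back to the same character of $(\C^*)^3$ whether one uses the $(x,y)$-coordinates on the torus of $X_T$ or on that of $X_{\Delta'}$, precisely because the exponent $(p',q',0)$ kills the degenerating subgroup $(z^q,z^{-p},z)$; hence the comparison is legitimate. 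This computation is more explicit and also yields the intersection multiplicity $\length(\edge')$, which the paper does not need here but uses implicitly elsewhere. A small caveat: your notation $(p',q')$ for $(p,q)/\gcd(p,q)$ collides with the lemma's use of $p'$, $p''$ for the intersection points; in a final write-up those symbols should be renamed.
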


\begin{proof}
As $z$ tends to $0$, the rational curve $C_z$ converges towards the divisor $X_{\Delta'}\cup X_T \cup X_{\Delta''}$. Hence, the limiting curve $C_0$ consists of rational components $C' \subset X_{\Delta'}$, $C_T \subset X_T$ and $C'' \subset X_{\Delta''}$. Let us compute respective parametrisations $\phi_{\{a\}}'$, $\phi_{\{a\}}^T$ and $\phi_{\{a\}}''$.

The curves $C'$, $C_T $ and $C''$ are the respective Hausdorff limit of $\pi' \circ \phi_{z,\{a\}}\big(\cp{1}\big)$, $\pi\circ\phi_{z,\{a\}}\big(\cp{1}\big)$ and $\pi''\circ\phi_{z,\{a\}}\big(\cp{1}\big)$ when $z$ tends to $0$.
For $\phi_{\{a\}}^T$, we have that $\lim_{z\rightarrow 0} \pi\big(\phi_{z,\{a\}}(t)\big)=\big( t^q, t^{-p} \; \prod_{j\in J_T} \big(t-a_j\big)\big)$. It follows that $\phi_{\{a\}}^T$ is as announced in the statement. For the map $\phi_{\{a\}}'$, we need to make the change of variable $t=zt'$ within the  limit $\lim_{z \rightarrow 0} \pi'\big(\phi_{z,\{a\}}(t)\big)$. The computation goes as follows
\[
\begin{array}{l}
\displaystyle \quad \lim_{z \rightarrow 0} \Big(z^{-q} \prod_{j\in J'} \big(t-za_j\big)^{\alpha_j}\prod_{j\in J''} \big(1-zta_j^{-1}\big)^{\alpha_j}, \; z^{p} \prod_{j\in J'} \big(t-za_j\big)^{\beta_j}\prod_{j\in J_T} \big(t-a_j\big)\prod_{j\in J''} \big(1-zta_j^{-1}\big)^{\beta_j} \Big) \\ \\
= \displaystyle \lim_{z \rightarrow 0} \Big(z^{-q} \prod_{J'} \big(zt'-za_j\big)^{\alpha_j}\prod_{ J''} \big(1-z^2t'a_j^{-1}\big)^{\alpha_j}, \; z^{p} \prod_{J'} \big(zt'-za_j\big)^{\beta_j}\prod_{J_T} \big(zt'-a_j\big)\prod_{J''} \big(1-z^2t'a_j^{-1}\big)^{\beta_j} \Big) \\ \\
= \displaystyle \lim_{z \rightarrow 0} \Big(\prod_{J'} \big(t'-a_j\big)^{\alpha_j}\prod_{J''} \big(1-z^2t'a_j^{-1}\big)^{\alpha_j}, \; \prod_{J'} \big(t'-a_j\big)^{\beta_j}\prod_{J_T} \big(zt'-a_j\big)\prod_{J''} \big(1-z^2t'a_j^{-1}\big)^{\beta_j} \Big) \\ \\
= \displaystyle \Big( \prod_{j\in J'} \big(t'-a_j\big)^{\alpha_j}, \;   \prod_{j\in J_T} \big(-a_j\big) \prod_{j\in J'} \big(t'-a_j\big)^{\beta_j}\Big).
\end{array}
\]
It follows that $\phi'_{\{a\}}$ is as announced above. We obtain the parametrisation $\phi_{\{a\}}''$ similarly, using the change of variable $t''=zt$.

For the second part of the statement, recall that we have $a_j\in\C^\ast$ for any $j$. Under this assumption, we read from the parametrisation $\phi_{\{a\}}'$ that $\infty \in \cp{1}$ is the only point mapping to $X_\edge'$, from $\phi_{\{a\}}''$ that $0 \in \cp{1}$ is the only point mapping to $X_\edge''$ and finally that $\phi_{\{a\}}^T$ is as in \eqref{eq:param3}. 
In particular, the curve $C_T$ intersects both $X_{\edge'}$ and $X_{\edge''}$ at a single point. Denote $p':=C' \cap X_{\edge'}$ and $p'':=C'' \cap X_{\edge''}$. As the curve $C_0$ is connected, we have $p'=C' \cap C_T$ and $p''=C'' \cap C_T$. Finally, it is also clear from the parametrisations $\phi_{\{a\}}'$, $\phi_{\{a\}}^T$ and $\phi_{\{a\}}''$ and from the general form \eqref{eq:param} that generic parameters $a_j$ lead to generic rational curves $C'$, $C_T$ and $C''$ submitted to the above tangency conditions with $X_{\edge'}$ and $X_{\edge''}$. 
In particular, the curves $C'$, $C_T$ and $C''$ have only nodes as singularities.
\end{proof}

Let $\emph{U}$ be the space of parameters $\big(z, \{a_j\}_{1 \leq j \leq n}\big)$ involved in \eqref{eq:param4}.
 For $\big(z,\{a\} \big) \in U$, define 
$$\emph{\Phi\big(z,\{a\} \big)} := \phi_{z,\{a\}}\big(\cp{1}\big) \subset X_{\Delta_\nu}$$ 
to be the curve parametrised as in \eqref{eq:param4}. By extension, define 
$$\emph{\Phi \big(0,\{a\} \big)} := \phi_{\{a\}}'\big(\cp{1}\big) \cup  \phi_{\{a\}}^T\big(\cp{1}\big) \cup  \phi_{\{a\}}''\big(\cp{1}\big) \subset X_{\Delta'} \cup  X_T \cup  X_{\Delta''}$$
as in Lemma \ref{lem:patch}.
When the parameters $\{a\}$ are real and cyclically ordered as in Proposition \ref{prop:ratharnack},  the curve $\pi\big(\Phi(z,\{a\}) \big)\subset X_\Delta$ is a rational simple Harnack curve for all $0<z\leq 1$. It follows from the same proposition that the three irreducible components of $\Phi \big(0,\{a\} \big)$ are simple Harnack curves in their respective ambient toric surfaces $X_{\Delta'}$, $X_T$ and $X_{\Delta''}$. Then, the rational  curve $\Phi \big(0,\{a\} \big):= C'\cup C_T \cup C''$ admits an order map 
\[ \emph{\ord_0} \, : \, \big\lbrace \text{nodes of } C', \; C_T \text{ and } C'' \big\rbrace \rightarrow \itrz(\Delta') \cup  \itrz(T) \cup \itrz(\Delta'') \subset \itrz(\Delta)\]
defined by the order maps of Definition \ref{def:ord} on each irreducible component $C'$, $C_T$ and $C''$. The map $\ord_0$ is a bijection by Proposition \ref{prop:extord}.

\begin{Lemma}\label{lem:ord}
Let $\{a\}$ be such that $\pi\big(\Phi(1,\{a\}) \big)\subset X_\Delta$ is a simple Harnack curve. For any $\alpha \in \itrz(\Delta)$ and $0<z\leq 1$, denote by $p_{z,\alpha}$  the unique point of $\Phi\big(z,\{a\}\big)\subset X_{\Delta_\nu}$ such that $\ord\big( \pi(p_{z,\alpha})\big)=\alpha$. For any $\alpha \in \itrz(\Delta) \setminus (\edge' \cup \edge'')$, denote by $p_{0,\alpha}$ the unique point of $\Phi\big(0,\{a\}\big)$ such that $\ord_0 \big( \pi(p_{0,\alpha})\big)=\alpha$. Then, we have
\begin{enumerate}
\item[$a)$] If $\alpha \in  \itrz(\Delta) \setminus (\edge' \cup \edge'')$, then $\lim_{z\rightarrow 0} p_{z,\alpha}=p_{0,\alpha}$.
\item[$b)$] If $\alpha \in \edge'$ (respectively $\edge''$), then $\lim_{z\rightarrow 0} p_{z,\alpha} \in X_{\edge'}$ (respectively $X_{\edge''}$).
\end{enumerate}
\end{Lemma}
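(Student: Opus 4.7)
The plan combines three ingredients: compactness of $X_{\Delta_\nu}$, closedness of the nodal locus in the family $\{\Phi(z,\{a\})\}_{z\in[0,1]}$, and continuity of the order-defining loops. Given any sequence $z_n\to 0$, compactness yields a subsequential limit $p_\ast=\lim p_{z_n,\alpha}\in X_{\Delta_\nu}$. Since nodes in the family $\Phi(z,\{a\})$ correspond algebraically to pairs $(t_1,t_2)\in(\cp{1})^2$ with $\phi_{z,\{a\}}(t_1)=\phi_{z,\{a\}}(t_2)$, the limit $p_\ast$ must be a singular point of $\Phi(0,\{a\})=C'\cup C_T\cup C''$. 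By Lemma \ref{lem:patch} and Proposition \ref{prop:extord}, these singularities consist exactly of the nodes of $C'$, $C_T$, $C''$, bijectively indexed through $\ord_0$ by $\itrz(\Delta')$, $\itrz(T)$, $\itrz(\Delta'')$, together with the two crossings $p':=C'\cap C_T\in X_{\edge'}$ and $p'':=C_T\cap C''\in X_{\edge''}$.

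I then pin down $p_\ast$ using the order. Per Definition \ref{def:ord}, $\alpha$ is the class in $H_1(\Xb,\Z)\simeq M$ of a loop $\gamma_{z,\alpha}\subset\pi(\Phi(z,\{a\}))\cap\Xb$ lifting a path from the reference component of the amoeba to the hole associated with $\pi(p_{z,\alpha})$. I choose $\gamma_{z,\alpha}$ continuously in $z\in(0,1]$, so that its class remains equal to $\alpha$. The key observation is that the limiting behaviour of $\gamma_{z,\alpha}$ detects which stratum of $\Phi(0,\{a\})$ contains $p_\ast$: if $p_\ast$ is a node of one of $C'$, $C_T$, $C''$, then $\gamma_{z,\alpha}$ converges to a loop in the corresponding torus whose class equals the $\ord_0$-order of $p_\ast$, forcing $\alpha\in\itrz(\Delta')$, $\itrz(T)$, or $\itrz(\Delta'')$; if $p_\ast\in\{p',p''\}$, then $\gamma_{z,\alpha}$ stretches along the one-parameter subgroup degenerating to $X_{\edge'}$ or $X_{\edge''}$, forcing $\alpha$ to lie on $\edge'$ or $\edge''$. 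For part (a), $\alpha\in\itrz(\Delta)\setminus(\edge'\cup\edge'')$ lies in the interior of exactly one of $\Delta',T,\Delta''$, say $\itr(T)$ (the other two cases being symmetric); all options except ``$p_\ast=$ node of $C_T$ with order $\alpha$'' are ruled out, so $p_\ast=p_{0,\alpha}$. For part (b), $\alpha\in\edge'\cap\itrz(\Delta)$ rules out every node of $C',C_T,C''$ and also rules out $p''$, leaving $p_\ast=p'\in X_{\edge'}$; the case of $\edge''$ is symmetric. Since every subsequential limit agrees, the genuine limit exists and equals the asserted value.

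The main technical obstacle is to make the continuity-of-loops claim rigorous at $z=0$. I expect this to follow from a patchworking-style analysis of the amoebas: for small $z$, the amoeba of $\pi(\Phi(z,\{a\}))$ retracts onto a tropical skeleton dual to the subdivision $\Delta=\Delta'\cup T\cup\Delta''$, whose bounded $2$-cells are labelled by $\itrz(\Delta')\cup\itrz(T)\cup\itrz(\Delta'')$ and whose bounded internal edges, dual to $\edge'$ and $\edge''$, collect the labels $\edge'\cap\itrz(\Delta)$ and $\edge''\cap\itrz(\Delta)$. Taking $\gamma_{z,\alpha}$ as a lift of a path in this skeleton reduces the identification of $p_\ast$ to the combinatorics of the subdivision, yielding exactly the case analysis above.
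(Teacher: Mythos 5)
Your proposal is correct and follows essentially the same route as the paper: both arguments reduce the identification of the limit to the continuity of the order-defining loop and the fact that its homology class (a discrete invariant) is constant along the family, so the labels $\ord$ and $\ord_0$ must match. The main stylistic difference is direction: the paper starts from the loop $\rho_0$ on $\Phi(0,\{a\})$ and deforms it outward to $\rho_z\subset\Phi(z,\{a\})$ passing through a double point $p_{z,\beta}$, concluding $\beta=\alpha$, whereas you pass to a subsequential limit $p_\ast$ of $p_{z,\alpha}$ via compactness and then classify $p_\ast$ among the singular points of $C_0=C'\cup C_T\cup C''$ supplied by Lemma \ref{lem:patch}. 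Your compactness/subsequential-limit framing is a bit more robust topologically and makes explicit why $p_\ast$ must be a singular point of $C_0$, which the paper leaves implicit; on the other hand, the paper's outward deformation picture lets it deduce part (b) directly as a corollary of part (a) by an exclusion count, which is cleaner than tracking the degenerating loop near $p'$ or $p''$. The technical crux you flag — making the loop continuity rigorous at $z=0$ — is treated with comparable brevity in the paper (``we can continuously deform this loop\ldots according to our choices of coordinate systems\ldots''), so your proposal is not leaving a gap that the original argument actually fills; either way one should note that the double points of $\Phi(z,\{a\})$ form a finite covering of the parameter interval, so each node of $C_0$ lifts uniquely to a continuous section $p(z)$, and the invariance of the homology class along the corresponding loop family is all that is then needed.
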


Before tackling the proof, recall that the maps $\pi$, $\pi'$ and $\pi''$ induce isomorphisms between the tori of $X_{\Delta}$, $X_{\Delta'}$ $X_{T}$ and $X_{\Delta''}$ and that the induced isomorphisms between the respective first homology groups read as the identity in the coordinate systems chosen above.

\begin{proof}
Fix $\alpha \in  \itrz(\Delta) \setminus (\edge' \cup \edge'')$.
Recall from Definition \ref{def:ord} that $\ord_0 (p_{0,\alpha})$ is given by the homology class of a loop $\rho_0$ contained in one of the irreducible components of $\Phi\big(0,\{a\}\big)$, that this loop passes through $p_{0,\alpha}$ and is invariant by complex conjugation. We can continuously deform this loop to a loop $\rho_z \subset \Phi\big(z,\{a\}\big)$ ($0<z<1$) invariant by complex conjugation passing through the double point $p_{z,\beta}$ for some $\beta \in \itrz(\Delta)$. According to our choices of coordinate systems, we have that $\rho_0$ and $\pi(\rho_z)$ have the same homology class. It follows that $\ord\big(\pi(p_{z,\beta})\big)= \alpha$ and that $\beta=\alpha$. The statement $a)$ is proven. 

From part $a)$, we know that the point $p_{z,\alpha}$ for $\alpha \in \edge' \cup \edge''$ has to converge to one of the remaining singular points of $\Phi\big(0,\{a\}\big)$, namely $p'$ and $p''$(see Lemma \ref{lem:patch}). Again, looking at homology classes of appropriate loops passing through $p_{z,\alpha}$, we deduce that $p_{z,\alpha}$ converges to $p'$ if and only if $\alpha \in \edge'$.
\end{proof}

\begin{proof}[Proof of Theorem \ref{thm:patch}]
Let $\w$ be a wedge in $\Delta$ and denote $T:=\conv(\w)$.
Recall that the monodromy map $\mud : \pi_1 \big( \sv{\Delta}, \rc \big) \rightarrow \Aut \big(\itrz(\Delta)\big)$ of \eqref{eq:monomapdelta} is defined for a simple Harnack curve $\rc \in \sv{\Delta}$ via the bijection $\ord : \big\lbrace \text{nodes of } \rc\big\rbrace \rightarrow \itrz(\Delta)$. Let $\{a\}$ be real parameters so that $\rc = \pi\big(\Phi(1, \{a\})\big)$. Let us fix an arbitrary element $\tau \in \Aut\big((\obs_\w)_{\vert \itrz(T)}\big)$ and show that there exists $\sigma \in \im(\mud)$ satisfying the properties $a)$, $b)$ of Definition \ref{def:wedge} and such that $\sigma_{\vert_{\itrz(T)}}=\tau$. As $\tau$ is arbitrary, the latter implies the statement we aim to prove.

According to Theorem \ref{thm:triangle}, there exists a loop $\big\lbrace a(\theta)\big\rbrace:=\big\lbrace a_j(\theta)\big\rbrace_{j \in J' \cup J_T\cup J''}$ with $0\leq \theta \leq 1$ based at $\{a\}$, that is $\big\lbrace a(1)\big\rbrace=\big\lbrace a(0)\big\rbrace=\{a\}$,  and satisfying the following:
\begin{enumerate}
\item[-] the family $\big\lbrace a_j(\theta)\big\rbrace_{j \in J'\cup J''}$ is constant,
\item[-] for any $0\leq \theta \leq 1$, the rational curve $\phi_{\{ a(\theta)\}}^T\big(\cp{1}\big)\subset X_T$ is in $\sv{T, \w}$,
\item[-] the image by monodromy map $\mu_T$ of the family $\big\lbrace\phi_{\{a(\theta)\}}^T\big(\cp{1}\big) \big\rbrace_{0\leq \theta \leq 1} \subset  \sv{T, \w}$ based at the simple Harnack curve $\phi_{\{a\}}^T\big(\cp{1}\big)$ is the permutation $\tau$.
\end{enumerate}
As $\big\lbrace a_j(\theta)\big\rbrace_{j \in J'\cup J''}$ is constant, the monodromy is trivial on the families $\big\lbrace\phi_{\{a(\theta)\}}'\big(\cp{1}\big) \big\rbrace_{0\leq \theta \leq 1} \subset  \sv{\Delta'}$ and $\big\lbrace\phi_{\{a(\theta)\}}''\big(\cp{1}\big) \big\rbrace_{0\leq \theta \leq 1} \subset  \sv{\Delta''}$. 
Indeed, the parametrisation $\phi_{\{ a(\theta)\}}''$ only depends on the parameters $\big\lbrace a_j(\theta)\big\rbrace_{j \in J''}$ while the multiplicative factor $\prod_{j\in J_T} \big(-a_j\big)$ in the parametrisation $\phi_{\{ a(\theta)\}}'$ induces no permutation of the nodes of the curve $\phi_{\{ a(\theta)\}}'\big(\cp{1}\big)$.
Consider now the loop $\big\lbrace\big(z(\theta), \{b(\theta)\}\big)\big\rbrace_{0\leq \theta \leq 3} \subset U$ based at $\big(1, \{a\}\big)$ given by 
\[\big(z(\theta), \{b(\theta)\}\big):=\left\lbrace
\begin{array}{ll}
\big(1-\theta, \{a\})\big) & \text{for } \; 0\leq \theta \leq 1 \\
\big(0, \{a(\theta-1)\})\big) & \text{for } \; 1\leq \theta \leq 2 \\
\big(\theta-2, \{a\}\big) & \text{for } \; 2\leq \theta \leq 3 \\
\end{array}.
\right.\]
There exists an arbitrarily small deformation $\big\lbrace\big(z'(\theta), \{b'(\theta)\}\big)\big\rbrace$ of $\big\lbrace\big(z(\theta), \{b(\theta)\}\big)\big\rbrace$ inside $U \setminus \big\lbrace z=0 \big\rbrace$ such that the curve $C(\theta):= \pi \big(\Phi \big(z(\theta), \{b(\theta)\}\big)\big) \subset X_\Delta$ is in $\sv{\Delta}$ for any $0\leq \theta \leq 3$. For such deformation, denote $\sigma:= \mud \big( \big\lbrace C(\theta)\big\rbrace_{0\leq \theta \leq 3} \big)$. First, the permutation $\sigma$ is in $\Aut\big(\obs_{\partial\Delta}\big)$ by Proposition \ref{prop:obs}.  It follows now from Lemma \ref{lem:ord} that $\sigma$ satisfies $a)$ and $b)$ of Definition \ref{def:wedge} and that $\sigma_{\vert_{\itrz(T)}}=\tau$.
\end{proof}

\begin{Remark}
Fix $k \in Q_{\w} \setminus 0$ and consider a parameter $\{a\}$ such that $\phi_{\{a\}}'\big(\cp{1}\big)$ and $\phi_{\{a\}}''\big(\cp{1}\big)$ are nodal and such that the subset of parameters $\{a_j\}_{j\in J_T}$ is a generic point in  $ \D_k$.  Consider moreover a $1$-parametric family $\big\lbrace a(\theta) \big\rbrace_{0\leq \theta \leq 1}$ with $\big\lbrace a(0) \big\rbrace=\{a\}$ such that the family of rational curves $\Phi\big(\theta, \big\lbrace a(\theta) \big\rbrace\big)$ has exactly $\vert \itrz(\Delta) \vert-1$ singular points for $0<\theta\leq 1$. Equivalently, the deformation of $\Phi\big(0, \{a\}\big)$ along $\Phi\big(\theta, \big\lbrace a(\theta) \big\rbrace\big)$ preserves the $A_3$-singularity coming from $\D_k$ and deforms the points $p'$ and $p''$ (see Lemma \ref{lem:patch}) into the maximal number of double points. If such a deformation exists for any $k$ as above, then we can ensure that the permutation $\sigma$ constructed in the proof of Theorem \ref{thm:patch} is the identity on $\partial T \cap M$. The existence of such degenerations is addressed in \cite[Theorem 2.8]{ST2} and requires the vanishing of certain cohomology groups. In a rather circuitous manner, the existence of the latter deformations arises as a consequence of Theorem \ref{thm:main}. 
\end{Remark}

\section{Combinatorics}\label{sec:combinatorics}

Recall Definition \ref{def:wedge} and the notation therein. The main result of this section is the following.

\begin{Theorem}\label{thm:combinatorics}
For any complete toric surface $X$, there exists a constant $\ell := \ell(X)>0$ such that for any lattice polygon $\Delta\in \aC_{\geqslant \ell}(X)$, any subgroup $G < \Aut(\obs_{\partial\Delta})$ that contains a $\w$-group for any wedge $\w$ in $\Delta$ is the whole group $\Aut\big(\obs_{\partial\Delta}\big)$.
\end{Theorem}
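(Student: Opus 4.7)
The plan is to reduce Theorem~\ref{thm:combinatorics} to a combinatorial covering statement by exploiting the product decomposition $\Aut(\obs_{\partial\Delta}) = \prod_{q \in Q_\Delta} \Sym(F_q)$, where $F_q := \obs_{\partial\Delta}^{-1}(q) \subseteq \itrz(\Delta)$. The first step is to unpack the action of a single $\w$-group. For a wedge $\w$ with triangle $T := \conv(\w)$, the inclusion $\langle\w\rangle \subseteq M_\Delta$ yields a surjection $\pi_\w : Q_\w \twoheadrightarrow Q_\Delta$, so each $\obs_\w$-fiber on $\itrz(T)$ sits inside a single $\obs_{\partial\Delta}$-fiber. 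By Definition~\ref{def:wedge}, the group $G_\w \subseteq G$ acts on $\itrz(T)$ as $\prod_{q' \in Q_\w}\Sym(T \cap \obs_\w^{-1}(q'))$ and trivially on $\itrz(\Delta) \setminus T$. Selecting a single class $q'$, one obtains elements $\sigma \in G$ realising any permutation of $A_{\w,q'} := T \cap \obs_\w^{-1}(q')$ while acting as identity on all of $\itrz(\Delta) \setminus A_{\w,q'}$. Since $A_{\w,q'} \subseteq F_{\pi_\w(q')}$, the restriction of $\sigma$ to any $F_q$ with $q \neq \pi_\w(q')$ is trivial, so these group elements indeed lie in the factor $\Sym(F_{\pi_\w(q')})$ of the product decomposition.

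The second step combines this with a classical group-generation principle: whenever $|A|, |B| \geq 2$ and $A \cap B \neq \emptyset$, one has $\langle \Sym(A), \Sym(B)\rangle = \Sym(A \cup B)$ (via the identity $(ax)(bx)(ax) = (ab)$ for $x \in A \cap B$). Iterating, for any family of subsets $\{A_i\}$ of a finite set $F$, each of size at least $2$, whose union is $F$ and whose induced hypergraph is connected, one has $\langle \Sym(A_i)\rangle = \Sym(F)$. The theorem is therefore reduced to the following combinatorial statement: for every $\Delta \in \aC_{\geqslant \ell(X)}(X)$ and every $q \in Q_\Delta$, there exist wedges $\w_1, \ldots, \w_k$ in $\Delta$ and classes $q'_i \in Q_{\w_i}$ with $\pi_{\w_i}(q'_i) = q$, such that the sets $A_i := T(\w_i) \cap \obs_{\w_i}^{-1}(q'_i)$ each have size at least $2$, cover $F_q$, and form a connected hypergraph.

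To carry out the covering, I would exploit the long edges afforded by $\Delta \in \aC_{\geqslant \ell}$. For each edge $\Delta_j$ and each boundary vertex $v$ of $\Delta$ not on $\Delta_j$, the wedge $\w := (\Delta_j \cap M) \cup \{v\}$ produces a triangle $T$ with $|\itrz(T)|$ growing like $\ell^2$, while $[M : \langle\w\rangle]$ is bounded by a constant depending only on the singularity structure of $X$ (controlled by the primitive edge vectors $v_j$). For $\ell$ large enough, this density ensures that the $\obs_\w$-subfibers of $\itrz(T)$ meeting a given $F_q$ contain at least two points. Letting $v$ range over the vertices of $\Delta$ and sliding the edge $\Delta_j$ around $\partial\Delta$, the resulting triangles cover $\itrz(\Delta)$ with substantial pairwise overlap, which forces the hypergraph of the $A_i$'s inside each $F_q$ to be connected. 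Combining these pieces yields $G \supseteq \prod_q \Sym(F_q) = \Aut(\obs_{\partial\Delta})$, and the reverse inclusion is part of the hypothesis.

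The main obstacle is the last step. Abstractly, one must verify, for every admissible $\Delta$, that the covering by $\obs_\w$-subfibers avoids pathological size-one fragments and remains hypergraph-connected when restricted to each individual $F_q$. This is delicate because $\langle\w\rangle$ may be strictly smaller than $M_\Delta$, so the $A_i$'s can split a given $F_q$ into many tiny classes unless $\ell$ is large enough to dilute this splitting; one may need to combine several wedges whose lattices $\langle \w \rangle$ jointly recover $M_\Delta$. Quantifying precisely how large $\ell$ must be requires a case analysis tied to the chain $\langle\w\rangle \subseteq M_\Delta \subseteq M$, producing the explicit constant $\ell(X)$ promised in Proposition~\ref{prop:l}.
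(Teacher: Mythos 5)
Your skeleton---decompose $\Aut(\obs_{\partial\Delta})$ into symmetric groups on the fibers $F_q$, extract from each wedge full symmetric groups on subfibers, and glue by the principle $\langle \Sym(A),\Sym(B)\rangle=\Sym(A\cup B)$ for overlapping sets of size at least two---is essentially the paper's strategy, which implements the gluing through a pushout lemma ($\langle\Aut(f),\Aut(g)\rangle=\Aut(f\star g)$ and $\obs_S\star\obs_{S'}=\obs_{S\cup S'}$, so that chaining the wedges $\w_j$ around the boundary recovers $\obs_{\partial\Delta}$ because $\cup_j\w_j=\partial\Delta$). There is, however, a genuine gap in your second step. You claim that a $\w$-group supplies elements realising any permutation of $A_{\w,q'}=\itrz(T)\cap\obs_\w^{-1}(q')$ while acting as the identity on all of $\itrz(\Delta)\setminus A_{\w,q'}$. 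Definition~\ref{def:wedge} does not give this: a (non-strict) $\w$-group only fixes $\itrz(\Delta)\setminus T$ pointwise and $\itrz(T)$ globally, so its elements may act by an \emph{uncontrolled} permutation on the lattice points of $\itrz(\Delta)$ lying on $\partial T$. Those points can sit in arbitrary fibers $F_{q''}$, so the elements you obtain need not lie in the single factor $\Sym(F_{\pi_\w(q')})$ of your product decomposition, and the transposition-generation principle cannot be invoked as stated. The paper devotes Proposition~\ref{prop:2} (upgrading to strict $(\w_j,\partial\Delta)$-groups by exhibiting wedges with $\partial\conv(\w)\cap\itrz(\Delta)=\varnothing$) and repeated conjugation manipulations of the form $\tau\circ\sigma\circ\tau^{-1}$ precisely to neutralise these boundary tails; some version of this is unavoidable.

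The second shortfall is that the covering-and-connectivity statement to which you reduce the theorem is exactly where the work lies, and you leave it as an acknowledged obstacle rather than proving it. The paper's Propositions~\ref{prop:1}--\ref{prop:l}, together with the trapezoid estimates of Lemma~\ref{lem:quadri}, are entirely devoted to verifying the conditions \textbf{(A)}, \textbf{(B)}, \textbf{(C)}: that the corner triangles $T_j\cap T_{j+1}$ carry at least $2$-to-$1$ (hence surjective) restrictions of the adjacent wedge obstructions, that a point of $\itrz(\Delta)$ can be transported into a corner triangle by sliding the apex of a wedge along $\partial\Delta$, and that the explicit constant $\ell=5\max_j\vert v_j\vert^2$ suffices. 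Your remark that several wedges must be combined so that the lattices $\langle\w\rangle$ jointly recover $M_\Delta$ correctly identifies the difficulty (this is what the pushout over all $\w_j$ accomplishes), but without the quantitative verification the proof is not complete.
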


From now on, the group $G < \Aut(\obs_{\partial\Delta})$ is assumed to contain a $\w$-group for any wedge $\w$ in $\Delta$.
In order to prove the above theorem, we will use the following notation\smallskip

$ \emph{\w(j,v)}:=\{\Delta_j\cap M, v\}, \;  \emph{w_j}:=w(j, \Delta_{j-2,j-1}), \;  \emph{T(j,v)}:=\conv\big(w(j,v)\big)$ and $ \emph{T_j}:= \conv\big(w_j\big)$\smallskip\\ 
%
for all $j\in \Z/n\Z$, $v\in M\cap \partial \Delta$, and successively use some of the following assumptions:\smallskip

\textbf{(A)}: for any $j\in \Z/n\Z$, the restrictions of $\obs_{\w_j}$ and $\obs_{\w_{j+1}}$ to $\itrz(T_j\cap T_{j+1})$ are at least $2$-to-$1$. In particular, they are surjective.\smallskip

\textbf{(B)}: if $q_j$ is the index of the affine lattice $\left\langle (\Delta_j \cup \Delta_{j+1}) \cap M\right\rangle$ in $M$ and $q:= \min_{j} \, q_j$ over all $j \in \Z/n\Z$, then $\Delta \in \nc{4}(X)$  if $q=1$ and $\Delta \in \nc{3q-2}(X)$ $q>1$.\smallskip

\textbf{(C)}: for any wedges $w:=w(j,v)$ and $w':=w(j,v')$ such that $v$ and $v'$ are consecutive on $\partial \Delta$, the restrictions of $\obs_{\w}$ and $\obs_{\w'}$ to $\itrz\big(T(j,v)\cap T(j,v')\big) $ are surjective.\smallskip

At last, for a finite set $E$ and a subset $F$, define the \emph{restriction} of a group $\widetilde G< \Aut(E)$ to $F$ as the subgroup of $\Aut(F)$ given by the restriction to $F$ of all elements in $\widetilde G$ that fix $F$ globally. While talking about the restriction of an element $g\in \widetilde G$ to $F$, we will implicitly imply that $g$ fixes $F$ globally. Observe for instance that a strict $\w_j$-group is isomorphic to its restriction to $\itrz(T_j)$.

The proof of Theorem \ref{thm:combinatorics} will be given as a concatenation of intermediate results that we state below. For the sake of clarity, the proof of these results is postponed until we prove Theorem \ref{thm:combinatorics}.

\begin{Proposition}\label{prop:1}
If \textbf{(A)} holds, then the restrictions of $G$ and $\Aut(\obs_{\partial\Delta})$ to $\itrz(T_j)$ coincide for any $j\in \Z/n\Z$.
\end{Proposition}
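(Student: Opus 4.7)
The inclusion $G|_{\itrz(T_j)} \subseteq \Aut(\obs_{\partial\Delta})|_{\itrz(T_j)}$ is immediate from $G \subseteq \Aut(\obs_{\partial\Delta})$. Conversely, any permutation of $\itrz(T_j)$ that preserves the fibers of $\obs_{\partial\Delta}$ extends by the identity on $\itrz(\Delta) \setminus \itrz(T_j)$ to an element of $\Aut(\obs_{\partial\Delta})$, so the restriction of $\Aut(\obs_{\partial\Delta})$ to $\itrz(T_j)$ coincides with $\Aut(\obs_{\partial\Delta}|_{\itrz(T_j)})$, which is simply the product of the symmetric groups over its fibers. The task is therefore to realize each such permutation as the restriction of an element of $G$.

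By hypothesis, $G$ contains a $w_j$-group, whose defining properties guarantee that its elements fix $\itrz(\Delta) \setminus T_j$ pointwise and realize every permutation in $\Aut(\obs_{w_j}|_{\itrz(T_j)})$. Because $\langle w_j \rangle \subseteq M_\Delta$, the partition of $\itrz(T_j)$ into $\obs_{w_j}$-fibers refines the partition into $\obs_{\partial\Delta}$-fibers. A standard symmetric-group generation lemma then shows that it suffices to exhibit, for each pair of distinct $\obs_{w_j}$-fibers of $\itrz(T_j)$ contained in a common $\obs_{\partial\Delta}$-fiber, a single bridging transposition $(p,q)$ realized by the restriction of some element of $G$ to $\itrz(T_j)$, with $p$ in one of the two $\obs_{w_j}$-fibers and $q$ in the other. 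Combined with the full symmetric action on each $\obs_{w_j}$-fiber, such bridges generate all of $\Aut(\obs_{\partial\Delta}|_{\itrz(T_j)})$.

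To produce the bridging transpositions, I would combine the $w_j$- and $w_{j+1}$-subgroups of $G$ via the conjugation identity $(p,r)(r,q)(p,r) = (p,q)$. Assumption \textbf{(A)} is precisely what enables this: the restrictions of $\obs_{w_j}$ and $\obs_{w_{j+1}}$ to $\itrz(T_j \cap T_{j+1})$ are each at least $2$-to-$1$, so every fiber of either obstruction map that meets the intersection has at least two representatives there. In particular, transpositions supported in $\itrz(T_j \cap T_{j+1})$ and respecting $\obs_{w_j}$ (resp.\ $\obs_{w_{j+1}}$) lie in the $w_j$-group (resp.\ $w_{j+1}$-group). Chaining these through appropriate intermediate points in $\itrz(T_j \cap T_{j+1})$ and iterating the conjugation identity produces an element of $G$ whose restriction to $\itrz(T_j)$ is the desired transposition; because each factor fixes either $\itrz(\Delta) \setminus T_j$ or $\itrz(\Delta) \setminus T_{j+1}$ pointwise, the auxiliary swaps outside $\itrz(T_j)$ cancel in the product, and $\itrz(T_j)$ is preserved globally.

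The main obstacle is the combinatorial verification that the equivalence relation on $\itrz(T_j)$ generated by chaining $\obs_{w_j}$- and $\obs_{w_{j+1}}$-equivalences through $\itrz(T_j \cap T_{j+1})$ matches the $\obs_{\partial\Delta}$-equivalence on $\itrz(T_j)$. In coordinates adapted to the edges $\Delta_j$ and $\Delta_{j+1}$, each $\obs_{w_k}$ becomes essentially a height function modulo the index $[M:\langle w_k \rangle]$ (up to sign); the joint lattice $\langle w_j \cup w_{j+1}\rangle$ governs the combined equivalence, and one must check it captures the full $\obs_{\partial\Delta}$-equivalence on the relevant domain. The $2$-to-$1$ hypothesis of \textbf{(A)} is exactly the quantitative condition needed to turn this abstract equivalence-closure into a concrete construction of elements of $G$ without running out of representatives during the chaining.
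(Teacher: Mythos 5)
Your first two paragraphs are sound: the identification of the restriction of $\Aut(\obs_{\partial\Delta})$ to $\itrz(T_j)$ with the product of symmetric groups on its fibers, the observation that the $\obs_{\w_j}$-fibers refine the $\obs_{\partial\Delta}$-fibers, and the reduction to bridging transpositions via conjugation are all in the spirit of the paper's argument. The genuine gap is the one you yourself flag as ``the main obstacle'' and then leave unresolved — and, as stated, the claim you would need is false. Chaining $\obs_{\w_j}$- and $\obs_{\w_{j+1}}$-equivalences through $\itrz(T_j\cap T_{j+1})$ generates exactly the fibers of the pushout map $\obs_{\w_j}\star\obs_{\w_{j+1}}=\obs_{\w_j\cup\w_{j+1}}$ (the paper's Lemmas on pushouts), which are governed by the lattice $\left\langle \w_j\cup\w_{j+1}\right\rangle$. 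But $\obs_{\partial\Delta}$ is governed by $M_\Delta=\left\langle \partial\Delta\cap M\right\rangle$, which receives contributions from \emph{all} edges of $\Delta$; in general $\left\langle \w_j\cup\w_{j+1}\right\rangle$ is a proper sublattice of $M_\Delta$, so two points of $\itrz(T_j)$ can lie in the same $\obs_{\partial\Delta}$-fiber while lying in different $\obs_{\w_j\cup\w_{j+1}}$-fibers. Your purely local construction cannot produce a bridge between such points, and assumption \textbf{(A)} does nothing to rule this situation out — it only guarantees enough representatives in the overlap, not that the two local lattices already generate $M_\Delta$.

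The paper closes this gap with a global induction that you would need to add. It first establishes, via the pushout lemmas, that the restriction of $G$ to $\itrz(T_1\cap T_2)$ contains that of $\Aut(\obs_{\w_1\cup\w_2})$; it then \emph{transports} this group to the next overlap $\itrz(T_2\cap T_3)$ by conjugating with transpositions from the $\w_2$-group (using the surjectivity in \textbf{(A)} to move points between consecutive overlaps), takes a new pushout with $\obs_{\w_3}$ there, and so on, running around $\Z/n\Z$ twice until every wedge has contributed and the accumulated map is $\obs_{\cup\w_k}=\obs_{\partial\Delta}$. Only after this global claim is in place does the local conjugation argument you describe (pushing $x,y\in\itrz(T_j\setminus T_{j+1})$ into the overlap and conjugating) go through. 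So your outline captures the final step of the paper's proof but is missing the key idea — the propagation of deck groups around the whole boundary — without which the bridging transpositions you need need not exist.
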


\begin{Proposition}\label{prop:2}
If \textbf{(A)} and \textbf{(B)} hold, then $G$ contains a strict $(w_j,\partial \Delta)$-group for any $j\in \nolinebreak\Z/n\Z$.
\end{Proposition}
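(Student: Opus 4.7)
The plan is to bootstrap Proposition \ref{prop:1} with the abundance of wedge groups in $G$ in order to localize permutations to $\itr(T_j)$. Fix $\tau \in \Aut(\obs_{\partial\Delta})$ fixing $\itrz(T_j)$ globally; the goal is to produce $\sigma \in G$ with $\sigma|_{\itrz(T_j)} = \tau|_{\itrz(T_j)}$ and $\sigma$ equal to the identity on $\itrz(\Delta)\setminus\itr(T_j)$.

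First, I would apply Proposition \ref{prop:1} to obtain $\sigma_0 \in G$ with $\sigma_0|_{\itrz(T_j)} = \tau|_{\itrz(T_j)}$. This $\sigma_0$ fixes $\itrz(T_j)$ globally but may permute points of $\itrz(\Delta)\setminus\itr(T_j)$. The strategy is then to construct a correction element $\sigma_1 \in G$ that fixes $\itrz(T_j)$ pointwise and satisfies $\sigma_1|_{\itrz(\Delta)\setminus\itr(T_j)} = \sigma_0|_{\itrz(\Delta)\setminus\itr(T_j)}$; the product $\sigma := \sigma_0 \sigma_1^{-1}$ will then enjoy both required properties.

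Producing $\sigma_1$ amounts to a complementary analogue of Proposition \ref{prop:1}, with the roles of $\itrz(T_j)$ and its complement exchanged. The natural supply of elements of $G$ fixing $\itrz(T_j)$ pointwise comes from the $w(k,v)$-groups for wedges $w(k,v)$ with $T(k,v)\cap \itr(T_j)=\varnothing$: any such element fixes $\itrz(\Delta)\setminus T(k,v)$ pointwise and in particular is trivial on $\itrz(T_j)$. Assumption \textbf{(B)} is precisely what guarantees enough such wedges: the conditions $\Delta\in \nc{4}(X)$ (if $q=1$) and $\Delta\in \nc{3q-2}(X)$ (if $q>1$) ensure the edges of $\Delta$ are long enough for the triangles $T(k,v)$ avoiding $\itr(T_j)$ to cover all of $\itrz(\Delta)\setminus\itr(T_j)$. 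Composing such wedge-group elements, and supplementing with applications of Proposition \ref{prop:1} to triangles $T_k$ disjoint from $T_j$, one generates a subgroup of $G$ acting trivially on $\itrz(T_j)$ whose image in $\Aut\bigl(\itrz(\Delta)\setminus\itr(T_j)\bigr)$ hits all of $\Aut(\obs_{\partial\Delta})|_{\itrz(\Delta)\setminus\itr(T_j)}$; in particular it contains $\sigma_0|_{\itrz(\Delta)\setminus\itr(T_j)}$, yielding $\sigma_1$.

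The main obstacle will be verifying this covering property in detail, especially for the diagonal points $\partial T_j\cap \itrz(\Delta)$, where the constraint $T(k,v)\cap\itr(T_j)=\varnothing$ is most restrictive. This is also where the dichotomy of \textbf{(B)} should enter: depending on whether $q=1$ or $q>1$, different geometric arrangements of covering triangles are required, and the constants $4$ and $3q-2$ should be calibrated precisely so that, for each lattice point near $T_j$, one can always find a wedge $w(k,v)$ on the opposite side of the diagonal of $T_j$ whose triangle contains that point but avoids $\itr(T_j)$.
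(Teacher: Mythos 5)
There is a genuine gap. Your reduction (produce $\sigma_0$ from Proposition \ref{prop:1}, then cancel its action outside $\itr(T_j)$ by a correction $\sigma_1$ that is trivial on $\itrz(T_j)$) is a reasonable framing, but it pushes the entire difficulty into the construction of $\sigma_1$, and the plan for that step does not hold up. First, the covering claim is unsupported and not what \textbf{(B)} provides: \textbf{(B)} is a lower bound on $\deg(\Li_{\vert C})$ tied to the lattice index $q$, and it says nothing about triangles $T(k,v)$ with $T(k,v)\cap\itr(T_j)=\varnothing$ covering $\itrz(\Delta)\setminus\itr(T_j)$. In general such triangles need not cover the complement (the constraint of avoiding $\itr(T_j)$ confines $T(k,v)$ to one side of a line through two vertices of $T_j$, which excludes large parts of $\Delta$), and the boundary points $\partial T_j\cap\itrz(\Delta)$ --- exactly the points distinguishing a strict from a non-strict group --- are the hardest to reach this way. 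Second, even granting a covering, you would still need these wedge groups to generate the \emph{specific} permutation $\sigma_0\vert_{\itrz(\Delta)\setminus\itrz(T_j)}$, over which Proposition \ref{prop:1} gives no control; your ``complementary analogue of Proposition \ref{prop:1}'' is at least as hard as the statement being proved, so the argument is circular in effect.

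The missing idea, which is how the paper actually uses \textbf{(B)}, is to work at the index $j_0$ realizing $q=q_{j_0}$ and to exhibit a small wedge $\w$ inside $T_{j_0}$ whose triangle satisfies $\partial\conv(\w)\cap\itrz(\Delta)=\varnothing$ (for instance $\w=\{(0,0),(1,0),(2,0),(3,0),(0,4)\}$ when $q=1$, and $\w=\{(0,0),\dots,(3p+1,0),(3p,3q)\}$ when $q>1$; the edge-length bounds $4$ and $3q-2$ in \textbf{(B)} are calibrated precisely so that these wedges fit). For such $\w$, a $\w$-group is automatically strict, and $\itr(\conv(\w))$ contains two lattice points in each fiber of $\obs_{\partial\Delta}$, so $G$ contains an honest transposition supported in $\itr(T_{j_0})$ within every fiber. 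Conjugating these transpositions by the elements supplied by Proposition \ref{prop:1} produces the transposition $(x,y)$ for every pair $x,y\in\itrz(T_{j_0})$ in a common fiber; these transpositions generate a strict $(\w_{j_0},\partial\Delta)$-group, which is then propagated to all $j$ by the argument of Proposition \ref{prop:1}. Your proposal never produces a single element of $G$ whose support is guaranteed to avoid $\partial T_j\cap\itrz(\Delta)$, which is the crux of the statement.
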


\begin{Proposition}\label{prop:3}
If \textbf{(A)}, \textbf{(B)} and \textbf{(C)} hold, then $G$ contains $\Aut(\obs_{\partial\Delta})$.
\end{Proposition}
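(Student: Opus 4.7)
The goal is to show $G \supset \Aut(\obs_{\partial\Delta})$, which combined with the standing hypothesis $G < \Aut(\obs_{\partial\Delta})$ forces equality. Since $\Aut(\obs_{\partial\Delta}) = \prod_{q} \Sym(\obs_{\partial\Delta}^{-1}(q))$ is generated by the transpositions $(a,b)$ with $a\neq b$ in a common fiber of $\obs_{\partial\Delta}$, it suffices to produce every such transposition in $G$. By Proposition \ref{prop:2}, $G$ contains for each $j \in \Z/n\Z$ a strict $(\w_j, \partial\Delta)$-group, which automatically supplies every transposition $(a,b)$ with $a,b \in \itrz(T_j)$ in a common fiber of $\obs_{\partial\Delta}$.

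The plan is a connectivity argument. For a fixed fiber $F := \obs_{\partial\Delta}^{-1}(q)$ and arbitrary $a, b \in F$, I will construct a chain $a = c_0, c_1, \ldots, c_k = b$ inside $F$ such that each consecutive pair $c_i, c_{i+1}$ lies in a common $\itrz(T_{j_i})$, possibly enlarged to $\itrz(T(j_i, v_i))$ for some wedge $\w(j_i, v_i)$. Granted this, each transposition $(c_i, c_{i+1})$ will lie in $G$ (from Proposition \ref{prop:2} in the canonical case, and from the $\w(j_i, v_i)$-group of Theorem \ref{thm:patch} in the extended case), and the identity $(c_0, c_{i+1}) = (c_i, c_{i+1})(c_0, c_i)(c_i, c_{i+1})$ combined with induction on $k$ yields $(a, b) = (c_0, c_k) \in G$.

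The existence of the chain is ensured by the three assumptions. Assumption \textbf{(A)} provides the essential bridges between consecutive triangles $T_j$ and $T_{j+1}$: since $\obs_{\w_j}$ is at least $2$-to-$1$ on $\itrz(T_j \cap T_{j+1})$ and $\obs_{\partial\Delta}$ factors through $\obs_{\w_j}$, the fiber $F$ meets $\itrz(T_j \cap T_{j+1})$ whenever it meets $\itrz(T_j)$. Assumption \textbf{(B)} is the quantitative input that makes the $T_j$'s and the extended triangles $T(j,v)$ large enough and overlap enough to exhaust $\itrz(\Delta)$; it is also the place where Propositions \ref{prop:1} and \ref{prop:2} are fed with the lower bound $\ell(X)$ of Theorem \ref{thm:combinatorics}. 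Assumption \textbf{(C)} is invoked when the canonical fan $\{T_j\}$ leaves points of $\itrz(\Delta)$ uncovered: sliding the apex $v$ of $\w(j,v)$ through consecutive lattice points of $\partial\Delta$ produces a family of triangles $T(j,v)$ whose consecutive members meet $F$ by the surjectivity clause of \textbf{(C)}, supplying the missing links.

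The principal obstacle is that the $\w(j,v)$-groups produced by Theorem \ref{thm:patch} are \textit{not strict}: their elements may additionally permute lattice points of $\itrz(\Delta)$ lying on $\partial T(j, v) \setminus \partial \Delta$. To turn such an element into a bona fide transposition $(c_i, c_{i+1})$ inside $F$, one must compose it with a suitable strict $\w_{j'}$-group element from Proposition \ref{prop:2} to cancel the extraneous permutation along $\partial T(j,v)$. Checking that this cancellation is always available, uniformly across every fiber $F$ and every pair of adjacent apices $v, v'$, is the combinatorial heart of the proof and is precisely the step where the explicit thresholds in assumption \textbf{(B)}, and therefore the constant $\ell(X)$ appearing in Theorem \ref{thm:combinatorics}, acquire their values.
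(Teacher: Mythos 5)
You correctly isolate the main difficulty — that the $\w(j,v)$-groups furnished by Theorem \ref{thm:patch} are not strict, so an element moving $x$ from $\itrz(T(j,v))$ to $\itrz(T(j,v'))$ may also permute lattice points sitting on $\partial T(j,v)\setminus\partial\Delta$. But you then defer the resolution of this difficulty as ``the combinatorial heart of the proof'' without carrying it out, and the reason you give for why the cancellation is available — that this is where the thresholds in \textbf{(B)} and the constant $\ell(X)$ ``acquire their values'' — is not the actual reason. The quantitative input from \textbf{(B)} is consumed in Proposition \ref{prop:2}; it plays no role in taming the extraneous support of $\w(j,v)$-group elements here.

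What makes the argument close is a geometric observation, not a quantitative one: for any wedge $\w(j,v'')$, the boundary $\partial\conv\big(\w(j,v'')\big)$ is disjoint from $\itrz(T_j\cap T_{j+1})$. Because of this, when one slides the apex $v$ from one lattice point of $\partial\Delta$ to the next (using \textbf{(C)} at each step), the composite permutation $\sigma_x := g_m\circ\cdots\circ g_1$ that carries $x$ into $\itrz(T_j\cap T_{j+1})$ has support meeting $\itrz(T_j\cap T_{j+1})$ only at $\sigma_x(x)$. The paper then conjugates: take a transposition $\tau\in G$ supported in $\itrz(T_j\cap T_{j+1})$ and containing $\sigma_x(x)$ (available by Proposition \ref{prop:2} and \textbf{(A)}); then $\sigma_x\circ\tau\circ\sigma_x^{-1}$ \emph{is} a transposition moving $x$ into the overlap zone, and the extraneous support of $\sigma_x$ is automatically cancelled because $\tau$ does not touch it. From there one propagates $x$ around $\Z/n\Z$ by conjugating with the strict $(\w_{j'},\partial\Delta)$-groups and concludes via Proposition \ref{prop:2}. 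So your chain-and-transposition plan is feasible in spirit, but the crucial cancellation is not merely ``available by choosing $\ell(X)$ large''; it rests on the disjointness of $\partial\conv(\w(j,v))$ from $\itrz(T_j\cap T_{j+1})$ and on organising the moves so that all correction happens by a single conjugation. That step is missing from your write-up, which is therefore incomplete at its central point.
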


\begin{Proposition}\label{prop:l}
The assumptions \textbf{(A)}, \textbf{(B)} and \textbf{(C)} are satisfied if $\Delta \in \aC_{\geqslant \ell}(X)$ for   
\[\ell: = 5 \cdot \max_{ j \in \Z/n\Z} \left\lbrace \vert v_j \vert^2  \right\rbrace\]
where the norms $\vert v_j \vert$ are computed with respect to any choice of coordinates $M\simeq \Z^2$.
\end{Proposition}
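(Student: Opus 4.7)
I would verify the three assumptions in the order (B), (A), (C), since the complexity of the lattice-point counting increases at each step. Throughout, I set $V:=\max_{j}|v_j|^2$, so that the hypothesis reads $\ell_j \geqslant 5V$ for all $j$, and I work in coordinates on $M\simeq\Z^2$ with $\Delta_j$ lying on the $x$-axis, $\Delta_{j-1,j}=(0,0)$, $\Delta_{j,j+1}=(\ell_j,0)$, and $v_{j-1}=(a_-,-b_-)$, $v_{j+1}=(a_+,b_+)$ primitive with $b_\pm>0$. Note that $b_\pm = |\det(v_j,v_{j\pm 1})| = q_{j-1}$, $q_j$ respectively.

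For (B), the affine lattice $\langle(\Delta_j\cup\Delta_{j+1})\cap M\rangle$ is, after translating by a common vertex, the linear sublattice $\Z v_j + \Z v_{j+1}\subset M$, whose index is $|\det(v_j,v_{j+1})|\leqslant |v_j||v_{j+1}|\leqslant V$. Hence $q \leqslant V$, which gives $3q-2\leqslant 3V\leqslant 5V=\ell$ and $4\leqslant 5V = \ell$, so $\aC_{\geqslant\ell}(X)\subset \nc{3q-2}(X)\cap \nc{4}(X)$ as required.

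For (A), I compute that $[M:\langle\w_j\rangle] = \ell_{j-1}q_{j-1}$ and $[M:\langle\w_{j+1}\rangle]=\ell_j q_j$, by essentially the same determinant computation shifted to the wedge apex. The obstruction maps factor as linear forms on $M$ modulo these indices, composed with $\pm\id$: in the chosen coordinates, $\obs_{\w_j}(x,y)=\pm y\bmod \ell_{j-1}q_{j-1}$ depends only on the $y$-coordinate, while $\obs_{\w_{j+1}}(x,y)=\pm(b_+ x - a_+ y)\bmod \ell_j q_j$ is a form transverse to $v_{j+1}$. Now the intersection $T_j\cap T_{j+1}$ contains, on each horizontal line $\{y=k\}$ with $1\leqslant k\leqslant \min(\ell_{j-1}b_-,\ell_j b_+)-1$, a segment of length at least $\ell_j - k(|a_-|/b_- + |a_+|/b_+)$. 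Using $|a_\pm|\leqslant |v_{j\pm 1}|$ and $b_\pm\geqslant 1$, and restricting to heights $k$ up to $\lfloor\tfrac{1}{2}\min(\ell_{j-1}q_{j-1},\ell_j q_j)\rfloor$, I would show by direct pigeonhole on each row that the fibres of $\obs_{\w_j}$ (constant along rows) each receive at least $2$ interior lattice points, while the fibres of $\obs_{\w_{j+1}}$ (constant along translates of $v_{j+1}$) are each hit at least twice by the union of rows. The choice $\ell_j\geqslant 5V$ gives precisely the slack needed for both maps simultaneously, since $V$ bounds both $q_{j\pm 1}$ and $|a_\pm|$.

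For (C), neighbouring wedges $\w(j,v)$ and $\w(j,v')$ have apexes that differ by a single primitive boundary vector, so the two triangles $T(j,v),T(j,v')$ share the long base $\Delta_j$ and their intersection contains at least the first row of interior lattice points above $\Delta_j$. The target groups $Q_{\w(j,v)}$ and $Q_{\w(j,v')}$ have orders bounded by products of the form $\ell_{k}q_{k}$ for the boundary data near $v$, $v'$, and the corresponding linear forms describing the obstruction maps restrict to arithmetic progressions along the bottom row; once $\ell_j\geqslant 5V$, these progressions cover all residue classes and yield surjectivity.

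The main obstacle, and the reason the explicit constant is $5V$ rather than a smaller multiple of $V$, is balancing three competing quantities in (A): the order $\ell_{j\pm 1}q_{j\pm 1}$ of the target, the height of $T_j\cap T_{j+1}$ along which the triangles narrow by a slope bounded only by $|v_{j\pm 1}|$, and the factor $2$ coming from the $\pm\id$ quotient defining $Q_{\w}$. A careful accounting of these three contributions, each of size $O(V)$, plus the multiplicative overhead needed for the 2-to-1 condition, yields the constant $5V$.
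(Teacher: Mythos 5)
Your verification of \textbf{(B)} is correct and coincides with the paper's: $q\leqslant q_j=\vert\det(v_j,v_{j+1})\vert\leqslant\vert v_j\vert\cdot\vert v_{j+1}\vert\leqslant\max_i\vert v_i\vert^2$, so $\ell=5\max_i\vert v_i\vert^2\geqslant\max(4,\,3q-2)$. But \textbf{(B)} is the trivial part of the proposition; the content lies in \textbf{(A)} and \textbf{(C)}, and there your proposal does not prove anything. The decisive quantitative steps are replaced by declarations ("direct pigeonhole on each row", "gives precisely the slack needed", "a careful accounting \dots yields the constant $5V$"), which assert the conclusion rather than derive it.

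Beyond the incompleteness, the setup for \textbf{(A)} contains a concrete error. The set $T_j\cap T_{j+1}$ is not the part of $\Delta$ lying between the edges $\Delta_{j-1}$ and $\Delta_{j+1}$: it is the triangle $ABO$, where $A=\Delta_{j-1,j}$, $B=\Delta_{j,j+1}$ and $O$ is the intersection point of the two \emph{diagonals} of $\conv(\w_j\cup\w_{j+1})$, i.e.\ of the segments joining $A$ to $\Delta_{j+1,j+2}$ and $B$ to $\Delta_{j-2,j-1}$. In your adapted coordinates its horizontal width at height $k$ is $\ell_j-k(a_-/b_-+a_+/b_+)-k\ell_j/(\ell_{j-1}b_-)-k\ell_j/(\ell_{j+1}b_+)$; your bound $\ell_j-k(\vert a_-\vert/b_-+\vert a_+\vert/b_+)$ omits the two diagonal terms, which are of order $\ell_j/2$ exactly at the heights $k\approx\tfrac12\ell_{j\pm1}q_{j\pm1}$ you must reach to meet every fiber of $\obs_{\w_j}$ and $\obs_{\w_{j+1}}$ twice. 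Controlling how far up the point $O$ sits is precisely the difficulty of \textbf{(A)}, and it is untouched. There is also a coordinate inconsistency: the hypothesis $\ell_{j'}\geqslant 5\max_i\vert v_i\vert^2$ is stated in one fixed coordinate system, whereas your bounds $\vert a_\pm\vert\leqslant\vert v_{j\pm1}\vert$ are taken after a unimodular change making $v_j=(1,0)$, under which the norms $\vert v_{j\pm1}\vert$ are no longer controlled by the original constant $V$. The paper circumvents both problems by bounding the affine-invariant ratio $AO/AC$ from below via an explicit trigonometric formula for the diagonals of a convex quadrilateral (Lemma \ref{lem:quadri}), combined with the identity $\vert\Delta_{j-1}\vert\cdot\vert\Delta_j\vert\cdot\sin(\theta)=q_j\,\length(\Delta_{j-1})\length(\Delta_j)$ in the original coordinates, and only afterwards changes coordinates to count lattice points in the resulting trapezoid $AA'B'B$. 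Your row-by-row counting could conceivably be pushed through, but only after the diagonals are accounted for and the metric estimates are made coordinate-consistent; as written, \textbf{(A)} and \textbf{(C)} remain unproved.
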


\begin{Remark}
The constant $\ell$ of Proposition \ref{prop:l} is not intrinsic to the toric surface $X$ as the norms $\vert v_j \vert$ depend on the choice of coordinates on $M$. However, the constant $\ell(X)$ given as the minimum of $\lceil\ell\rceil$ for all possible choice of coordinates $M\simeq \Z^2$ is intrinsic to $X$ but harder to compute.
\end{Remark}

\begin{proof}[Proof of Theorem \ref{thm:combinatorics}]
Taking $\ell:=\ell(X)$ as defined in the above remark, the statement follows from Propositions \ref{prop:3} and \ref{prop:l}.
\end{proof}

The proof of Proposition \ref{prop:1} will require some extra material. Recall that the \emph{pushout} of a pair of maps $f:E\rightarrow F$ and $g:E\rightarrow G$ between finite sets is a pair of maps $\iota_f: F\rightarrow H$ and $\iota_g: G\rightarrow H$ satisfying $\iota_f\circ f = \iota_g\circ g$ that is universal for this property, that is for any pair of maps $j_f: F\rightarrow H'$ and $j_g: G\rightarrow H'$ such that $j_f\circ f = j_g\circ g$, there exists a unique map $u: H\rightarrow H'$ such that $j_f=u\circ \iota_f$ and $j_g=u\circ \iota_g$. In particular, the pushout is unique up to a unique isomorphism. We denote by $\emph{f\star g}$ the map $\iota_f\circ f = \iota_g\circ g$ and refer to it  as the \emph{pushout map}.

The pushout can be constructed explicitly. Indeed, we can take $H=\big(F \coprod G\big) \big/ \sim$ where $\sim$ is the finest equivalence relation that identifies pairs $(x,y)\in F\times G$ with a common preimage in $E$, and $\iota_f: F\rightarrow H$ and $\iota_g: F\rightarrow G$ are the natural projections. 

Observe that the pushout map $f\star g : E \rightarrow H$ is surjective if and only if $f$ and $g$ are. In the latter case, the map $f\star g$ can also be constructed as the quotient map $E \rightarrow E\big/\approx$ where $\approx$ is the finest equivalence relation that identifies pairs $(x,y)\in E^2$ such that $y\in g^{-1}(g(f^{-1}(f(x))))$.

\begin{Lemma}\label{lem:deckpushout}
Let $f:E\rightarrow F$ and $g:E\rightarrow G$ be two surjective maps between finite sets. Then, we have $\left\langle \Aut(f), \Aut(g)\right\rangle= \Aut(f\star g)$.
\end{Lemma}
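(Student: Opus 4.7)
The plan is to use the explicit construction of $f\star g$ recalled just above the statement: since $f,g$ are surjective, $f\star g$ is the quotient $E\to E/{\approx}$, where $\approx$ is the equivalence relation on $E$ generated by the two relations ``$f(x)=f(y)$'' and ``$g(x)=g(y)$''. In particular, every $f$-fiber and every $g$-fiber is contained in a single $\approx$-class. Since $f\star g$ is constant on each $\approx$-class $C$, we have
\[
\Aut(f\star g)=\prod_{C} \Sym(C),
\]
the product ranging over the $\approx$-classes.

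The forward inclusion $\left\langle\Aut(f),\Aut(g)\right\rangle\subseteq\Aut(f\star g)$ is straightforward: an element of $\Aut(f)$ permutes each $f$-fiber to itself, hence respects the relation ``$f(x)=f(y)$'', and therefore respects $\approx$; the same holds for $\Aut(g)$. For the reverse inclusion, by the product decomposition above it suffices to fix an $\approx$-class $C$ and show that $\Sym(C)$ is generated by the ``allowed'' transpositions $(a\ b)$ with $a,b\in C$ and either $f(a)=f(b)$ or $g(a)=g(b)$; each such transposition extends by the identity on $E\setminus\{a,b\}$ to an element of $\Aut(f)$ or $\Aut(g)$.

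Since $\Sym(C)$ is generated by \emph{all} transpositions, it suffices to express every transposition $(x\ y)$ with $x,y\in C$ as a product of allowed ones. By definition of $\approx$ there is a chain $x=x_{0},x_{1},\ldots,x_{k}=y$ in $C$ in which each consecutive pair $(x_{i},x_{i+1})$ shares an $f$-fiber or a $g$-fiber, so that each transposition $\tau_{i}:=(x_{i}\ x_{i+1})$ is allowed. The key identity is
\[
(x_{0}\ x_{i+1})=\tau_{0}\,\tau_{1}\cdots\tau_{i-1}\,\tau_{i}\,\tau_{i-1}^{-1}\cdots\tau_{0}^{-1},
\]
obtained inductively from the conjugation rule $\sigma(a\ b)\sigma^{-1}=(\sigma(a)\ \sigma(b))$: assuming $(x_{0}\ x_{i})$ is already a product of allowed transpositions, conjugate $\tau_{i}=(x_{i}\ x_{i+1})$ by it to obtain $(x_{0}\ x_{i+1})$. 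Taking $i+1=k$ yields $(x\ y)$ as a product of allowed transpositions, which finishes the proof.

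There is no serious obstacle here; the only mildly subtle point is making sure the pushout description (valid because $f$ and $g$ are surjective) really does yield $\Aut(f\star g)=\prod_{C}\Sym(C)$, so that the reduction to generating $\Sym(C)$ by allowed transpositions is justified.
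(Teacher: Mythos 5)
Your proof is correct and follows essentially the same route as the paper's: both reduce to generating the symmetric group on each pushout fiber by transpositions supported on $f$- or $g$-fibers, and both obtain a general transposition by conjugation. The paper compresses your chain argument into a single conjugation $(x,y)=(x',y)\circ(x,x')\circ(x',y)$ using the two-step relation $y\in g^{-1}(g(f^{-1}(f(x))))$ as its generating set, but the underlying idea is identical.
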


\begin{proof}
First, it is clear that $\left\langle \Aut(f), \Aut(g)\right\rangle$ is a subgroup of  $\Aut(f\star g)$ since $f\star g$ factors through $f$ and $g$. From the above description $E \rightarrow E\big/\approx$ of the pushout map $f\star g$, we deduce that it is enough to prove that $\left\langle \Aut(f), \Aut(g)\right\rangle$ contains the transposition sending $x$ to $y$ for any pair of distinct elements $x,y \in E$ such that $y\in g^{-1}(g(f^{-1}(f(x))))$. By assumption, there exists $x' \in (f^{-1}(f(x))$ such that $y\in g^{-1}(g(x'))$. We assume that $x$, $x'$ and $y$ are pairwise distinct (the two other cases will be obvious). Then, the sought transposition $(x,y)$ sending $x$ to $y$ is given as the element $(x',y)\circ (x,x')\circ (x',y)$ where $(x,x')\in \Aut(f)$ and $(x',y)\in \Aut(g)$. The transposition $(x,y)$ is therefore an element of $\left\langle \Aut(f), \Aut(g)\right\rangle$.
\end{proof}

\begin{Lemma}\label{lem:push}
For any two subsets $S, S' \subset M_\R$ such that $\obs_{S}$ and $\obs_{S'}$ are surjective and $S\cap S'\neq \varnothing$, we have that $\obs_{S} \star \obs_{S'} = \obs_{S\cup S'}$.
\end{Lemma}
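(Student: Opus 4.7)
The plan is to identify the pushout $\obs_S \star \obs_{S'}$ with $\obs_{S\cup S'}$ by building a canonical comparison map between them via the universal property, and then showing that this map is a bijection. First, since $S \cap S' \neq \varnothing$, the affine sublattice $\langle S \cup S' \rangle$ is well-defined and equals $\langle S \rangle + \langle S' \rangle$, so both $\langle S \rangle$ and $\langle S' \rangle$ sit inside $\langle S \cup S' \rangle$. This induces natural surjective quotient maps $\pi_S : Q_S \to Q_{S\cup S'}$ and $\pi_{S'} : Q_{S'} \to Q_{S\cup S'}$ compatible with the $\pm\id$ actions, and a direct check gives $\pi_S \circ \obs_S = \pi_{S'} \circ \obs_{S'} = \obs_{S\cup S'}$. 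By the universal property of the pushout, this produces a unique map $u : H \to Q_{S\cup S'}$ from the pushout target $H$ such that $u \circ (\obs_S \star \obs_{S'}) = \obs_{S\cup S'}$, and the lemma reduces to the bijectivity of $u$.

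Surjectivity of $u$ is immediate: $\pi_S$ is a quotient map hence surjective, $\obs_S$ is surjective by hypothesis, so $\obs_{S \cup S'} = \pi_S \circ \obs_S$ is surjective, and this forces $u$ to be surjective. For injectivity, the fact that both $\obs_S$ and $\obs_{S'}$ are surjective allows me to use the concrete description $H = \itrz(\Delta)/{\approx}$, where $\approx$ is the equivalence relation generated by the fibers of $\obs_S$ and of $\obs_{S'}$. Injectivity of $u$ then reduces to the following claim: whenever $\obs_{S\cup S'}(x) = \obs_{S\cup S'}(y)$ for $x, y \in \itrz(\Delta)$, one can connect $x$ to $y$ by a chain inside $\itrz(\Delta)$ whose consecutive steps are $\sim_{\obs_S}$- or $\sim_{\obs_{S'}}$-equivalences.

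To build such a chain, write $x = \epsilon y + s + s'$ with $\epsilon \in \{\pm 1\}$, $s \in \langle S \rangle$, $s' \in \langle S' \rangle$. The natural intermediate $z := x - s = \epsilon y + s' \in M$ lies in the $\obs_S$-class of $x$ and the $\obs_{S'}$-class of $y$; if it happens to lie in $\itrz(\Delta)$, the three-term chain $x, z, y$ concludes the proof. The main obstacle is precisely that $z$ need not lie in $\itrz(\Delta)$. To bypass this, I would model each fiber $F := \obs_{S\cup S'}^{-1}(q'') \subset \itrz(\Delta)$ by a bipartite graph $G_{q''}$ with vertex set $\pi_S^{-1}(q'') \sqcup \pi_{S'}^{-1}(q'')$ and an edge $\{\obs_S(x), \obs_{S'}(x)\}$ for each $x \in F$. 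The $\approx$-classes contained in $F$ are exactly the connected components of $G_{q''}$, so it suffices to show $G_{q''}$ is connected; surjectivity of $\obs_S$ and $\obs_{S'}$ ensures that every vertex has at least one incident edge, and repeated application of the candidate-intermediate construction, using surjectivity to lift each new intermediate back into $\itrz(\Delta)$, yields the required paths. This last combinatorial step, bootstrapping surjectivity into full graph connectivity, is where the bulk of the work lies.
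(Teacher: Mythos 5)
You reduce the lemma, correctly, to the following claim: any two points of $\itrz(\Delta)$ lying in the same fiber of $\obs_{S\cup S'}$ are joined by a chain inside $\itrz(\Delta)$ whose consecutive steps lie in a common fiber of $\obs_S$ or of $\obs_{S'}$. This is in fact exactly where the paper's own argument lands: after normalising $0\in S\cap S'$ so that $\obs_S$, $\obs_{S'}$ become the restrictions to $\itrz(\Delta)$ of the group quotients $M\to M/\langle S\rangle$ and $M\to M/\langle S'\rangle$ (composed with the $\pm\id$ quotient), the paper states that the pushout map equals the restriction of $M\to M/\langle S\cup S'\rangle$, which is precisely this connectivity assertion in other words. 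So your universal-property set-up and your identification of the obstacle (the intermediate $z=x-s$ need not lie in $\itrz(\Delta)$) reproduce the essential content of the paper's proof.

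The problem is that you then explicitly leave the crux unproved. You set up the bipartite graphs $G_{q''}$ and observe that surjectivity of $\obs_S$ and $\obs_{S'}$ forbids isolated vertices, but then concede that the connectivity of $G_{q''}$ ``is where the bulk of the work lies.'' Having no isolated vertex is far from being connected, and the difficulty is genuine: when $z=x-s$ falls outside $\itrz(\Delta)$, surjectivity of $\obs_S$ merely supplies some $z'\in\itrz(\Delta)$ with $\obs_S(z')=\obs_S(z)$, with no guarantee that $\obs_{S'}(z')=\obs_{S'}(y)$, so iterating the candidate-intermediate construction does not obviously terminate on the same component. A complete proof has to close this gap, presumably by exploiting structure your argument has not yet used --- that $S$ and $S'$ are subsets of $\partial\Delta\cap M$ (as required by Definition \ref{def:wedge}) rather than arbitrary subsets of $M_\R$, together with the convexity of $\Delta$ --- or else by arguing directly that the set-theoretic pushout of the two restricted quotient maps coincides with the restriction of the group pushout $M\to M/(\langle S\rangle+\langle S'\rangle)$. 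As written, the proposal identifies the right strategy and the right hard point, but stops exactly at the step that requires justification, so it cannot be accepted as a proof.
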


\begin{proof}
There is no loss of generality in assuming that $0\in S\cap S'$. In particular, $\left\langle S\right\rangle$ and $\left\langle S'\right\rangle$ are honest lattices and  the intermediate maps $f: \itrz(\Delta) \rightarrow M\big/ \left\langle S\right\rangle$ and $g: \itrz(\Delta) \rightarrow M\big/ \left\langle S'\right\rangle$ are group homomorphisms. Then, it is clear that $f\star g$ is the quotient  $\itrz(\Delta) \rightarrow M\big/ \left\langle S\cup S'\right\rangle$ since the latter is a reformulation of the quotient map $\itrz(\Delta) \rightarrow \itrz(\Delta)\big/ \approx$ for the appropriate equivalence relation $\approx$. Composing every map above with the quotient by $\pm \id$, we deduce the assertion.
\end{proof}

\begin{proof}[Proof of Proposition \ref{prop:1}]
We have to show that for any pair of distinct elements $x,y \in \itrz(T_j)$ in the same fiber of $\obs_{\partial\Delta}$, the restriction of $G$ to $\itrz(T_j)$ contains the transposition $(x,y)$. First, we claim that the restrictions of $G$ and $\Aut(\obs_{\partial\Delta})$ to $\itrz(T_j\cap T_{j+1})$ coincide for any $j\in \Z/n\Z$. The statement follows from the claim if $x,y \in \itrz(T_j\cap T_{j+1})$. Let us therefore assume that  
$x, y \in \itrz(T_j\setminus T_{j+1})$ (the remaining cases will be obvious). By \textbf{(A)}, we can find distinct $x', y'\in \itrz(T_j\cap T_{j+1})$ and elements $\sigma_x, \sigma_y$ in the $\w_j$-subgroup of $G$ whose restrictions to $\itrz(T_j)$ are the transpositions  $(x,x')$ and $(y,y')$ respectively. This is possible since the restriction of $\obs_{\w_j}$ to $\itrz(T_j)$ is at least $2$-to-$1$. Since $x'$ and $y'$ are still in the same fiber of $\obs_{\partial\Delta}$, it follows from the claim that there exists an element $\tau\in G$ whose restriction to  $\itrz(T_j\cap T_{j+1})$ is the transposition $(x',y')$. In turn, the permutation $\tau\circ \sigma_x \circ \tau^{-1}$ fixes $\itrz(T_j)$, even though $\tau$ may not,  and restricts to $(x,y')$. We conclude that $\sigma_y\circ (\tau\circ \sigma_x \circ \tau^{-1}) \circ \sigma_y^{-1}$  fixes $\itrz(T_j)$ and restricts to the sought transposition $(x,y)$.

It remains to prove the claim. By \textbf{(A)}, the restrictions of $\obs_{\w_1}$ and $\obs_{\w_2}$ to $\itrz(T_1 \cap T_2)$ are surjective. Since $G$ contains a $w_1$- and a $w_2$-group, it follows from Lemmas \ref{lem:deckpushout} and \ref{lem:push} that the restriction of $G$ to $\itrz(T_1 \cap T_2)$ contains the restriction of $\Aut(\obs_{\w_1\cup\w_2})$ to $\itrz(T_1 \cap T_2)$. By similar arguments as in the previous paragraph, we can show that the restriction of $G$ to $\itrz(T_2 \cap T_3)$ also contains the restriction of $\Aut(\obs_{\w_1\cup\w_2})$ to $\itrz(T_2 \cap T_3)$. Indeed, for two distinct points $x, y \in \itrz(T_2 \cap T_3)$ in the same fiber of $\Aut(\obs_{\w_1\cup\w_2})$, we can find $\sigma_x, \sigma_y$ in the $w_2$-subgroup of $G$ that restrict to transpositions on $\itrz(T_2)$ sending respectively $x$ and $y$ in $\itrz(T_1 \cap T_2)$. Again, this is possible since the restriction $\obs_{\w_2}$ to $\itrz(T_1 \cap T_2)$ is surjective. Now, by the above result,  we can find $\tau\in G$ in the restriction of $\Aut(\obs_{\w_1\cup\w_2})$ to $\itrz(T_1 \cap T_2)$ that restricts to the transposition sending $\sigma_x(x)$ to $\sigma_y(y)$. It follows that $(\sigma_x\circ\sigma_y)\circ \tau \circ (\sigma_x\circ\sigma_y)^{-1}\in G$ fixes $\itrz(T_2 \cap T_3)$ and restricts to the transposition $(x,y)$. Now that we have proven that the restriction of $G$ to $\itrz(T_2 \cap T_3)$ also contains the restriction of $\Aut(\obs_{\w_1\cup\w_2})$ to $\itrz(T_2 \cap T_3)$, it is clear that we can repeat the same arguments to prove that the restriction of $G$ to $\itrz(T_3 \cap T_4)$ contains the restriction of $\Aut(\obs_{\w_1\cup\w_2\cup \w_3})$ to $\itrz(T_3 \cap T_4)$ and so on. By running this induction twice around $\Z/n\Z$, it implies that the restriction of $G$ to $\itrz(T_j \cap T_{j+1})$ contains the restriction of $\Aut(\obs_{\cup\w_j})$ to $\itrz(T_j \cap T_{j+1})$. Since $\cup\w_j=\partial \Delta$, the claim follows.
\end{proof}

\begin{proof}[Proof of Proposition \ref{prop:2}]
Let $j_0 \in \Z/n\Z$ be such that $q= q_{j_0}$.  Under the present assumptions, we claim that for any fiber of $\obs_{\partial \Delta}$, there exists a transposition $\tau\in G$ with support in $\itrz(T_{j_0})$. Then, for any distinct pair $x,y \in \itrz(T_{j_0})$ in the same fiber of $\obs_{\partial \Delta}$, the group $G$ contains the transposition $(x,y)$. Indeed, for a transposition $\tau \in G$ supported on the same fiber as $x$ and $y$, Proposition \ref{prop:1} ensures that there exist $\sigma_x, \sigma_y \in G$ that restrict to transpositions in $\itrz(T_{j_0})$ such that $\tau=(\sigma_x(x),\sigma_y(y))$. Therefore, the element $(\sigma_x\circ\sigma_y)\circ \tau \circ (\sigma_x\circ\sigma_y)^{-1}\in G$ is the transposition $(x,y)$. In particular, the group $G$ contains a strict $(\w_{j_0},\partial \Delta)$-group. The arguments of the previous proof imply that $G$ contains a strict $(\w_{j},\partial \Delta)$-group for any $j\in \Z/n\Z$ and the result follows.

It remains to prove the claim. To do so, choose coordinate of $M$ such that $\Delta_{j}$ is the segment joining $(0,0)$ to $(\ell,0)$ and such that the lattice point on $\Delta_{j-1}$ adjacent to $(0,0)$ has coordinates $(p,q)$ with $0\leq p<q$ (with $p=0$ if and only if $q=1$). 

If $q=1$, consider the wedge $\w=\big\lbrace(0,0), \, (1,0), \, (2,0), \, (3,0), \, (0,4) \big\rbrace$. Since $\partial \conv(\w) \cap \itrz(\Delta) = \varnothing$ and $\itrz\big(\conv(\w)\big)$ contains $(1,1)$ and $(2,1)$, any $\w$-group (and therefore $G$) contains the transposition of the latter points. Since $\obs_{\partial\Delta}$ has only one fiber when $q=1$, the claim is proven in this case.

If $q>1$, consider the wedge $\w=\big\lbrace(0,0), \, (1,0),..., \, (3p+1,0), \, (3p,3q) \big\rbrace$. Once again, we have that $\partial \conv(\w) \cap \itrz(\Delta) = \varnothing$ since the vector $(3p,3q)-(3p+1,0)=(-1,3q)$ is primitive. Observe also that the horizontal section of $\itrz\big(\conv(\w)\big)$ at height $m$ for any $1\leqslant m \leqslant \lfloor q/2 \rfloor$ has length at least $5p/2\geqslant 5/2$ and therefore contains at least two lattice points. It follows that any $\w$-group (and therefore $G$) contains the transposition of the latter points. Since $\obs_{\partial\Delta}(u,v)=\dist(v,q'\Z)$ for some divisor $q'$ of $q$, it follows that any fiber of $\obs_{\partial\Delta}$ contains the support of at least one of the above transpositions.
\end{proof}

\begin{proof}[Proof of Proposition \ref{prop:3}]
For any $x \in \itrz(\Delta)$, we claim that there exists $j\in \Z/n\Z$ such that $G$ contains a permutation $\sigma_x$ sending $x$ to $\itrz(T_j\cap T_{j+1})$ and such that $\sigma_x(x)$ is the only element of the support of $\sigma_x$ in $\itrz(T_j)$. Then, Proposition \ref{prop:2} and \textbf{(A)} imply the existence of a transposition $\tau \in G$ supported on $\itrz(T_j\cap T_{j+1})$ that contains $\sigma_x(x)$. In turn, it implies the existence of a transposition in $G$, namely $\sigma_x\circ \tau\circ \sigma_x^{-1}$ that maps $x$ into $\itrz(T_j\cap T_{j+1})$. Conjugating by another transposition in the strict $(\w_j,\partial \Delta)$-subgroup of $G$ if necessary, we can construct a transposition in $G$ that maps $x$ into $\itrz(T_j \cap T_{j-1})$ and inductively to any $\itrz(T_{j'})$. From there, it is clear by Proposition \ref{prop:2} that $G$ contains the transposition $(x,y)$ for any distinct $x,y \in \itrz(\Delta)$ in the same fiber of $\obs_{\partial \Delta}$.

It remains to prove the claim. Below, we assume that $n\geqslant 4$ since the case $n=3$ is trivial. In order to prove the claim, observe that there exist a wedge $\w(j,v)$ such that  $x \in \itrz\big(T(j,v)\big)$ and such that $v$ is outside of the relative interiors of $\Delta_{j-1}$ and $\Delta_{j+1}$. Indeed, there clearly exist indices $j, \, k \in \Z/n\Z$ such that $x \in \itrz\big(\conv(\Delta_j \cup \Delta_k)\big)$. If $\vert k-j\vert=1$, we can take $v$ to be the vertex $\Delta_k\cap\Delta_j$ and we are done. If not, the set $\conv(\Delta_j \cup \Delta_k)$ is a quadrilateral. If $x$ is not the intersection point of the diagonals of $\conv(\Delta_j \cup \Delta_k)$, it lies in the interior of one of the four triangles defined by the diagonals and we are done. Otherwise, take any $v \in \Delta_k \cap M$ other than a vertex.

Let $\w^0:= \w(j,v)$ be such that $x \in \itrz\big(\conv(\w^0)\big)$. If $\w^0$ is either $T_j$ or $T_{j+1}$, we are done by \textbf{(A)}. If not, then there exists $v' \in (\partial \Delta \cap M)\setminus \Delta_j$ that is consecutive to $v$ on $\partial \Delta \cap M$. In particular, the wedge $\w^1:=\w(j,v')$ is well defined. Up to a change of coordinates, we can assume without loss of generality that $v'$ is consecutive to $v$ when running along $\partial \Delta$ clockwise. By \textbf{(C)}, 
we can find in any $\w^0$-group (and therefore in $G$) a permutation $g_1$ whose restriction to $\itrz(\Delta) \setminus \partial \conv(\w^0)$ is a transposition sending $x$ in $\itrz\big(\conv(\w^1)\big)$. 
Observe that the support of $g_1$ is disjoint from $\itrz\big(T_j\cap T_{j+1}\big)$, except possibly for $g_1(x)$. Indeed, for any wedge $\w:=\w(j,v'')$, the boundary $\partial \conv(\w)$ is disjoint from $\itrz\big(T_j\cap T_{j+1}\big)$. If $g_1(x)$ is in $\itrz\big(T_j\cap T_{j+1}\big)$, then we are done. Otherwise, we apply the same procedure to $x_1:=g_1(x)$ as a point in $\itrz\big(\conv(\w_1)\big)$ and obtain a point $x_2:=g_2(g_1(x))$ in $\w^2$, and so on.

After finitely many steps $m$, the wedge $\w^m$ is $\w(j, \Delta_{j+1,j+2})$ and we constructed $g_1,\cdots,g_m\in G$ such that $x_m:=g_m\circ \cdots\circ g_1(x)$ lies in $\itrz\big(\conv(w^n)\big)= \itrz(T_{j+1})$ and such that the support of $g_m\circ \cdots\circ g_1$ is disjoint from $\itrz\big(T_j\cap T_{j+1}\big)$, except possibly for $x_m$. If $x_m$ lies in the latter, we are done. Otherwise, By Proposition \ref{prop:2}, there exists a transposition $g_{n+1}\in G$ sending $x_m$ in $\itrz\big(T_j\cap T_{j+1}\big)$. The sought permutation is then the product $g_{m+1}\circ \cdots\circ g_1$.
\end{proof}

In order to prove Proposition \ref{prop:l}, we will need the following elementary fact. 

\begin{Lemma}\label{lem:quadri}
$\mathbf{1.}$ Let $A,B,C,D \in \R^2$ such that $ABCD$ is a convex quadrilateral and denote $O:=AC \cap BD$ and $\theta_A$ and $\theta_B$ the positive angles in $ABCD$ at the vertices $A$ and $B$. Then, we have 
\[ \dfrac{AO}{AC} = \frac{AB \cdot AD\cdot \sin(\theta_A)}{AB \cdot AD\cdot \sin(\theta_A)+AB \cdot BC \cdot \sin(\theta_B)-AD \cdot BC\cdot \sin(\theta_A+\theta_B)}.\]
$\mathbf{2.}$ Assume moreover that either $AB$ and $CD$ are parallel or the lines $(AB)$ and $(CD)$ intersect at a point $P$ such that $A \in PB$. If $\theta$ denotes the angle $\overbow{APD}$ (set $\theta=0$ in the parallel case), then we have 
\[ \dfrac{AO}{AC} = \frac{AB \cdot AD\cdot \sin(\theta_A)}{AB \cdot AD\cdot \sin(\theta_A)+ CD \cdot \big( AD \sin(\theta_A- \theta)+AB \sin(\theta)\big)}.\]
\end{Lemma}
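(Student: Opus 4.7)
\medskip

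\noindent\textbf{Proof plan for Lemma \ref{lem:quadri}.}
The plan is to reduce the ratio $AO/AC$ to a ratio of triangle areas, compute these areas via the trigonometric formula $\tfrac{1}{2}ab\sin(\theta)$, and finally specialise to the parallel-or-pencil configuration by rewriting $BC\sin\theta_B$ and $BC\cos\theta_B$ in terms of $CD$ and $\theta$.

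For Part 1, the first step is to identify $AO/AC$ with $\area(ABD)/\area(ABCD)$. This uses the classical observation that the triangles $ABD$ and $CBD$ share the base $BD$, so their area ratio equals the ratio of the distances from $A$ and $C$ to the line $(BD)$; these distances, in turn, are in the ratio $AO/OC$ because $A$, $O$, $C$ are collinear. Therefore $AO/OC=\area(ABD)/\area(CBD)$, and adding denominators yields $AO/AC=\area(ABD)/\area(ABCD)$. The second step is to evaluate both areas. The numerator is immediate: $\area(ABD)=\tfrac{1}{2}\,AB\cdot AD\cdot\sin\theta_A$. For the denominator, I would set coordinates $A=(0,0)$, $B=(AB,0)$, $D=(AD\cos\theta_A,AD\sin\theta_A)$, $C=(AB-BC\cos\theta_B,BC\sin\theta_B)$ and apply the shoelace formula, which gives
\[
2\,\area(ABCD)=AB\cdot AD\sin\theta_A+AB\cdot BC\sin\theta_B-AD\cdot BC\sin(\theta_A+\theta_B),
\]
after using $\sin(\theta_A+\theta_B)=\sin\theta_A\cos\theta_B+\cos\theta_A\sin\theta_B$. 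Plugging this into $AO/AC=\area(ABD)/\area(ABCD)$ yields the claimed formula.

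For Part 2, the strategy is to use the additional hypothesis to eliminate $BC$ and $\theta_B$ in favour of $CD$ and $\theta$. In the same coordinates, the point $P$ (if it exists) lies on the negative $x$-axis because $A\in PB$, and the line $(PD)=(PC)$ makes angle $\theta$ with the positive $x$-axis. Since $D$ lies between $P$ and $C$ (a consequence of the convexity of $ABCD$ together with $A\in PB$; the parallel case $\theta=0$ is handled identically with the vector $(\cos\theta,\sin\theta)=(1,0)$), we obtain
\[
C=D+CD\cdot(\cos\theta,\sin\theta)=\bigl(AD\cos\theta_A+CD\cos\theta,\;AD\sin\theta_A+CD\sin\theta\bigr).
\]
Comparing with $C=(AB-BC\cos\theta_B,BC\sin\theta_B)$ yields the two identities
\[
BC\sin\theta_B=AD\sin\theta_A+CD\sin\theta,\qquad BC\cos\theta_B=AB-AD\cos\theta_A-CD\cos\theta.
\]

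The final step is a short trigonometric computation: multiply the two identities by $\cos\theta_A$ and $\sin\theta_A$ respectively and add, to get
\[
BC\sin(\theta_A+\theta_B)=AB\sin\theta_A-CD\sin(\theta_A-\theta).
\]
Substituting this together with the first identity into the denominator produced by Part 1, the two copies of $AB\cdot AD\sin\theta_A$ arising from $AB\cdot BC\sin\theta_B$ and $-AD\cdot BC\sin(\theta_A+\theta_B)$ cancel, leaving
\[
AB\cdot AD\sin\theta_A+CD\bigl(AB\sin\theta+AD\sin(\theta_A-\theta)\bigr),
\]
which is exactly the denominator in the Part 2 formula. No step is really hard: the only mild subtlety is checking that the orientation conventions (in particular, that $D$ lies between $P$ and $C$ when $ABCD$ is convex with $A\in PB$) give the correct sign of $\theta$ and of $CD\cos\theta$, so that the two identities above hold without sign adjustments.
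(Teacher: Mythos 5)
Your proof is correct, and it takes a genuinely different route from the paper's. For Part 1, you observe directly that $AO/AC=\area(ABD)/\area(ABCD)$ (triangles $ABD$, $CBD$ share base $BD$, and the height ratio equals $AO/OC$), then evaluate $\area(ABCD)$ by the shoelace formula in explicit coordinates. The paper instead introduces an auxiliary angle $\theta'=\widehat{BAO}$, solves for $AO$ from the decomposition $\area(ABD)=\area(AOD)+\area(ABO)$, obtains $AC$ from the law of sines in triangle $ABC$, and then eliminates $\theta'$ via a product-to-sum identity. Your version avoids the auxiliary angle and the non-obvious trigonometric identity, at the cost of choosing coordinates. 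For Part 2, you again diverge: you use the hypothesis to write $C=D+CD\cdot(\cos\theta,\sin\theta)$ (after checking the order $P,D,C$ on the transversal), derive the two scalar identities $BC\sin\theta_B=AD\sin\theta_A+CD\sin\theta$ and $BC\sin(\theta_A+\theta_B)=AB\sin\theta_A-CD\sin(\theta_A-\theta)$, and substitute into the Part 1 denominator, where the cross terms cancel. The paper instead introduces the auxiliary point $B'\in BD$ with $AB'\parallel CD$, uses the trapezoid $AB'CD$ to get $AO/AC=AB'/(AB'+CD)$, and computes $AB'$ by an area argument analogous to Part 1. Both approaches are sound; your Part 1 is arguably cleaner (the ratio $\area(ABD)/\area(ABCD)$ gives the answer immediately), while the paper's Part 2 construction is slightly more geometric and self-contained. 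One small point worth making explicit if you were to write this up: in the parallel case $\theta=0$, the direction $(\cos\theta,\sin\theta)=(1,0)$ indeed points from $D$ to $C$ because in a convex $ABCD$ (counterclockwise, with $A$ left of $B$ on the $x$-axis) one has $C_x>D_x$; you gesture at this but it deserves the one-line justification.
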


\begin{proof}
$\mathbf{1.}$ Let $\theta'$ be the positive angle at $A$ in $ABO$. The sought formula follows from 
\[AO=\frac{AB \cdot AD\cdot \sin(\theta_A)}{AD\cdot \sin(\theta_A-\theta')+AB \cdot \sin(\theta')}, \; \; \; AC=\frac{\sin(\theta_B)}{\sin(\theta')}BC\]
and the identity
\[ \sin(\theta_A+\theta_B)\sin(\theta')+\sin(\theta_A-\theta')\sin(\theta_B)=\sin(\theta_A)\sin(\theta_B+\theta').\]
The latter identity follows from the expansion of $\sin(a+b)$ and the second formula follows from the law of sines. For the first formula, we use the relation $$\area(ABD)=\area(AOD)+\area(ABO)$$ and compute that $\area(ABD)=\frac{1}{2}AB\cdot AD\sin(\theta_A)$, that   $\area(AOD)=\frac{1}{2}AO\cdot AD\sin(\theta_A-\theta')$ and that $\area(ABO)=\frac{1}{2}AB\cdot AO\sin(\theta')$.
%

$\mathbf{2.}$ Let $B'$ be the point on $(BD)$ such that $AB'$ is parallel to $CD$. Under our assumptions, the point $B'$ lies on $BD$. In the trapezoid $AB'CD$, we have
$\frac{AO}{AC}=\frac{AB'}{AB'+CD}$. As above, we compute that 
\[AB'=\frac{AB \cdot AD\cdot \sin(\theta_A)}{AD\cdot \sin(\theta_A-\theta)+AB \cdot \sin(\theta)}\]
from which we deduce the sought formula.
\end{proof}

We are now ready to prove Proposition \ref{prop:l}. In this context, we choose coordinates $M\simeq \Z^2$. Recall that for any $j \in \Z/n\Z$, the index $q_j$ of the affine lattice generated by $v_{j-1}$ and $v_j$ in the lattice $M\simeq \Z^2$ is given by $q_j=\det(v_{j-1}, v_j)= \vert v_{j-1} \vert \cdot \vert v_j \vert \cdot \sin(\theta)$, where $\theta$ is the positive angle between $v_{j}$ and $v_{j-1}$. In the course of the proof below, we will use the following minoration
\begin{equation} \label{eq:minoration}
\vert \Delta_{j-1}\vert \cdot \vert \Delta_j \vert \cdot \sin(\theta)=q_j \cdot \length(\Delta_{j-1}) \cdot \length(\Delta_{j}) \geqslant \length(\Delta_{j-1}) \cdot \length(\Delta_{j}).
\end{equation}
We also warn that we use the same notation $AB$ to denote the segment between two points $A, B \in \R^2$ and to denote the Euclidean  distance between them.

\begin{proof}[Proof of Proposition \ref{prop:l}]
In the context of $\mathbf{(B)}$, observe that $q_j\leqslant \max_{j\in \Z/n\Z}\left\lbrace \right\vert v_j\vert^2 \rbrace$ and in turn that $\ell \geqslant 5q$. In particular, the assumption $\mathbf{(B)}$ is satisfied.

Now, consider two wedges $\w:=\w(j,v)$ and $\w':=\w(j,v')$ as in $\mathbf{(C)}$. Label the vertices of the convex quadrilateral $\conv( \w \cup \w')$ by $A,B,C,D \in M\simeq\Z^2$ such that $AB= \Delta_j$ and such that $A,B,C,D$ satisfy the assumptions of Lemma \ref{lem:quadri}.$2$ (let us say that $C=v$). In particular, $C$ and $D$ are consecutive lattice points on some edge $\Delta_k$. In order to show that the restrictions of $\obs_{\w}$ and $\obs_{\w'}$ to $\itrz\big(T(j,v)\cap T(j,v')\big)$ are surjective, it suffices to show that $\frac{AO}{AC}>\frac{1}{2}$ and that the section of $ABO$ parallel to $AB$ and passing through the midpoint of $AC$ has Euclidean length strictly greater than $\vert v_j \vert $. 
Indeed, if we change coordinates on $M$ such that $AB$ is on the first coordinate axis (so that $\vert v_j \vert $ becomes $1$) and denote by $A'$ and $B'$ the intersection of the latter section with $AO$ and $BO$, then the trapezoid $AA'B'B$ contains at least one lattice point in the interior of each horizontal section at integer height. In these coordinates, the maps $\obs_{\w}$ and $\obs_{\w'}$ are given by $\obs_{\w}(u,v)=\dist(v,q\Z)$ and $\obs_{\w'}(u,v)=\dist(v,q'\Z)$ with $q\geqslant q'$ and the second coordinates of $A'$ and $B'$ are equal to $q/2 \geqslant q'/2$. Therefore, the restrictions of $\obs_{\w}$ and $\obs_{\w'}$ to $\itrz(ABO)$ are surjective.

Let us now show that $\frac{AO}{AC}>\frac{1}{2}$ and $A'B'> \vert v_j \vert $. We will proceed by deriving successive lower bounds for $\frac{AO}{AC}$ starting from the formula given in Lemma \ref{lem:quadri}.$2$. As a preparation, let us show that $AD \geqslant 5 \cdot \vert v_k \vert$. Indeed, if we assume that $A$ is the vertex $\Delta_{j-1,j}$ (the case $A = \Delta_{j,j+1}$ is similar), then the distance $AD$ is greater than the distance from the line $(AB)$ to the vertex $\Delta_{j-2,j-1}$. The latter distance is equal to $\det(v_j, v_{j-1}) \cdot \length(\Delta_{j-1})\cdot \vert v_j \vert^{-1}$. Since $\length(\Delta_{j-1}) \geqslant 5 \cdot \max_{i\in \Z/n\Z}\left\lbrace \vert v_i \vert^2 \right\rbrace$, the claim follows. Recall that, in the terminology of Lemma \ref{lem:quadri}.$2$, we have that $\sin(\theta) < \sin(\theta_A)$, that $CD= \vert v_k \vert$ and 
that $AB\geqslant 5 \vert v_k \vert ^2 \geqslant 5 \vert v_k \vert$.

Starting from the formula in Lemma \ref{lem:quadri}.$2$, we deduce that
\[
\begin{array}{rl}
\dfrac{AO}{AC} 	& = \dfrac{AB \cdot AD\cdot \sin(\theta_A)}{AB \cdot AD\cdot 												\sin(\theta_A)+ CD \cdot \big( AD \sin(\theta_A- \theta)+AB 												\sin(\theta)\big)}\\
\\
							& > \dfrac{AB \cdot AD\cdot \sin(\theta_A)}{AB \cdot AD\cdot 											\sin(\theta_A)+ CD \cdot \big( AD \big(\sin(\theta_A)+\sin(\theta)										\big) +AB \sin(\theta)\big)}\\
\\							
							& = \dfrac{AB\cdot AD}{AB\cdot AD+  AD\cdot CD \cdot \left( 									1+\frac{\sin(\theta)}	{\sin(\theta_A)}\right)+ AB \cdot CD \cdot 											\frac{\sin(\theta)}{\sin(\theta_A)}}\\
							\\
							& > \dfrac{AB\cdot AD}{AB\cdot AD+  2 \, AD\cdot CD \cdot+ AB 										\cdot CD} = \dfrac{1}{1+  2 \frac{\vert v_k \vert}{AB} + \frac{\vert v_k \vert}									{AD}}
							\geqslant\dfrac{1}{1+\frac{2}{5} + \frac{1}{5}}
							=\dfrac{1}{2} \cdot \dfrac{5}{4}.
\end{array}
\]
In turn, we have $A'B'= AB \cdot \big(1- \frac{AA'}{AO}\big)= AB \cdot \left(1- \frac{1}{2}\cdot\frac{AC}{AO}\right) > AB \cdot \left(1- \frac{4}{5}\right) = \frac{AB}{5} \geqslant \frac{\ell \cdot \vert v_j\vert}{5} \geqslant \vert v_j\vert$.
We conclude that $\mathbf{(C)}$ is satisfied.

Let us now consider the pair of wedges $\w_j$ and $\w_{j+1}$ in the context of $\mathbf{(A)}$. Label the vertices of the convex quadrilateral $\conv( \w_j \cup \w_{j+1})$ by $A,B,C,D \in \Z^2$ such that $AB= \Delta_j$ and such that $A,B,C,D$ satisfy the assumptions of Lemma \ref{lem:quadri}.$2$.
Define $A' \in AC$ and $B' \in BD$ such that $A'B'$ is parallel to $AB$ and $\frac{AA'}{AC}=\frac{1}
{\length(BC)}$. In order to show that the restrictions of $\obs_{\w_j}$ and $\obs_{\w_{j+1}}$ to $
\itrz(ABO)$ are at least $2$-to-$1$, it suffices to show that $\frac{AO}{AC}>\frac{1}{\length(BC)}$ 
and that $A'B'> \vert v_j \vert $. Indeed, the trapezoid $AA'B'B$ (and therefore $ABO$) will contains 
two translated parallelograms spanned by $v_{j-1}$, $v_j$ and $v_j$, $v_{j+1}$ respectively. 
Using similar arguments as above, this leads to the sought statement. 

Starting from Lemma \ref{lem:quadri}.$1$, we deduce that
\[
\begin{array}{rl}
\length(BC) \cdot \dfrac{AO}{AC} 	
		& = \dfrac{ \length(BC) \cdot AB \cdot AD\cdot \sin(\theta_A)}{AB \cdot 				AD\cdot \sin(\theta_A)+AB \cdot BC \cdot \sin(\theta_B)-AD \cdot BC\cdot 					\sin(\theta_A+	\theta_B)}\\
\\
& > \dfrac{  \length(BC) \cdot \length(AB) \cdot \length(AD)}{AB \cdot AD +AB \cdot BC \cdot +AD \cdot BC} \quad \quad \big(\text{using } \eqref{eq:minoration} \text{ on the numerator}\big)\\
\\
& \geqslant \dfrac{  \length(BC) \cdot \length(AB) \cdot \length(AD)}{\big(\length(AB) \cdot \length(AD) +\length(AB) \cdot \length(BC)  \cdot +\length(BC) \cdot \length(AD)\big) \cdot \max_{ i \in \Z/n\Z} \left\lbrace \vert v_i \vert^2  \right\rbrace}\\
\\
& = \dfrac{ 1 }{\left(\frac{1}{\length(BC)}  +\frac{1}{\length(AD)} +\frac{1}{\length(AB)}\right) \cdot \max_{ i \in \Z/n\Z} \left\lbrace \vert v_i \vert^2  \right\rbrace}
\geqslant \dfrac{1}{\frac{3}{\ell} \cdot \max_{ i \in \Z/n\Z} \left\lbrace \vert v_i \vert^2  \right\rbrace}
\geqslant \dfrac{5}{3}.
\end{array}
\]
In turn, we have $A'B= AB \cdot \big(1- \frac{AA'}{AO}\big)= AB \cdot \left(1- \frac{1}{\length(BC)}\cdot\frac{AC}{AO}\right) > AB \cdot \left(1- \frac{3}{5}\right) = \frac{2}{5} AB \geqslant \frac{2 \cdot \ell}{5} \vert v_j\vert > \vert v_j\vert$.
We conclude that $\mathbf{(A)}$ is satisfied.
\end{proof}

\section{Pairs $(X,\Li)$ with unexpected monodromy}\label{sec:counterex}

We now restrict our attention to pairs $(X,\Li)$ induced by lattice polygons $\Delta\subset \R^2$ of the following form:
the set $\Delta \cap \Z^2$ is contained on a line $L$ except for two points $p$ and $q$ lying on different sides of $L$. Assume further that the segment connecting $p$ to $q$ contains a lattice point of $L$. Using an affine linear transformation if necessary, we can assume that $L$ is the second coordinate axis of $\Z^2$ and that $(\Delta \setminus L) \cap \Z^2= \big\lbrace (-1,0), \, (1,0) \big\rbrace$.
In particular, the polygon $\Delta$ is either a quadrilateral or a triangle with at least one symmetry. We will refer to such a polygon as a \emph{kite}. 

Then, any polynomial $f \in H^0(X,\Li)$ is of the form
\[f(z,w)= \frac{\alpha}{z}+p(w)+\beta z\]
where $p(w)$ is a Laurent polynomial and $\alpha,\beta\in \C$.

In this setting, the singular points of any curve $C \in \LL$ carry a particular information. Indeed, a curve $C$ (whose very affine part $\Cb$ is defined by a Laurent polynomial $f$) has a singular point if the system 
$f=\partial_z f=\partial_w f=0$
has a solution in $\alpha$, $\beta$ and the coefficients of $p$. From the equation $\partial_z f=0$, we deduce that the coordinates $(z,w)$ of a node satisfy the relation
\[-\alpha +\beta z^2=0 \; \Leftrightarrow \; z^2=\frac{\alpha}{\beta} \; \Leftrightarrow \; \textstyle\big(z- \sqrt{\frac{\alpha}{\beta}}\big)\big(z+ \sqrt{\frac{\alpha}{\beta}}\big)=0\]
for any determination of $\sqrt{\frac{\alpha}{\beta}}$,
that is we can decorate any node of $C$ with one of the two square roots of $\frac{\alpha}{\beta}$ and this decoration is continuous in the coefficients of $f$.

\begin{Lemma}
Let $C\in \sv{\Delta}$ be a rational simple Harnack curve. Then, the above decoration defines a surjective map $\partial : \{\text{nodes of } C\}\rightarrow \big\lbrace \pm\sqrt{\alpha/\beta}\big\rbrace$ with fibers of cardinality $\lfloor k/2 \rfloor$ and $\lceil k/2 \rceil$ for $k\geqslant 2$. In particular, the image of the map $\mu_\Delta$ is imprimitive for $k\geqslant 4$.
\end{Lemma}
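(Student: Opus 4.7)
My plan is to identify the normalisation $\tilde C\cong\cp{1}$ with a degree $2$ branched cover of $\cp{1}_w$ via $(z,w)\mapsto w$, then read off the two fibre sizes of $\partial$ from a Riemann--Hurwitz count applied to the factorisation
\[p(w)^2-4\alpha\beta \;=\; \bigl(p(w)-2\sqrt{\alpha\beta}\bigr)\bigl(p(w)+2\sqrt{\alpha\beta}\bigr).\]
First I would unpack $\partial$: at a node $\nu=(z_\nu,w_\nu)$ the equation $\partial_z f(\nu)=-\alpha/z_\nu^2+\beta=0$ forces $z_\nu=\epsilon\sqrt{\alpha/\beta}$ for some $\epsilon\in\{\pm 1\}$, which is precisely $\partial(\nu)$. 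Combined with $f(\nu)=0$ and $\partial_w f(\nu)=p'(w_\nu)=0$, this shows that $w_\nu$ is a critical point of $p$ with critical value $p(w_\nu)=-(\alpha/z_\nu+\beta z_\nu)=-2\epsilon\sqrt{\alpha\beta}$. Hence $|\partial^{-1}(\epsilon\sqrt{\alpha/\beta})|$ counts the critical points of $p$ whose critical value equals $-2\epsilon\sqrt{\alpha\beta}$.

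Next, write the $y$-axis cross-section of $\Delta$ as the segment from $(0,-a)$ to $(0,b)$, so that $k=a+b-1=|\itrz(\Delta)|$ and $p$ has Laurent support in $\{-a,\dots,b\}$. Let $s_\pm$ and $n_\pm$ denote the numbers of simple and double roots in $\C^\ast$ of $p(w)\mp 2\sqrt{\alpha\beta}$; for generic parameters the extreme coefficients of these Laurent polynomials are nonzero, so each has $a+b$ roots in $\C^\ast$ counted with multiplicity, giving $s_\pm+2n_\pm=a+b$. The double roots are exactly the critical points of $p$ with the relevant critical value, i.e.\ the nodes of $C$. The projection $C\to\cp{1}_w$ coming from $\beta z^2+p(w)z+\alpha=0$ is a degree $2$ cover, and a short local analysis shows it is unramified above $w=0$ and $w=\infty$: either $p$ has a pole at such a point (yielding preimages with $z\to 0$ and $z\to\infty$), or $p$ is regular there and the induced quadratic in $z$ has distinct roots generically. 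All ramification of $\tilde C\to\cp{1}_w$ therefore occurs at the simple roots of $p^2-4\alpha\beta$ in $\C^\ast$, and since $\tilde C\cong\cp{1}$ Riemann--Hurwitz forces $s_++s_-=2$.

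Adding the two relations gives $n_++n_-=k$, while subtracting yields $s_+-s_-=2(n_--n_+)$; together with $s_\pm\geqslant 0$ and $s_++s_-=2$ this forces $|n_+-n_-|\leqslant 1$, and a parity check then gives $\{n_+,n_-\}=\{\lfloor k/2\rfloor,\lceil k/2\rceil\}$. Surjectivity of $\partial$ for $k\geqslant 2$ follows since $\lfloor k/2\rfloor\geqslant 1$. For the imprimitivity claim, the decoration $\partial$ is continuous along loops in $\sv{\Delta}$ up to the $\Z/2$-action swapping the two square roots, so $\im(\mud)$ preserves the unordered partition $\{\partial^{-1}(+\sqrt{\alpha/\beta}),\partial^{-1}(-\sqrt{\alpha/\beta})\}$; for $k\geqslant 4$ each block has cardinality $\geqslant 2$, producing a non-trivial block system. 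The main technical subtlety will be the local verification that $w=0$ and $w=\infty$ are unramified, requiring a short case split on whether $a$ or $b$ vanishes (triangle versus quadrilateral kite); the rest is bookkeeping.
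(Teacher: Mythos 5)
Your proof is correct and takes a genuinely different route from the paper's. The paper invokes Mikhalkin's quadrant-sign lemma for Harnack curves to say that the node with $\ord(\nu)=(0,\kappa)$ sits in a quadrant of $(\R^*)^2$ whose sign pattern is determined by the parity of $\kappa$, so the two fibres of $\partial$ are exactly the odd-$\kappa$ and even-$\kappa$ nodes and the count follows immediately; the imprimitivity argument is then the same as yours. Your argument instead computes the split $n_+ + n_- = k$, $|n_+ - n_-| \leqslant 1$ from Riemann--Hurwitz applied to the degree-$2$ projection $\tilde C\to\cp{1}_w$. This is more elementary (it uses only that $C$ is rational and nodal), and in fact it establishes the fibre count for \emph{any} $C\in\sv{\Delta}$, not just simple Harnack curves, which is slightly stronger than what the paper proves. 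The trade-off is that the paper's argument is shorter once one grants the quadrant lemma, whereas yours requires the bookkeeping at $w=0,\infty$.

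One point to tighten: you assert that $s_\pm + 2n_\pm = a+b$ "for generic parameters", but the lemma is stated for an arbitrary rational simple Harnack curve, and such a curve may be tangent to a toric divisor (for instance $p(0)=\pm 2\sqrt{\alpha\beta}$ in the triangle case $a=0$, which corresponds to two parameters $a_{j,l}$ on the bottom edge coinciding in the parametrisation of Proposition~\ref{prop:ratharnack}). When this happens the corresponding root of $p \mp 2\sqrt{\alpha\beta}$ leaves $\C^*$ and simultaneously the projection to $\cp{1}_w$ acquires a ramification point over $w=0$ or $w=\infty$, so $s_+ + s_- < 2$. The two discrepancies cancel in the sum and the conclusions $n_+ + n_- = k$ and $|n_+ - n_-|\leqslant 1$ survive, but this should be checked explicitly rather than dismissed by genericity, since the lemma needs the fibre sizes to hold for the chosen reference curve and then to remain constant along loops in $\sv{\Delta}$ for the block-system argument.
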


Before proving this result, we would like to mention that the nature of the obstruction $\partial$ was discovered during a conversation with I. Tyomkin to whom the author is particularly grateful.

\begin{proof}
It follows from \cite[Lemma 11]{Mikh} that the node $\nu\in C$ such that $\ord(\nu)=(0,\kappa)$ sits in the quadrant of $(\R^*)^2$ of sign $\big(+1,(-1)^\kappa)\big)$, up to an appropriate sign change of the coordinates. Therefore, the fibers of $\partial$ are in bijective correspondence with the integers $1\leqslant \kappa \leqslant k$ with a given reduction modulo $2$. This proves the first part of the statement. For the second part, observe that $\pi_1(\sv{\Delta})$ acts on $\big\lbrace \pm\sqrt{\alpha/\beta}\big\rbrace$ by permutations and it is clear from the parametrisation \eqref{eq:param} that this action is surjective. In particular, for any $\gamma \in \pi_1(\sv{\Delta})$, the permutation $\mu_\Delta(\gamma)$ preserves the fibers of $\partial$ if the corresponding permutation of the roots of $\frac{\alpha}{\beta}$ is the identity and exchanges the fibers otherwise. In other words, the fibers of $\partial$ are blocks for the action of $\mu_\Delta$. This blocks are non-trivial provided that $k\geqslant 4$.
\end{proof}

\begin{Remark}
$\mathbf{1}.$
We can consider a kite $\Delta$ such that $\partial \Delta \cap \Z^2$ generates the full lattice. In such case, the obstruction map $\obs_{\partial \Delta}$ is trivial. However, as we have seen above, the map $\mu_{\Delta}$ is not surjective in general. In particular, it justifies the necessity of the constant $\ell$ in Theorem \ref{thm:main}.\\
$\mathbf{2}.$ In \cite{LT}, we show in particular that the Severi varieties of pairs $(X,\Li)$ obtained from kites are reducible in general.
\end{Remark}

\bibliographystyle{alpha}
\bibliography{Draft}

\vspace{1cm}
\noindent
L. Lang\\
Department of Mathematics, Stockholm University, SE - 106 91 Stockholm, Sweden.\\
\textit{Email}: lang@math.su.se

\end{document}